\definecolor{dgreen}{rgb}{0,.8,.3}
\definecolor{lblue}{rgb}{.2,.3,.7}
\newtheorem{assumption}{Assumption}
\newtheorem{theorem}{Theorem}
\newtheorem{definition}{Definition}
\newtheorem{lemma}{Lemma}
\newtheorem{proposition}{Proposition}
\newtheorem{corollary}{Corollary}
\numberwithin{equation}{section}
\numberwithin{lemma}{section}
\numberwithin{theorem}{section}
\newcommand{\beq}{\begin{equation}}
\newcommand{\eeq}{\end{equation}}
\newcommand{\beqa}{\begin{eqnarray}}
\newcommand{\eeqa}{\end{eqnarray}}
\newcommand{\beqas}{\begin{eqnarray*}}
\newcommand{\eeqas}{\end{eqnarray*}}
\newcommand{\ba}{\begin{array}}
\newcommand{\ea}{\end{array}}
\newcommand{\bi}{\begin{itemize}}
\newcommand{\ei}{\end{itemize}}
\newcommand{\gap}{\hspace*{1em}}
\newcommand{\nn}{\nonumber}
\def\Arg{{\rm Arg}}
\def\be{{\bf e}}
\def\bt{{\bar t}}
\def\bx{{\bar x}}
\def\c{{\rm c}}
\def\cB{{\cal B}}
\def\cC{{\cal C}}
\def\cFr{{\cC_s \cap \Omega}}
\def\cK{{\cal K}}
\def\cN{{\cal N}}
\def\cO{{\cal O}}
\def\cp{{\cal P}}
\def\cQ{{\cal Q}}
\def\cS{{\cal S}}
\def\cT{{\cal T}}
\def\cX{{\cal X}}
\def\cs{{\it ChangeSupport}}
\def\cswap{{\it SwapCoordinate}}
\def\hcK{{\hat \cK}}
\def\hatt{{\hat t}}
\def\hx{{\hat x}}
\def\Mod{{\rm mod}}
\def\proj{{\rm Proj}}
\def\q{{q}}
\def\sign{{\rm sign}}
\def\supp{{\rm supp}}
\def\tcK{{\tilde \cK}}
\def\tomega{{\widetilde\Omega}}
\def\tsigma{{\tilde \Sigma}}
\def\tx{{\tilde x}}
\def\T{{\textsc{\bf T}}}
\title{Optimization over Sparse Symmetric Sets via a Nonmonotone Projected Gradient Method}
\author{
 Zhaosong Lu
 \thanks{Department of Mathematics, Simon Fraser University, Canada 
(Email: {\tt zhaosong@sfu.ca}). This author was supported in part by NSERC Discovery Grant.}
}
\date{November 21, 2015}
\begin{document}

\maketitle

\begin{abstract}

We consider the problem of minimizing a Lipschitz differentiable function over a class of sparse 
symmetric sets that has wide applications in engineering and science. For this problem, it is known that 
any accumulation point of  the classical projected gradient (PG) method with a constant stepsize $1/L$ satisfies the $L$-stationarity optimality condition that was introduced in \cite{BeHa14}.  In this paper we introduce a new optimality condition that is stronger than the $L$-stationarity optimality condition. We also propose a nonmonotone projected gradient (NPG) method for this problem by  incorporating some support-changing and coordinate-swapping strategies into a projected gradient method with variable stepsizes. It is shown that any accumulation point of NPG satisfies the new optimality condition and moreover it is a coordinatewise 
stationary point. Under some suitable assumptions, we further show that it is a {\it global} or a {\it local} minimizer of 
the problem. Numerical experiments are conducted to compare the performance of PG and NPG. The 
computational results demonstrate that NPG has substantially better solution quality than PG, and moreover,  
it is at least comparable to, but sometimes can be much faster than PG in terms of speed.

\vskip14pt
 \noindent {\bf Keywords}: cardinality constraint, sparse optimization, sparse projection, nonmonotone projected 
gradient method.
\end{abstract}

\section{Introduction}

Over the last decade sparse solutions have been concerned in numerous applications. For example, in 
compressed sensing, a large sparse signal is decoded by using a sampling matrix and a relatively 
low-dimensional measurement vector, which is typically formulated as minimizing a least squares 
function subject to a cardinality constraint (see, for example, the comprehensive reviews 
\cite{TrWr10,DaDuElKu11}).  As another example, in financial industry portfolio managers 
often face business-driven requirements that limit the number of constituents in their tracking 
portfolio. A natural model for this is to minimize a risk-adjusted return over a 
cardinality-constrained simplex, which  has recently been considered in \cite{TaNiGoKa12,KyBeCeKo13, XuLuXu15, BeHa14} 
for finding a sparse index tracking. These models can be viewed as a special case of the following 
general cardinality-constrained optimization problem:
\beq \label{sparse-prob}
f^* := \min\limits_{x\in\cFr} f(x),
\eeq
where $\Omega$ is a closed convex set in $\Re^n$ and 
\[
\cC_s = \{x\in\Re^n: \|x\|_0 \le s\}
\]
for some $s\in\{1,\ldots,n-1\}$. Here, $\|x\|_0$ denotes the cardinality or the number of nonzero elements of $x$, and 
$f:\Re^n \to \Re$ is Lipschitz continuously differentiable, that is,  there is a constant $L_f > 0$ such that
\beq \label{lipschitz}
\left\| {\nabla f(x) - \nabla f(y)} \right\| \le L_f\left\| {x - y} \right\| \quad \forall x, y \in \Re^n.
\eeq

Problem \eqref{sparse-prob} is generally NP-hard. One popular approach to finding an approximate 
solution of \eqref{sparse-prob} is by convex relaxation. For example, one can replace the associated 
$\|\cdot\|_0$  of \eqref{sparse-prob} by $\|\cdot\|_1$ and the resulting problem has a convex feasible region. Some 
well-known models in compressed sensing or sparse linear regression such as lasso \cite{Ti96}, basis pursuit 
\cite{ChDoSa98}, LP decoding \cite{CanTao05} 
 and the Dantzig selector \cite{CanTao07}  
were developed in this spirit. In addition, direct approaches have been proposed in the literature for solving some 
special cases of \eqref{sparse-prob}. For example, IHT \cite{BlDa08,BlDa09} and CoSaMP \cite{NeTr09} are two direct 
methods for solving problem \eqref{sparse-prob} with $f$ being a least squares function and $\Omega=\Re^n$. Besides, 
the IHT method was extended and analyzed in \cite{Lu14} for solving 
$\ell_0$-regularized convex cone programming. The gradient 
support pursuit (GraSP) method was proposed in \cite{BaRaBo13} for solving \eqref{sparse-prob} 
with a general $f$ and $\Omega=\Re^n$. A penalty decomposition method was introduced and 
studied in \cite{LuZh13} for solving \eqref{sparse-prob} with general $f$ and $\Omega$.

Recently, Beck and Hallak \cite{BeHa14} considered problem \eqref{sparse-prob} in which $\Omega$ 
is assumed to be a symmetric set.  They introduced three types of optimality conditions 
that are basic feasibility, $L$-stationarity and coordinatewise optimality, and established a hierarchy 
among them. They also proposed methods for generating points satisfying these optimality conditions. 
Their methods require finding an {\it exact} solution of a sequence of subproblems in the form of 
\[
\min \{f(x): x \in \Omega, \ x_i =0 \  \ \forall i \in I \}
\]
for some index set $I \subseteq \{1,\ldots,n\}$. Such a requirement is, however, 
generally hard to meet unless $f$ and $\Omega$ are both sufficiently simple. This 
motivates us to develop a method suitable for solving problem \eqref{sparse-prob} 
with more general $f$ and $\Omega$. 

As studied in \cite{BeHa14}, the orthogonal projection of a point onto $\cFr$ can be 
efficiently computed for some symmetric closed convex sets $\Omega$. It is thus suitable to apply the projected 
gradient (PG) method with a fixed stepsize $t \in(0,1/L_f)$ to solve \eqref{sparse-prob} with such $\Omega$. 
It is easily known that any accumulation point $x^*$ of the sequence generated by PG satisfies 
\beq \label{nonconvex-fixpt}
x^* \in \Arg\min\left\{\|x-(x^*-t\nabla f(x^*))\|: x\in \cFr\right\}.\footnote{By convention, the symbol $\Arg$ stands for the set of the solutions of the associated 
optimization problem. When this set is known to be a singleton, we use the symbol $\arg$ to stand for it instead.} 
\eeq
It is noteworthy that this type of convergence result is weaker than that of the PG method applied to 
the problem 
\[
\min \{f(x): x\in\cX\}, 
\]  
where $\cX$ is a closed convex set in $\Re^n$. For this problem, it is known that 
any accumulation point $x^*$ of the sequence generated by PG with a fixed stepsize 
$t \in(0,1/L_f)$ satisfies 
\beq \label{convex-fixpt}
x^* = \arg\min\left\{\|x-(x^*-t\nabla f(x^*))\|: x\in \cX\right\}. 
\eeq  
The uniqueness of solution to the optimization problem involved in \eqref{convex-fixpt} is due to the convexity 
of $\cX$. Given that $\cFr$ is generally nonconvex, the solution to the optimization problem involved in 
\eqref{nonconvex-fixpt} may not be unique. As shown in later section of this paper, if it has a distinct 
solution $\tx^*$,   that is,   
\[
x^* \neq \tx^* \in \Arg\min\left\{\|x-(x^*-t\nabla f(x^*))\|: x\in \cFr\right\},
\]
then $f(\tx^*)<f(x^*)$ and thus $x^*$ is certainly not an optimal solution of 
\eqref{sparse-prob}. Therefore, a convergence result such as 
\beq \label{nonconvex-fixpt1}
x^* = \arg\min\left\{\|x-(x^*-t\nabla f(x^*))\|: x\in \cFr\right\} 
\eeq
is generally stronger than \eqref{nonconvex-fixpt}. 

In this paper we first study some properties of the orthogonal projection of a point onto $\cFr$ and propose a 
new optimality condition for problem \eqref{sparse-prob}. We then propose a nonmonotone projected gradient 
(NPG) method \footnote{As mentioned in the literature (see, for example, \cite{FeLuRo96,GrLu86,
ZhHa04}), nonmonotone type of methods often produce solutions of better quality than the monotone 
counterparts for nonconvex optimization problems, which motivates us to propose a  
nonmonotone type method in this paper.} for solving problem \eqref{sparse-prob}, which incorporates some  
support-changing and coordinate-swapping strategies into a PG method with variable stepsizes. It is shown that 
any accumulation point $x^*$  of the sequence generated by NPG satisfies \eqref{nonconvex-fixpt1} for all 
$t\in [0,\T]$ for some $\T\in (0,1/L_f)$. Under some suitable assumptions, we further show 
that $x^*$ is a  coordinatewise stationary point. Furthermore, if $\|x^*\|_0<s$,  
then $x^*$ is a {\it global} optimal solution of \eqref{sparse-prob}, and it is a local optimal 
solution otherwise. We also conduct numerical experiments to compare the performance of NPG and the PG method with a fixed
 stepsize. The 
computational results demonstrate that NPG has substantially better solution quality than PG, and moreover,  
it is at least comparable to, but sometimes can be much faster than PG in terms of speed.  

The rest of the paper is organized as follows. In section \ref{proj-set}, we study some properties of the orthogonal 
projection of a point onto $\cFr$.  In section \ref{opt-condns}, we propose a new optimality condition for problem 
\eqref{sparse-prob}.  In section \ref{method} we propose an NPG method for solving problem \eqref{sparse-prob} 
and establish its convergence.  We conduct numerical experiments in section \ref{results} to compare the performance 
of the NPG and PG methods. Finally, we present some concluding remarks in section \ref{conclude}.

\subsection{Notation and terminology} \label{notation}


For a real number $a$, $a_+$ denotes the nonnegative part of $a$, that is, $a_+ = \max\{a,0\}$. The symbol $\Re^n_+$ denotes the nonnegative orthant of $\Re^n$. Given any $x\in \Re^n$, $\|x\|$ is the Euclidean 
norm of $x$ and $|x|$ denotes the absolute value of $x$, that is, $|x|_i = |x_i|$ for all $i$. 
In addition, $\|x\|_0$ denotes the number of nonzero entries of $x$. The 
 support set of $x$ is defined as $\supp(x) = \{i: x_i \neq 0\}$. Given an index set $T \subseteq \{1,\ldots, n\}$, $x_T$ denotes the sub-vector 
of $x$ indexed by $T$, $|T|$ denotes the cardinality of $T$, and $T^\c$ is the complement 
of $T$ in $\{1,\ldots, n\}$.  
For a set $\Omega$, we define
$\Omega_T =  \{x\in\Re^{|T|}: \sum_{i\in T} x_i \be_i \in \Omega\}$,
where $\be_i$ is the $i$th coordinate vector of $\Re^n$. 

Let $s\in\{1,\ldots,n-1\}$ be given. Given any $x\in\Re^n$ with $\|x\|_0 \le s$, the index set $T$ is called a 
$s$-super support of $x$ if $T \subseteq \{1,\ldots, n\}$ satisfies $\supp(x) \subseteq T$ and $|T| \le s$.  
The set of all s-super supports of $x$  is denoted by 
$\overline\cT_s(x)$, that is, 
\[
\overline\cT_s(x) = \left\{T \subseteq \{1,\ldots,n\}: \supp(x) \subseteq T \ \mbox{and} \ |T| \le s \right\}.
\]
In addition, $T\in\overline\cT_s(x)$ is called a $s$-super support of $x$ with cardinality $s$ if $|T|=s$. 
The set of all such $s$-super supports of $x$ is denoted by $\cT_s(x)$, that is, 
$\cT_s(x) = \{T \in \overline\cT_s(x): |T|=s\}$.

The sign operator $\sign: \Re^n \to \{-1,1\}^n$ 
is defined as 
\[
(\sign(x))_i = \left\{
\ba{ll}
1 &   \mbox{if} \ x_i \ge 0, \\ 
-1 & \mbox{otherwise} 
\ea \right. \quad \forall i=1,\ldots, n.
\]
The Hadmard product of any two vectors $x,y\in\Re^n$ is denoted by $x \circ y$, that is, $(x \circ y)_i = x_iy_i$ 
for $i=1,\ldots,n$. Given a closed set  $\cX \subseteq \Re^n$, the Euclidean projection of 
$x\in\Re^n$ onto $\cX$ is defined as 
the set 
\[
\proj_\cX(x) = \Arg\min\{\|y-x\|^2: y \in \cX\}.  
\]   
If $\cX$ is additionally convex, $\proj_\cX(x)$ reduces to a singleton, which is 
treated as a point by convention.  Also, the normal cone 
of $\cX$ at any $x \in \cX$ is denoted by $\cN_{\cX}(x)$.

The permutation group of the set of indices $\{1,\ldots, n\}$ is denoted by $\Sigma_n$. For any $x\in\Re^n$ and 
$\sigma \in \Sigma_n$, the vector $x^\sigma$ resulted from $\sigma$ operating on $x$ is defined as 
\[
(x^\sigma)_i = x_{\sigma(i)} \quad\quad \forall i=1,\ldots, n.
\]
Given any $x\in\Re^n$, a permutation that sorts the elements of $x$ in a non-ascending order is called a 
sorting permutation for $x$. The set of all sorting permutations for $x$ is denoted by $\tsigma(x)$. It is clear to 
see that $\sigma \in \tsigma(x)$ if and only if 
\[
x_{\sigma(1)} \ge x_{\sigma(2)} \ge \cdots \ge x_{\sigma(n-1)} \ge x_{\sigma(n)}.
\]
Given any $s \in \{1,\ldots,n-1\}$ and $\sigma \in \Sigma_n$, we define 
\[
\cS^\sigma_{[1,s]} = \{\sigma(1),\ldots, \sigma(s)\}.
\]  
A set $\cX\subseteq \Re^n$ is called a {\it symmetric set} if $x^\sigma \in \cX$ for all $x\in \cX$ and 
$\sigma\in\Sigma_n$. In addition,  $\cX$ is referred to as a {\it nonnegative symmetric set} if $\cX$ 
is a symmetric set and moreover $x \ge 0$ for all $x\in \cX$. $\cX$ is called a {\it sign-free symmetric set} 
if $\cX$ is a symmetric set and $x \circ y \in \cX$ for all $x\in \cX$ and $y\in\{-1,1\}^n$.  


\section{Projection over some sparse symmetric sets}
\label{proj-set}

In this section we study some useful properties of the orthogonal projection of a point onto the set $\cFr$, 
where $s\in\{1,\ldots,n-1\}$ is given. Throughout this paper, we make the following assumption regarding $\Omega$.

\begin{assumption} \label{assump-omega}
$\Omega$ is either a nonnegative or a sign-free symmetric closed convex set in $\Re^n$.
\end{assumption}

Let $\cp:\Re^n \to \Re^n$ be an operator associated with $\Omega$ that is 
defined as follows:
\beq \label{px}
\cp(x) = \left\{
\ba{ll}
x & \mbox{if}  \ \Omega \ \mbox{is nonnegative symmetric}, \\ 
|x| &  \mbox{if} \  \Omega\  \mbox{is sign-free symmetric}.
\ea\right.
\eeq 
One can observe that $\cp(x) \ge 0$ for all $x\in \Omega$. Moreover, $(\cp(x))_i=0$ for some $x\in\Re^n$ 
and $i\in\{1,\ldots,n\}$ if and only if $x_i=0$.

The following two lemmas were established in \cite{BeHa14}. The first one  
presents a monotone property of the orthogonal projection associated with a  
symmetric set. The second one provides a characterization of  the orthogonal projection associated with a sign-free symmetric set.

\begin{lemma}[Lemma 3.1 of \cite{BeHa14}]\label{monotone}
Let $\cX$ be a closed symmetric set in $\Re^n$. Let $x\in\Re^n$ and $y\in\proj_\cX(x)$. Then 
$(y_i-y_j) (x_i-x_j) \ge 0$  for all $i,j \in\{1,2,\ldots,n\}$.
\end{lemma}

\begin{lemma}[Lemma 3.3 of \cite{BeHa14}] \label{relation-projs}
Let $\cX$ be a closed sign-free symmetric set in $\Re^n$. Then $y \in \proj_\cX(x)$ if and only if 
$
\sign(x) \circ y  \in \proj_{\cX \cap \Re^n_+}(|x|).
$
\end{lemma}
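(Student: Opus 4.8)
The plan is to reduce the projection onto the sign-free symmetric set $\cX$ to a projection onto the nonnegative set $\cX\cap\Re^n_+$ by exploiting two features of $\cX$: its invariance under coordinatewise sign flips, and the fact that Hadamard multiplication by $\sign(x)$ acts as an isometry that straightens out the signs of $x$.

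First I would record the elementary identity that drives everything. For any $z\in\Re^n$, set $w=\sign(x)\circ z$. Since $(\sign(x))_i\in\{-1,1\}$ we have $(\sign(x))_i^2=1$, and since $x_i=(\sign(x))_i\,|x_i|$ for every $i$, a termwise computation gives $(z_i-x_i)^2=(w_i-|x_i|)^2$, hence $\|z-x\|=\|\,\sign(x)\circ z-|x|\,\|$. Thus $z\mapsto\sign(x)\circ z$ is an isometry carrying the objective $\|z-x\|$ to $\|w-|x|\|$. Because $\cX$ is sign-free and $\sign(x)\in\{-1,1\}^n$, this map is moreover an involution sending $\cX$ bijectively onto itself. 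Combining the two facts, $z\in\proj_\cX(x)$ if and only if $\sign(x)\circ z\in\proj_\cX(|x|)$, which already reduces the lemma to comparing $\proj_\cX(|x|)$ with $\proj_{\cX\cap\Re^n_+}(|x|)$ for the nonnegative vector $|x|$.

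Next I would show that projecting a nonnegative vector onto $\cX$ coincides with projecting it onto $\cX\cap\Re^n_+$. Given any $z\in\cX$, its coordinatewise absolute value satisfies $|z|=z\circ\sign(z)\in\cX$ by sign-freeness, and $|z|\in\Re^n_+$, so $|z|\in\cX\cap\Re^n_+$; moreover, since $|x|\ge0$, one checks termwise that $(z_i-|x_i|)^2-\big(|z_i|-|x_i|\big)^2=2|x_i|\big(|z_i|-z_i\big)\ge0$, whence $\||z|-|x|\|\le\|z-|x|\|$. This forces the two minimal values to coincide and yields $\proj_{\cX\cap\Re^n_+}(|x|)=\proj_\cX(|x|)\cap\Re^n_+$. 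Feeding this into the equivalence of the previous step gives the stated characterization, provided the correctly signed point $\sign(x)\circ y$ is nonnegative.

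That nonnegativity is the step I expect to be the main obstacle. At a coordinate $i$ with $x_i\neq0$ it is forced: if $w_i=(\sign(x)\circ y)_i$ were negative, the gap $2|x_i|\big(|w_i|-w_i\big)$ would be strictly positive, and since $|w|\in\cX$ by sign-freeness this would contradict optimality of $w$ for $\|\cdot-|x|\|$ over $\cX$. At a coordinate with $x_i=0$, however, both $|x_i|=0$ and the distance are insensitive to the sign of $w_i$, so sign-freeness leaves that sign genuinely undetermined; the resolution is that flipping such a coordinate preserves both membership in $\cX$ and the objective value, so a nonnegative representative of $\proj_\cX(|x|)$ always exists and the correspondence with $\proj_{\cX\cap\Re^n_+}(|x|)$ runs precisely through this choice of signs on the support of $x$. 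I would make this bookkeeping explicit and then read off both directions of the equivalence; Lemma \ref{monotone} may additionally be invoked to pin down the ordering of the projected coordinates if needed.
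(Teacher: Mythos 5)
The first thing to note is that the paper contains no proof of this lemma to compare against: it is imported verbatim from \cite{BeHa14}, so your attempt can only be measured against the statement itself. Your first two reductions are correct and are the natural route: the termwise identity $(z_i-x_i)^2=\bigl((\sign(x)\circ z)_i-|x_i|\bigr)^2$ together with sign-freeness makes $z\mapsto\sign(x)\circ z$ a distance-preserving involution of $\cX$, so $y\in\proj_\cX(x)$ iff $\sign(x)\circ y\in\proj_\cX(|x|)$; and the termwise gap $(z_i-|x_i|)^2-(|z_i|-|x_i|)^2=2|x_i|\bigl(|z_i|-z_i\bigr)\ge 0$ gives $\proj_{\cX\cap\Re^n_+}(|x|)=\proj_\cX(|x|)\cap\Re^n_+$. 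This already yields the ``if'' direction in full, and your strict-improvement argument correctly forces $(\sign(x)\circ y)_i\ge 0$ at every $i\in\supp(x)$.

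The genuine gap is exactly at the spot you flagged, and no bookkeeping can close it: the ``only if'' direction, as transcribed here (with $(\sign(x))_i=1$ when $x_i\ge 0$), is false off the support of $x$. Take $\cX=\{-1,1\}^2$, which is closed, symmetric and sign-free, and $x=(1,0)$. Then $\proj_\cX(x)=\{(1,1),(1,-1)\}$, but for $y=(1,-1)$ one has $\sign(x)\circ y=(1,-1)\notin\Re^2_+$, whereas $\proj_{\cX\cap\Re^2_+}(|x|)=\{(1,1)\}$; switching the sign convention at $0$ merely transfers the failure to $y=(1,1)$. Your proposed ``resolution'' --- that a nonnegative representative of $\proj_\cX(|x|)$ always exists --- proves only the inclusion $\sign(x)\circ\proj_{\cX\cap\Re^n_+}(|x|)\subseteq\proj_\cX(x)$, not the stated biconditional, and the biconditional is simply not true in this form. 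What your computations \emph{do} prove is the corrected statement: $y\in\proj_\cX(x)$ if and only if $|y|\in\proj_{\cX\cap\Re^n_+}(|x|)$ and $x\circ y\ge 0$ (your support argument gives $x\circ y\ge 0$, and replacing $\sign(x)\circ y$ by its absolute value gives the first condition; the converse follows since $x_iy_i\ge 0$ forces $x_iy_i=|x_i|\,|y_i|$, whence $\|y-x\|=\bigl\||y|-|x|\bigr\|$). Reassuringly, this corrected form is all the present paper ever uses: the proof of Lemma \ref{monotone-1} immediately takes absolute values to reach $|y|\in\proj_{\Omega\cap\Re^n_+}(|x|)$, and the proof of Theorem \ref{unique-soln} invokes only $y\circ a\ge 0$. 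So your instinct about where the difficulty lives was right; the defect is in the statement as quoted, and your argument, once restated for the corrected claim, is complete.
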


We next establish a monotone property for the orthogonal projection operator associated with $\Omega$.

\begin{lemma} \label{monotone-1} 
Let $\cp$ be the associated operator of $\Omega$ 
defined in \eqref{px}. 
Then for every $x\in\Re^n$ and $y\in\proj_\Omega(x)$, there holds
\beq \label{p-monotone}
\left[(\cp(y))_i-(\cp(y))_j\right] \left[(\cp(x))_i-(\cp(x))_j\right] \ge 0 \quad\quad \forall i,j \in\{1,2,\ldots,n\}.
\eeq
\end{lemma}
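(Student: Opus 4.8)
The plan is to split the argument according to the two alternatives in Assumption \ref{assump-omega} and to reduce each case to the already-available Lemma \ref{monotone} of \cite{BeHa14}.

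If $\Omega$ is nonnegative symmetric, then by \eqref{px} the operator $\cp$ is the identity, so $\cp(x)=x$ and $\cp(y)=y$, and the desired inequality \eqref{p-monotone} reads $(y_i-y_j)(x_i-x_j)\ge 0$ for all $i,j$. Since $\Omega$ is a closed symmetric set and $y\in\proj_\Omega(x)$, this is exactly the conclusion of Lemma \ref{monotone} applied with $\cX=\Omega$, so nothing further is required in this case.

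The substantive case is when $\Omega$ is sign-free symmetric, where $\cp(x)=|x|$ and $\cp(y)=|y|$, so I must establish $(|y|_i-|y|_j)(|x|_i-|x|_j)\ge 0$. Here I would first invoke Lemma \ref{relation-projs}: from $y\in\proj_\Omega(x)$ it follows that $z:=\sign(x)\circ y$ belongs to $\proj_{\Omega\cap\Re^n_+}(|x|)$. The key observation is that $\Omega\cap\Re^n_+$ is again a closed symmetric set, being the intersection of two closed symmetric sets. Hence Lemma \ref{monotone}, applied this time with $\cX=\Omega\cap\Re^n_+$ to the point $|x|$ and its projection $z$, yields $(z_i-z_j)(|x|_i-|x|_j)\ge 0$.

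It then remains to identify $z$ with $|y|$ coordinatewise. Since $z\in\Omega\cap\Re^n_+$ we have $z\ge 0$, while $z_i=\sign(x_i)\,y_i$ together with $|\sign(x_i)|=1$ gives $|z_i|=|y_i|$; combining these, $z_i=|z_i|=|y_i|=(\cp(y))_i$ for every $i$. Substituting this into the inequality from the previous step produces precisely \eqref{p-monotone}. I expect the only points demanding care to be the verification that $\Omega\cap\Re^n_+$ is symmetric and closed, and the identification $z=|y|$; once these are in place the conclusion follows immediately from \cite{BeHa14}.
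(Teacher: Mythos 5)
Your proposal is correct and follows essentially the same route as the paper: the nonnegative case is dispatched directly by Lemma \ref{monotone}, and the sign-free case is reduced via Lemma \ref{relation-projs} to a projection onto the closed symmetric set $\Omega\cap\Re^n_+$, to which Lemma \ref{monotone} is then applied. The only cosmetic difference is ordering: the paper first observes $|y|\in\proj_{\Omega\cap\Re^n_+}(|x|)$ by taking absolute values and then invokes Lemma \ref{monotone}, whereas you invoke Lemma \ref{monotone} for $z=\sign(x)\circ y$ and afterwards identify $z=|y|$ coordinatewise --- logically the same argument.
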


\begin{proof}
If $\Omega$ is a closed nonnegative symmetric set, \eqref{p-monotone} clearly holds due to  \eqref{px} and 
Lemma \ref{monotone}.  Now suppose $\Omega$ is a closed sign-free symmetric set. In view of 
Lemma \ref{relation-projs} and $y\in\proj_\Omega(x)$, one can see that 
\[
\sign(x) \circ y  \in \proj_{\Omega \cap \Re^n_+}(|x|).
\]
Taking absolute value on both sides of this relation, and using the definition of 
$\sign$, we obtain that 
\beq \label{absy}
|y|  \in \proj_{\Omega \cap \Re^n_+}(|x|).
\eeq
Observe that $\Omega \cap \Re^n_+$ is a closed nonnegative symmetric set. Using this fact, \eqref{absy} 
and Lemma \ref{monotone}, we have
\[
(|y|_i -|y|_j) (|x|_i-|x|_j) \ge 0 \quad\quad \forall i,j \in\{1,2,\ldots,n\},
\] 
which, together with \eqref{px} and the fact that $\Omega$ is sign-free symmetric, 
 implies that \eqref{p-monotone} holds.
\end{proof}

\gap

The following two lemma presents some useful properties of the orthogonal 
projection operator associated with $\cFr$. The proof the first one is similar 
to that of Lemma 4.1 of \cite{BeHa14}.

\begin{lemma} \label{proj-general} 
For every $x\in\Re^n$ and $y\in\proj_{\cFr}(x)$, there holds 
\[
y_T = \proj_{\Omega_T}(x_T) \quad\quad \forall T \in \overline\cT_s(y).
\]
\end{lemma}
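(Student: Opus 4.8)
The plan is to argue by decomposing the squared projection distance across $T$ and its complement $T^\c$, and then letting the full-space optimality of $y$ as a point of $\proj_{\cFr}(x)$ descend to optimality of the restriction $y_T$ on $\Omega_T$. The whole proof is a contradiction/competitor argument; there is no deep step, only a few feasibility checks to get right.

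First I would record the two structural facts that make the argument work. Fix $T \in \overline\cT_s(y)$, so that $\supp(y) \subseteq T$ and $|T| \le s$. Because every nonzero coordinate of $y$ lies in $T$, we have $y_{T^\c} = 0$ and $y = \sum_{i \in T} y_i \be_i$; since $y \in \Omega$, the definition of $\Omega_T$ then gives $y_T \in \Omega_T$. Moreover, $\Omega_T$ is the preimage of the convex closed set $\Omega$ under the linear embedding $z \mapsto \sum_{i \in T} z_i \be_i$, hence is itself convex and closed; in particular $\proj_{\Omega_T}(x_T)$ is a single point, so it suffices to show that $y_T$ minimizes $\|z - x_T\|$ over $z \in \Omega_T$.

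Next, for an arbitrary competitor $z \in \Omega_T$ I would lift it back to $\Re^n$ by setting $w = \sum_{i \in T} z_i \be_i$, i.e. $w_T = z$ and $w_{T^\c} = 0$. Then $w \in \Omega$ by the definition of $\Omega_T$, and $\|w\|_0 \le |T| \le s$, so $w \in \cFr$ is a feasible competitor for the projection defining $y$. The key observation is that both $y$ and $w$ vanish on $T^\c$, so the objective splits cleanly and shares a common tail:
\[
\|w - x\|^2 = \|z - x_T\|^2 + \|x_{T^\c}\|^2, \qquad \|y - x\|^2 = \|y_T - x_T\|^2 + \|x_{T^\c}\|^2.
\]

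Finally, since $y \in \proj_{\cFr}(x)$ we have $\|y - x\|^2 \le \|w - x\|^2$ for every such $w$, which after cancelling the common $\|x_{T^\c}\|^2$ yields $\|y_T - x_T\|^2 \le \|z - x_T\|^2$ for all $z \in \Omega_T$. Thus $y_T \in \proj_{\Omega_T}(x_T)$, and by convexity of $\Omega_T$ this projection is unique, giving $y_T = \proj_{\Omega_T}(x_T)$. I do not expect a genuinely hard step; the only points requiring care are verifying that the lifted competitor $w$ simultaneously meets the cardinality constraint (which uses $|T| \le s$) and lies in $\Omega$ (which uses the definition of $\Omega_T$), and noting that the split of the squared distance relies precisely on $\supp(y) \subseteq T$, so that $y$ and $w$ agree — both being zero — on $T^\c$.
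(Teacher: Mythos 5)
Your proof is correct and follows essentially the same route as the paper, which proves this lemma by the argument of Lemma 4.1 of \cite{BeHa14}: restrict to the super support $T$, lift an arbitrary competitor $z\in\Omega_T$ to a feasible point of $\cC_s\cap\Omega$ using $|T|\le s$, and cancel the common tail $\|x_{T^\c}\|^2$ in the split of the squared distance, which is valid precisely because $\supp(y)\subseteq T$ forces $y_{T^\c}=0$. Your feasibility checks and the appeal to convexity and closedness of $\Omega_T$ (guaranteeing the projection is a singleton, consistent with the paper's convention) are exactly the points that need care, and you handle them correctly.
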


\begin{lemma}[Theorem 4.4 of \cite{BeHa14}] \label{pos-proj}
 Let $\cp$ be the associated operator of $\Omega$ 
defined in \eqref{px}. Then for every $x\in\Re^n$ and $\sigma \in \tsigma(\cp(x))$,  there exists 
$y\in\proj_{\cC_s \cap \Omega}(x)$ such that $\cS^\sigma_{[1,s]} \in \cT_s(y)$, that is, $\cS^\sigma_{[1,s]}$
 is a $s$-super support of $y$ with cardinality $s$.
\end{lemma}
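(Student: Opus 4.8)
The plan is to reduce the general statement to the case where $\Omega$ is nonnegative symmetric, and then run a coordinate-exchange argument driven by the monotone property of Lemma~\ref{monotone}. First I would record two structural facts. Since $\cC_s$ and $\Omega$ are both symmetric (and both sign-free symmetric in the sign-free case), the feasible set $\cFr = \cC_s \cap \Omega$ is itself symmetric, and sign-free symmetric in the sign-free case; moreover, in that case $\cFr \cap \Re^n_+ = \cC_s \cap (\Omega \cap \Re^n_+)$, where $\Omega \cap \Re^n_+$ is nonnegative symmetric. The second fact lets me invoke Lemma~\ref{relation-projs} with $\cX = \cFr$: a point $y$ lies in $\proj_{\cFr}(x)$ if and only if $\sign(x)\circ y \in \proj_{\cFr \cap \Re^n_+}(|x|)$. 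Hence, if I can produce $z \in \proj_{\cFr \cap \Re^n_+}(|x|)$ with $\supp(z) \subseteq \cS^\sigma_{[1,s]}$, then $y := \sign(x)\circ z$ lies in $\proj_{\cFr}(x)$ and shares the support of $z$, which settles the sign-free case. Since $\cp(x) = |x|$ there and $\sigma \in \tsigma(|x|)$, producing such a $z$ is exactly the nonnegative statement applied to $\Omega \cap \Re^n_+$ and the point $|x|$. It therefore suffices to prove the lemma when $\Omega$ is nonnegative symmetric.

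In the nonnegative case $\cp(x) = x$, so $\sigma$ sorts $x$ non-ascendingly and $\cS^\sigma_{[1,s]}$ indexes the $s$ largest entries of $x$. I would start from an arbitrary $\hat y \in \proj_{\cFr}(x)$ and iteratively push its support into $\cS^\sigma_{[1,s]}$. Suppose $\supp(\hat y) \not\subseteq \cS^\sigma_{[1,s]}$. Since $|\supp(\hat y)| \le s = |\cS^\sigma_{[1,s]}|$, a counting argument yields indices $j \in \supp(\hat y) \setminus \cS^\sigma_{[1,s]}$ and $i \in \cS^\sigma_{[1,s]} \setminus \supp(\hat y)$; the ordering induced by $\sigma$ gives $x_i \ge x_j$, while $\hat y_i = 0$ and $\hat y_j > 0$. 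Because $\cFr$ is symmetric, Lemma~\ref{monotone} applies to $\hat y \in \proj_{\cFr}(x)$ and yields $(\hat y_i - \hat y_j)(x_i - x_j) \ge 0$; the first factor is strictly negative, forcing $x_i = x_j$. I would then swap the $i$th and $j$th coordinates of $\hat y$ to form $y'$: symmetry of $\cFr$ gives $y' \in \cFr$, and $x_i = x_j$ makes $\|y' - x\| = \|\hat y - x\|$, so $y' \in \proj_{\cFr}(x)$. By construction $\supp(y') = (\supp(\hat y) \setminus \{j\}) \cup \{i\}$, which strictly lowers the count of support indices outside $\cS^\sigma_{[1,s]}$. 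Iterating at most $s$ times produces a projection $y$ with $\supp(y) \subseteq \cS^\sigma_{[1,s]}$; since $|\cS^\sigma_{[1,s]}| = s$, this means $\cS^\sigma_{[1,s]} \in \cT_s(y)$, as required.

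The main obstacle is the tie case $x_i = x_j$, in which Lemma~\ref{monotone} alone cannot pin down the support: the monotone inequality only certifies that a support index lying outside $\cS^\sigma_{[1,s]}$ must be tied in value with some missing top-$s$ index, not that such an index cannot occur. The resolution is to exploit symmetry of $\cFr$ a second time, using the coordinate swap to relocate the support rather than trying to rule the situation out; it is precisely the tie that makes the swap cost-free and therefore keeps $y'$ a projection. In writing up the details I would be careful to justify the counting step --- that a missing top index $i$ exists whenever an out-of-top support index $j$ does --- and to confirm that the transposition removes $j$ from and adds $i$ to the support, guaranteeing termination after finitely many swaps.
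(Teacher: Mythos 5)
Your proof is correct. Note that the paper itself does not prove this lemma at all---it is imported verbatim as Theorem 4.4 of \cite{BeHa14}---and your reconstruction (reducing the sign-free case to the nonnegative one via Lemma \ref{relation-projs}, then pushing the support of an arbitrary projection into $\cS^\sigma_{[1,s]}$ by the counting argument, the forced tie $x_i=x_j$ from Lemma \ref{monotone}, and the cost-free coordinate swap) follows essentially the same route as the original proof in that reference, with the key steps (strict negativity of $\hat y_i-\hat y_j$ using $\hat y\ge 0$, equal distance after the swap, and strict decrease of $|\supp(\hat y)\setminus \cS^\sigma_{[1,s]}|$ guaranteeing termination) all justified correctly.
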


Combining Lemmas \ref{proj-general}  and \ref{pos-proj}, we obtain the following theorem, which provides a formula 
for finding a point in $\proj_{\cFr}(x)$ for any 
$x\in\Re^n$. 

\begin{theorem} \label{opt-proj} 
Let $\cp$ be the associated operator of $\Omega$ 
defined in \eqref{px}. Given any $x\in\Re^n$, let $T=S^\sigma_{[1,s]} $ for some $\sigma \in \tsigma(\cp(x))$.  
Define $y\in\Re^n$ as follows:
\[
y_T = \proj_{\Omega_T}(x_T), \quad\quad y_{T^\c} = 0.
\]
Then $y\in\proj_{\cC_s \cap \Omega}(x)$.
\end{theorem}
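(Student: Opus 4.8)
The plan is to combine the two preceding lemmas in a direct, almost mechanical way, since Theorem~\ref{opt-proj} is essentially the constructive specialization of the abstract projection characterization. First I would invoke Lemma~\ref{pos-proj}: given $x\in\Re^n$ and the chosen sorting permutation $\sigma \in \tsigma(\cp(x))$, that lemma guarantees the existence of some point $\bar y \in \proj_{\cFr}(x)$ for which $T = \cS^\sigma_{[1,s]}$ is an $s$-super support of $\bar y$ with cardinality $s$, i.e.\ $T \in \cT_s(\bar y) \subseteq \overline\cT_s(\bar y)$. This is the step that pins down the support set $T$ as an admissible super support of an \emph{actual} projection, which is exactly what is needed to apply the coordinatewise formula.

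Next I would apply Lemma~\ref{proj-general} to this same $\bar y$. Since $T \in \overline\cT_s(\bar y)$, the lemma yields $\bar y_T = \proj_{\Omega_T}(x_T)$, and because $|T| = s$ forces $\supp(\bar y) \subseteq T$ (as $\|\bar y\|_0 \le s$ and $T$ is a super support), we also have $\bar y_{T^\c} = 0$. Comparing this with the definition of $y$ in the statement, $y_T = \proj_{\Omega_T}(x_T)$ and $y_{T^\c}=0$, I would conclude that $y = \bar y$ provided the projection $\proj_{\Omega_T}(x_T)$ is read consistently. Since $\Omega_T$ is a convex set (a coordinate slice of the convex $\Omega$), $\proj_{\Omega_T}(x_T)$ is a singleton, so there is no ambiguity, and therefore $y = \bar y \in \proj_{\cFr}(x)$, which is the desired conclusion.

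The only subtlety I anticipate, and hence the main thing to nail down carefully, is the convexity of $\Omega_T$ guaranteeing that $\bar y_T$ and the defined $y_T$ coincide as the \emph{unique} point of $\proj_{\Omega_T}(x_T)$, rather than merely both lying in a possibly multivalued projection set. One should verify that $\Omega_T = \{z \in \Re^{|T|} : \sum_{i\in T} z_i \be_i \in \Omega\}$ inherits convexity and closedness from $\Omega$, so that the Euclidean projection onto it is well defined and single-valued; this is routine but is the hinge that lets me identify the constructed $y$ with the $\bar y$ produced by Lemma~\ref{pos-proj}. I would present this identification explicitly and then close the argument, noting that the whole proof is a one-line synthesis once the support set $T$ has been supplied by Lemma~\ref{pos-proj} and the coordinate formula by Lemma~\ref{proj-general}.
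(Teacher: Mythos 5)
Your proof is correct and takes essentially the same route as the paper, which states Theorem~\ref{opt-proj} precisely as the combination of Lemma~\ref{pos-proj} (producing some $\bar y\in\proj_{\cFr}(x)$ with $T=\cS^\sigma_{[1,s]}\in\cT_s(\bar y)$, hence $\supp(\bar y)\subseteq T$ and $\bar y_{T^\c}=0$) with Lemma~\ref{proj-general} (giving $\bar y_T=\proj_{\Omega_T}(x_T)$). Your added observation that $\Omega_T$ is closed and convex, so that $\proj_{\Omega_T}(x_T)$ is single-valued and the constructed $y$ is forced to coincide with $\bar y$, is exactly the detail the paper leaves implicit, and it is the right hinge of the identification.
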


In the following two theorems, we provide some sufficient conditions under which  
the orthogonal projection of a point onto $\cFr$ reduces to a single point.   

\begin{theorem} \label{unique-soln}
 Given $a\in\Re^n$, suppose there exists some $y \in \proj_{\cC_s \cap \Omega}(a)$ with $\|y\|_0 < s$. Then $\proj_{\cC_s \cap \Omega}(a)$ is 
a singleton containing $y$.
\end{theorem}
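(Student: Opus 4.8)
The plan is to prove the stronger statement that \emph{every} $z\in\proj_{\cFr}(a)$ equals $y$, writing $S=\supp(y)$ and $R=\supp(z)$ and exploiting throughout that $|S|=\|y\|_0<s$ leaves at least one free slot in every $s$-super support of $y$. By Lemma~\ref{relation-projs}, applied to the sign-free symmetric set $\cFr=\cC_s\cap\Omega$, the sign-free case reduces to the nonnegative one: the map $w\mapsto\sign(a)\circ w$ is a support-preserving involution carrying $\proj_{\cFr}(a)$ onto $\proj_{\cC_s\cap(\Omega\cap\Re^n_+)}(|a|)$, and $\Omega\cap\Re^n_+$ is again nonnegative symmetric. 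I would therefore assume from the outset that $\Omega$ is nonnegative symmetric, so that $\cp$ is the identity.

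The first key step is a \emph{strict} top-weighting property: for every $i\in S$ and $j\notin S$ one has $a_j<a_i$. The non-strict inequality $a_j\le a_i$ follows by applying Lemma~\ref{monotone} to the closed symmetric set $\Omega_T$ with $T=S\cup\{j\}$: since $|T|=|S|+1\le s$, Lemma~\ref{proj-general} gives $y_T=\proj_{\Omega_T}(a_T)$, and then $(y_i-y_j)(a_i-a_j)\ge0$ combined with $y_j=0<y_i$ yields $a_i\ge a_j$. For strictness I would suppose $a_i=a_j$ and use equivariance of the projection onto the convex set $\Omega_T$ under the coordinate-swap isometry $\rho_{ij}$, which preserves $\Omega_T$ by symmetry and fixes $a_T$; uniqueness of $\proj_{\Omega_T}$ then forces $\rho_{ij}y_T=y_T$, hence $y_i=y_j$, contradicting $y_j=0\ne y_i$.

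With strict top-weighting in hand I would dispose of $z$ by comparing $R$ with $S$. If $R\subseteq S$, then $S\in\overline\cT_s(z)$, so Lemma~\ref{proj-general} gives $z_S=\proj_{\Omega_S}(a_S)=y_S$ and hence $z=y$; symmetrically, if $S\subseteq R$, then $R\in\overline\cT_s(y)\cap\overline\cT_s(z)$ (legitimate precisely because $|R|\le s$), and Lemma~\ref{proj-general} applied to both points gives $z_R=\proj_{\Omega_R}(a_R)=y_R$, again $z=y$. The remaining case, where $S$ and $R$ are incomparable, is the one to rule out: choose $i\in S\setminus R$ and $k\in R\setminus S$, so that $z_i=0$, $z_k>0$, and $a_k<a_i$ by strict top-weighting. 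Swapping coordinates $i$ and $k$ of $z$ produces $z'\in\Omega$ by symmetry with $\|z'\|_0=\|z\|_0\le s$, so $z'\in\cFr$, while a direct computation gives $\|z'-a\|^2-\|z-a\|^2=2z_k(a_k-a_i)<0$, contradicting $z\in\proj_{\cFr}(a)$. Thus every $z$ coincides with $y$, i.e.\ $\proj_{\cFr}(a)=\{y\}$.

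I expect the strictness half of the top-weighting property to be the main obstacle. The non-strict monotonicity is immediate from Lemma~\ref{monotone}, but a tie $a_i=a_j$ between a used coordinate $i\in S$ and an unused coordinate $j\notin S$ is exactly the configuration that could spawn a second projection by swapping $i$ and $j$. The hypothesis $\|y\|_0<s$ is what defeats such ties: it guarantees that $j$ fits into an $s$-super support $S\cup\{j\}$ of $y$, so $y_j$ is genuinely the $j$-th coordinate of a convex projection, and the symmetry/uniqueness argument can force $y_i=y_j$ and thereby exclude the tie. Once this is secured, the only delicate bookkeeping left is the incomparable-supports swap, where one checks that the swapped point remains in $\cFr$ and strictly decreases the distance to $a$.
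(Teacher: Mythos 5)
Your proposal is correct, and while its top-level skeleton matches the paper's (strict separation of $a$ over $\supp(y)$ versus its complement, then support comparison, then Lemma \ref{proj-general} on a common super support to finish), the key mechanisms are genuinely different. First, you reduce the sign-free case to the nonnegative one at the outset by applying Lemma \ref{relation-projs} to the closed sign-free symmetric set $\cC_s\cap\Omega$, so that $w\mapsto\sign(a)\circ w$ is a support-preserving involution between $\proj_{\cC_s\cap\Omega}(a)$ and $\proj_{\cC_s\cap\Omega\cap\Re^n_+}(|a|)$; the paper instead runs both cases in parallel through the operator $\cp$ of \eqref{px}, at the price of case-dependent constructions (e.g., its sign-adjusted entry $\beta$ in the swapped point). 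Second, your strictness argument is different and cleaner: where the paper builds an equidistant swapped point $z$, verifies $\|z-a\|=\|y-a\|$ by direct expansion, and derives a contradiction from $(y+z)/2\in\cC_s\cap\Omega$ together with strict convexity of $\|\cdot\|^2$, you work inside the \emph{convex} projection onto $\Omega_T$ with $T=\supp(y)\cup\{j\}$ — an $s$-super support precisely because $\|y\|_0<s$, which is the same place the hypothesis enters the paper's argument, there to keep the midpoint in $\cC_s$ — and invoke equivariance plus uniqueness of $\proj_{\Omega_T}$ under the transposition fixing $a_T$. Third, for uniqueness the paper proves $\supp(z)\subseteq\supp(y)$ for every competing projection $z$ via a second monotonicity-plus-midpoint contradiction before applying Lemma \ref{proj-general} with $T\in\cT_s(y)$; you instead trichotomize the supports, dispatch both comparable cases directly with Lemma \ref{proj-general} (your $S\subseteq R$ case is legitimate since $|R|\le s$ makes $R\in\overline\cT_s(y)$), and eliminate incomparable supports with the one-line computation $\|z'-a\|^2-\|z-a\|^2=2z_k(a_k-a_i)<0$, which needs the strict inequality you established. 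The trade-off: the paper's $\cp$-formulation avoids any reduction step and reuses one midpoint template twice, while your version localizes all convexity arguments to the honestly convex sets $\Omega_T$, replaces both midpoint arguments with exact computations or uniqueness of convex projections, and makes fully explicit that the role of $\|y\|_0<s$ is to leave a free slot in an $s$-super support.
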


\begin{proof}
For convenience, let $I=\supp(y)$. We first show that 
\beq \label{monotone-p}
(\cp(a))_i > (\cp(a))_j \quad\quad \forall i\in I, j\in I^\c, 
\eeq
where $\cp$ is defined in \eqref{px}. By the definitions of $I$ and $\cp$, one can observe that 
$(\cp(y))_i>(\cp(y))_j$  for all $i\in I$ and $j\in I^\c$. This together with 
\eqref{p-monotone} with $x=a$ implies that 
\[
(\cp(a))_i \ge (\cp(a))_j \quad\quad \forall i\in I, j\in I^\c.
\]
It then follows that for proving \eqref{monotone-p} it suffices to show 
\[
(\cp(a))_i \neq (\cp(a))_j \quad\quad \forall i\in I, j\in I^\c.
\]
Suppose on the contrary that $(\cp(a))_i = (\cp(a))_j$ for some $i\in I$ and $j\in I^\c$. Let
\[
\beta = \left\{
\ba{ll}
y_i & \mbox{if} \  \Omega \ \mbox{is nonnegative symmetric}, \\ 
\sign(a_j) |y_i|  &  \mbox{if} \  \Omega\  \mbox{is sign-free symmetric},
\ea\right.
\]
and let $z\in\Re^n$ be defined as follows:
\[
z_\ell = \left\{
\ba{ll}
y_j  & \ \mbox{if} \ \ell=i ,  \\
\beta & \ \mbox{if} \  \ell=j ,  \\
y_\ell & \ \mbox{otherwise}, 
\ea
\right.
\quad\quad \ell =1, \ldots, n.
\]
Since $\Omega$ is either nonnegative  or sign-free symmetric, it is not hard to see that $z\in\Omega$. Notice 
that $y_i \neq 0$ and $y_j=0$ due to $i\in I$ and $j\in I^\c$. This together with $\|y\|_0<s$ and 
the definition of $z$ implies $\|z\|_0<s$. Hence, $z\in\cC_s \cap \Omega$. In view of Lemma \ref{relation-projs} 
with $x=a$ and $\cX=\Omega$ and $y \in \proj_{\cC_s \cap \Omega}(a)$, one can observe that $y \circ a \ge 0$ when 
$\Omega$ is sign-free symmetric. Using this fact and the definitions of 
$\cp$ and $z$, one can observe that 
\[
y_i a_i = (\cp(y))_i  (\cp(a))_i, \quad\quad z_j a_j = (\cp(y))_i  (\cp(a))_j,
\]
which along with the supposition $(\cp(a))_i = (\cp(a))_j$ yields $y_i a_i = z_j a_j$. In addition, 
one can see that $z_j^2 = y_i^2$. Using these two relations,  $z_i=y_j=0$, and the definition of 
$z$, we have   
\beqa
\|z-a\|^2 &=& \sum\limits_{\ell \neq i, j} (z_\ell-a_\ell)^2 + (z_i-a_i)^2 + (z_j-a_j)^2  
= \sum\limits_{\ell \neq i, j} (y_\ell-a_\ell)^2 + a^2_i + z_j^2 - 2 z_ja_j +a^2_j \nn \\ 
&=& \sum\limits_{\ell \neq i, j} (y_\ell-a_\ell)^2 + a^2_i + y_i^2 - 2 y_ia_i +a^2_j  
= \|y-a\|^2. \label{y-opt}
\eeqa
 In addition, by the definition of $z$ and the convexity of $\Omega$, 
it is not hard to observe that  $y \neq z$ and $(y+z)/2 \in \cC_s \cap \Omega$.  By the strict convexity 
of $\|\cdot\|^2$, $y\neq z$ and \eqref{y-opt}, one has
\[
\left\|\frac{y+z}{2}-a\right\|^2 < \frac 12 \|y-a\|^2 +  \frac 12 \|z-a\|^2 = \|y-a\|^2,
\] 
which together with $(y+z)/2 \in \cC_s \cap \Omega$ contradicts the assumption 
$y \in \proj_{\cC_s \cap \Omega}(a)$. Hence, \eqref{monotone-p} holds as desired. 

We next show that for any $z\in \proj_{\cC_s \cap \Omega}(a)$, it holds that $\supp(z) \subseteq I$, 
where $I=\supp(y)$. Suppose for contradiction that there exists some $j\in I^\c$ such that $z_j \neq 0$, 
which together with the definition of $\cp$ yields $(\cp(z))_j \neq 0$.  Clearly,  $\cp(z) \ge 0$ due to $z\in\Omega$ 
and \eqref{px}.  It then follows that $(\cp(z))_j >0$.  In view of Lemma \ref{monotone-1}, we further have
\[
 [(\cp(z))_i - (\cp(z))_j][(\cp(a))_i - (\cp(a))_j]  \ge 0  \quad\quad \forall i\in I,
\]
which together with $j\in I^\c$ and \eqref{monotone-p}  implies $(\cp(z))_i \ge  (\cp(z))_j$ for all $i\in I$.  Using this, 
$(\cp(z))_j >0$ and the definition of $\cp$, we see that $(\cp(z))_i>0$ and hence $z_i \neq 0$ for all 
$i\in I$. Using this relation, $y,z\in\Omega$, and the convexity of $\Omega$, one can see that $(y+z)/2 
\in \cC_s \cap \Omega$. In addition, since $y, z\in \proj_{\cC_s \cap \Omega}(a)$, we have 
$\|y-a\|^2=\|z-a\|^2$. Using this and a similar argument as above, one can show that 
$\|(y+z)/2-a\|^2 <  \|y-a\|^2$, which together with $(y+z)/2 \in \cC_s \cap \Omega$ 
contradicts the assumption $y\in \proj_{\cC_s \cap \Omega}(a)$.  

Let $z \in \proj_{\cC_s \cap \Omega}(a)$. As shown above, $\supp(z) \subseteq \supp(y)$. Let $T \in 
\cT_s(y)$. It then follows that $T\in\cT_s(z)$. Using these two relations and Lemma \ref{proj-general}, 
we have $y_T = \proj_{\Omega_T}(a_T) = z_T$. 
Notice that $y_{T^\c}=z_{T^\c}=0$. It thus follows $y=z$, which implies that the set 
$\proj_{\cC_s \cap \Omega}(a)$  contains $y$ only.
\end{proof}
\gap
\begin{theorem} \label{unique-soln1}
Given $a\in\Re^n$, suppose there exists some $y \in \proj_{\cC_s \cap \Omega}(a)$ such that 
\beq \label{a-ineq}
\min\limits_{i \in I} \ (\cp(a))_i > \max\limits_{i \in I^\c} \ (\cp(a))_i, 
\eeq
where $I=\supp(y)$. Then $\proj_{\cC_s \cap \Omega}(a)$ is a singleton containing $y$.
\end{theorem}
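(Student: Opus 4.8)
The plan is to recognize that the hypothesis \eqref{a-ineq} is nothing but the separation inequality \eqref{monotone-p} that was derived inside the proof of Theorem \ref{unique-soln}. Indeed, \eqref{a-ineq} reads $\min_{i\in I}(\cp(a))_i>\max_{i\in I^\c}(\cp(a))_i$, which is equivalent to $(\cp(a))_i>(\cp(a))_j$ for all $i\in I$ and $j\in I^\c$, i.e. exactly \eqref{monotone-p}. In Theorem \ref{unique-soln}, \eqref{monotone-p} was the only consequence drawn from the assumption $\|y\|_0<s$, and once it was established the remainder of that proof reached the singleton conclusion without ever using $\|y\|_0<s$ again. I would therefore open the proof by stating this equivalence and then reuse, essentially verbatim, the portion of the argument for Theorem \ref{unique-soln} that follows \eqref{monotone-p}.

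The first step is to show that every $z\in\proj_{\cC_s\cap\Omega}(a)$ satisfies $\supp(z)\subseteq I$. I would argue by contradiction: if $z_j\neq0$ for some $j\in I^\c$, then $(\cp(z))_j>0$, and Lemma \ref{monotone-1} applied to $a$ and $z$ gives $[(\cp(z))_i-(\cp(z))_j][(\cp(a))_i-(\cp(a))_j]\ge0$ for every $i\in I$. Feeding \eqref{a-ineq} into the second bracket forces $(\cp(z))_i\ge(\cp(z))_j>0$, hence $z_i\neq0$, for all $i\in I$; thus $I=\supp(y)\subseteq\supp(z)$. This support containment, together with $\|z\|_0\le s$ and the convexity of $\Omega$, yields $(y+z)/2\in\cC_s\cap\Omega$, while $z_j\neq0=y_j$ gives $y\neq z$. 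Since $y$ and $z$ are both projections of $a$ we have $\|y-a\|^2=\|z-a\|^2$, so strict convexity of $\|\cdot\|^2$ produces $\|(y+z)/2-a\|^2<\|y-a\|^2$, contradicting $y\in\proj_{\cC_s\cap\Omega}(a)$.

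The second step converts this support containment into equality of the projections. Because $\|y\|_0\le s\le n-1$, I can choose some $T\in\cT_s(y)$ by extending $\supp(y)$ to cardinality exactly $s$. As $\supp(z)\subseteq I\subseteq T$ and $|T|=s$, the same $T$ lies in $\cT_s(z)$, so Lemma \ref{proj-general} applies to both $y$ and $z$ to give $y_T=\proj_{\Omega_T}(a_T)=z_T$; since $y_{T^\c}=z_{T^\c}=0$ this forces $y=z$. Hence $\proj_{\cC_s\cap\Omega}(a)=\{y\}$.

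I do not expect a genuine obstacle here: the substance of the theorem is the observation that the strict-separation hypothesis \eqref{a-ineq} makes the assumption $\|y\|_0<s$ of Theorem \ref{unique-soln} superfluous. The only points that warrant a quick check are that the reused part of the earlier proof never secretly relied on $\|y\|_0<s$ — it does not, since $(y+z)/2\in\cC_s$ comes from $I\subseteq\supp(z)$ and $\|z\|_0\le s$ rather than from any bound on $\|y\|_0$ — and that $\cT_s(y)$ is nonempty, which holds because $s\le n-1$.
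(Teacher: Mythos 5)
Your proof is correct, but it is organized differently from the paper's. The paper proves Theorem \ref{unique-soln1} by a case split on the cardinality of $y$: when $\|y\|_0<s$ it simply invokes Theorem \ref{unique-soln} as a black box, and when $\|y\|_0=s$ (so that $|I|=s$) it uses Lemma \ref{monotone-1} together with \eqref{a-ineq} to obtain the weak inequality $\min_{i\in I}(\cp(z))_i\ge\max_{j\in I^\c}(\cp(z))_j$ and then a counting argument --- since $\cp(z)\ge 0$ and $\|\cp(z)\|_0=\|z\|_0\le s=|I|$, any $(\cp(z))_j>0$ with $j\in I^\c$ would force $\|z\|_0\ge s+1$ --- to conclude $z_{I^\c}=0$ and $\supp(z)\subseteq I$, finishing with Lemma \ref{proj-general} applied to $T=I$. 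You instead observe that \eqref{a-ineq} is exactly the intermediate relation \eqref{monotone-p} of Theorem \ref{unique-soln}'s proof and rerun its second half uniformly: the strict-convexity midpoint contradiction yields $\supp(z)\subseteq I$ without distinguishing $\|y\|_0<s$ from $\|y\|_0=s$, and your verification that $(y+z)/2\in\cC_s$ needs only $I\subseteq\supp(z)$ and $\|z\|_0\le s$ (rather than any bound on $\|y\|_0$) is precisely the point that makes this work. What each approach buys: the paper's Case 2 is more economical for the full-support case, since strict separation plus counting disposes of $z_j\neq 0$ immediately with no midpoint construction; your route eliminates the case analysis altogether and exposes the logical structure --- the hypothesis $\|y\|_0<s$ in Theorem \ref{unique-soln} serves only to establish \eqref{monotone-p}, so Theorem \ref{unique-soln1} is the natural general statement and Theorem \ref{unique-soln} runs on the same mechanism. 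One shared caveat worth a sentence in a fully rigorous write-up: both you and the paper apply Lemma \ref{monotone-1}, which is stated for $y\in\proj_\Omega(x)$, to points of $\proj_{\cC_s\cap\Omega}(a)$; this is legitimate because the lemma's proof never uses convexity and $\cC_s\cap\Omega$ inherits closedness and the nonnegative or sign-free symmetry from $\Omega$, but strictly speaking it is an extension of the lemma as stated.
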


\begin{proof}
We divide the proof into two separate cases as follows. 

Case 1):  $\|y\|_0<s$.  The conclusion holds due to Theorem \ref{unique-soln}.

Case 2): $\|y\|_0=s$. This along with $I=\supp(y)$ yields $|I|=s$. Let $z\in\proj_{\cC_s \cap \Omega}(a)$.  
In view of Lemma \ref{monotone-1} and 
\eqref{a-ineq}, one has 
\beq \label{py-ineq}
\min\limits_{i \in I} \ (\cp(z))_i \ge \max\limits_{i \in I^\c} \ (\cp(z))_i.
\eeq
Notice $z\in\cC_s \cap \Omega$. Using this and the definition of $\cp$,  we observe that 
$\|\cp(z)\|_0=\|z\|_0  \le s$ and $\cp(z) \ge 0$. These relations together with $|I|=s$ and 
\eqref{py-ineq} imply that $(\cp(z))_i=0$ for all $i \in I^\c$. This yields $z_{I^\c}=0$. 
It then follows that $\supp(z) \subseteq I$. Hence, $I \in \cT_s(z)$  and $I \in \cT_s(y)$ due to $|I|=s$. 
Using these, Lemma \ref{proj-general}, and $y,z\in\proj_{\cC_s \cap \Omega}(a)$, one has 
\[
z_I = \proj_{\Omega_I}(a_I) = y_I,
\]
which together with $z_{I^\c}=y_{I^\c}=0$ implies $z=y$. Thus $\proj_{\cC_s \cap \Omega}(a)$ contains only $y$.
\end{proof}


\section{Optimality conditions} \label{opt-condns}

In this section we study some optimality conditions for problem \eqref{sparse-prob}. We start by reviewing a necessary optimality condition that was established in Theorem 5.3 of \cite{BeHa14}.

\begin{theorem}[necessary optimality condition]\label{nec-cond}
Suppose that $x^*$ is an optimal solution of problem \eqref{sparse-prob}. Then there holds
\beq \label{nec-cond0}
x^*\in \proj_\cFr(x^*-t\nabla f(x^*)) \quad\quad \forall t\in[0,1/L_f),
\eeq 
 that is, $x^*$ is an optimal (but possibly not unique) solution to the problems 
\[
\min\limits_{x\in \cFr} \|x-(x^*-t\nabla f(x^*))\|^2 \quad\quad \forall t\in[0,1/L_f).
\]
\end{theorem}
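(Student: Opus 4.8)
The plan is to argue by contradiction, leaning on the quadratic upper bound (the \emph{descent lemma}) furnished by the Lipschitz condition \eqref{lipschitz}: for all $x,y\in\Re^n$,
\[
f(y) \le f(x) + \langle \nabla f(x), y-x\rangle + \frac{L_f}{2}\|y-x\|^2 .
\]
First I would dispose of the case $t=0$: since $x^*$ is feasible, $\|x^*-x^*\|=0$ attains the smallest possible distance, so $x^*\in\proj_\cFr(x^*)$ holds automatically.

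Fix now $t\in(0,1/L_f)$ and suppose for contradiction that $x^*\notin\proj_\cFr(x^*-t\nabla f(x^*))$. Then there exists $\tilde x\in\cFr$ with $\tilde x\neq x^*$ and
\[
\|\tilde x-(x^*-t\nabla f(x^*))\|^2 < \|x^*-(x^*-t\nabla f(x^*))\|^2 = t^2\|\nabla f(x^*)\|^2 .
\]
Expanding the left-hand side and cancelling the common term $t^2\|\nabla f(x^*)\|^2$ yields
\[
\|\tilde x-x^*\|^2 + 2t\langle \nabla f(x^*),\tilde x-x^*\rangle < 0,
\]
so dividing by $2t>0$ gives the key bound $\langle \nabla f(x^*),\tilde x-x^*\rangle < -\tfrac{1}{2t}\|\tilde x-x^*\|^2$.

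Finally I would substitute this into the descent lemma applied with $x=x^*$ and $y=\tilde x$:
\[
f(\tilde x) \le f(x^*) + \langle \nabla f(x^*),\tilde x-x^*\rangle + \frac{L_f}{2}\|\tilde x-x^*\|^2 < f(x^*) + \frac12\Bigl(L_f-\frac1t\Bigr)\|\tilde x-x^*\|^2 .
\]
Because $t<1/L_f$ forces $L_f-1/t<0$, while $\tilde x\neq x^*$ gives $\|\tilde x-x^*\|^2>0$, the right-hand side is strictly below $f(x^*)$. Hence $f(\tilde x)<f(x^*)$ with $\tilde x\in\cFr$, contradicting the optimality of $x^*$ for \eqref{sparse-prob}.

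The step deserving the most care is the guarantee that the competitor $\tilde x$ can be taken strictly different from $x^*$; this is exactly what the failure ``$x^*\notin\proj_\cFr(\cdot)$'' supplies, since any point attaining a strictly smaller projection objective than $x^*$ cannot coincide with $x^*$. Everything else is the routine interplay between the projection inequality and the descent lemma, where the threshold $1/L_f$ enters precisely to render the coefficient $L_f-1/t$ negative and thus convert a descent direction into a strict decrease in $f$.
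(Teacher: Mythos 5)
Your proof is correct. A point of bookkeeping first: the paper does not prove Theorem \ref{nec-cond} at all; it imports it from Theorem 5.3 of \cite{BeHa14}. However, your argument is precisely the mechanism the paper deploys one theorem later for the \emph{strong} necessary condition: the proof of Theorem \ref{opt-cond-prop1} runs the same chain as your final display (compare \eqref{fw}), combining the projection inequality $\nabla f(x^*)^T(\tx^*-x^*) \le -\tfrac{1}{2t}\|\tx^*-x^*\|^2$ with the descent lemma from \eqref{lipschitz} and the sign of $L_f-1/t$ for $t\in(0,1/L_f)$. The only difference is the contradiction hypothesis: you assume $x^*\notin\proj_\cFr(x^*-t\nabla f(x^*))$, which hands you a competitor with a \emph{strictly} smaller projection objective (automatically distinct from $x^*$, as you note), whereas the paper assumes only the existence of another minimizer $\tx^*\neq x^*$, obtains the non-strict projection inequality, and extracts strict decrease from $t<1/L_f$ together with $\tx^*\neq x^*$. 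So your proof upgrades for free: weakening your strict inequality to $\le$ and assuming merely $\tilde x\neq\ x^*$ yields the uniqueness statement \eqref{opt-cond1} as well. One detail you handle correctly but could make explicit: passing from non-membership to the existence of a strictly better feasible point needs only that the infimum over $\cFr$ lies strictly below $t\|\nabla f(x^*)\|$, so attainment of the projection is not even required (though it holds, $\cFr$ being closed and nonempty).
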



We next establish a stronger necessary optimality condition than the one stated 
above for \eqref{sparse-prob}.

\begin{theorem}[strong necessary optimality condition]  \label{opt-cond-prop1}
Suppose that $x^*$ is an optimal solution of problem \eqref{sparse-prob}. Then there holds
\beq \label{opt-cond1}
x^*= \proj_\cFr(x^*-t\nabla f(x^*)) \quad\quad \forall t\in[0,1/L_f),
\eeq
that is, $x^*$ is the unique optimal solution to the problems 
\beq \label{gopt-soln} 
\min\limits_{x\in \cFr} \|x-(x^*-t\nabla f(x^*))\|^2 \quad\quad \forall t\in[0,1/L_f).
\eeq 
\end{theorem}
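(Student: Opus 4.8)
The plan is to upgrade the membership statement of Theorem \ref{nec-cond}, namely $x^* \in \proj_\cFr(x^*-t\nabla f(x^*))$, to the singleton statement \eqref{opt-cond1} by ruling out the existence of a second minimizer through a descent argument; this is exactly the mechanism foreshadowed in the introduction. Write $a = x^* - t\nabla f(x^*)$. The boundary value $t=0$ is immediate: there $a=x^*\in\cFr$, so the problem $\min_{x\in\cFr}\|x-a\|^2$ has optimal value $0$, attained only at $x^*$, whence $\proj_\cFr(a)=\{x^*\}$. For the remaining range $t\in(0,1/L_f)$ I would argue by contradiction.

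Suppose some $\tx^* \in \proj_\cFr(a)$ satisfies $\tx^*\neq x^*$. Since Theorem \ref{nec-cond} already places $x^*\in\proj_\cFr(a)$, both points attain the minimal distance to $a$, so $\|\tx^*-a\|^2=\|x^*-a\|^2$. Expanding $\|\tx^*-a\|^2 = \|\tx^*-x^*\|^2 + 2\langle \tx^*-x^*, x^*-a\rangle + \|x^*-a\|^2$ and cancelling the common term yields the identity
\[
\langle \tx^*-x^*,\, x^*-a\rangle = -\tfrac12\|\tx^*-x^*\|^2.
\]
Because $x^*-a = t\nabla f(x^*)$, dividing by $t>0$ converts this into $\langle \nabla f(x^*), \tx^*-x^*\rangle = -\tfrac{1}{2t}\|\tx^*-x^*\|^2$.

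Next I would invoke the descent lemma, the standard consequence of the Lipschitz bound \eqref{lipschitz}, giving
\[
f(\tx^*) \le f(x^*) + \langle \nabla f(x^*), \tx^*-x^*\rangle + \frac{L_f}{2}\|\tx^*-x^*\|^2.
\]
Substituting the inner-product identity above collapses the right-hand side to $f(x^*) + \tfrac12\left(L_f - \tfrac1t\right)\|\tx^*-x^*\|^2$. Since $t<1/L_f$ forces $L_f - 1/t < 0$, and $\tx^*\neq x^*$ makes $\|\tx^*-x^*\|^2 > 0$, we conclude $f(\tx^*) < f(x^*)$ with $\tx^*\in\cFr$, contradicting the global optimality of $x^*$. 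Hence $\proj_\cFr(a)=\{x^*\}$, which is precisely \eqref{opt-cond1}.

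The argument is short, and I do not anticipate a serious obstacle; note in particular that it never uses the symmetric-set structure of $\cFr$, only closedness (so projections exist) together with global optimality of $x^*$. The two places demanding care are the boundary value $t=0$, handled separately because the division by $t$ is invalid there, and the strictness of the final inequality, which hinges critically on the \emph{strict} inclusion $t<1/L_f$ rather than $t\le 1/L_f$: at $t=1/L_f$ the coefficient $L_f-1/t$ vanishes and a distinct minimizer can no longer be excluded. The one conceptual point worth flagging is that equality of the two projection distances---guaranteed precisely because Theorem \ref{nec-cond} places $x^*$ itself in the projection---is exactly what turns the descent estimate into a strict decrease.
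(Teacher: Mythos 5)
Your proof is correct and follows essentially the same route as the paper's: assume a distinct minimizer $\tx^*$, extract the inner-product bound $\langle\nabla f(x^*),\tx^*-x^*\rangle\le-\frac{1}{2t}\|\tx^*-x^*\|^2$, and feed it into the descent lemma to get $f(\tx^*)<f(x^*)$, contradicting optimality. The only cosmetic difference is that you invoke Theorem \ref{nec-cond} to obtain this relation as an exact equality, whereas the paper derives the (sufficient) inequality directly from $\|\tx^*-a\|\le\|x^*-a\|$, using only that $\tx^*$ minimizes and $x^*\in\cFr$ is feasible.
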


\begin{proof}
The conclusion clearly holds for $t=0$.  Now let $t \in (0,1/L_f)$ be arbitrarily chosen. Suppose for contradiction 
that problem \eqref{gopt-soln} has an optimal solution $\tx^* \in \cFr$ with $\tx^*\neq x^*$. It then follows that    
\[
\|\tx^*-(x^*-t\nabla f(x^*))\|^2 \le \|x^*-(x^*-t\nabla f(x^*))\|^2,
\]
which leads to 
\[
\nabla f(x^*)^T(\tx^*-x^*) \le -\frac{1}{2t} \|\tx^*-x^*\|^2.
\]
In view of this relation, \eqref{lipschitz} and the facts that $t \in (0,1/L_f)$ and $\tx^* \neq x^*$, one 
can obtain that 
\beqa
f(\tx^*) &\le &  f(x^*)+\nabla f(x^*)^T(\tx^*-x^*) + \frac{L_f}{2}\|\tx^*-x^*\|^2 \nn \\ 
&\le &  f(x^*) + \frac12\left(L_f-\frac1t\right)\|\tx^*-x^*\|^2 \ < \ f(x^*), \label{fw}
\eeqa
which contradicts the assumption that $x^*$ is an optimal solution of  problem \eqref{sparse-prob}.
\end{proof}

\gap

For ease of later reference, we introduce the following definitions.

\begin{definition} 
$x^*\in \Re^n$ is called a general stationary point of problem \eqref{sparse-prob} if it satisfies the necessary 
optimality condition \eqref{nec-cond0}. 
\end{definition} 

\begin{definition} 
$x^*\in \Re^n$ is called a  strong stationary point of problem \eqref{sparse-prob} if it satisfies the strong 
necessary optimality condition \eqref{opt-cond1}. 
\end{definition} 

Clearly, a  strong stationary point must be a general stationary point, but the converse may not be true. In 
addition,  from the proof of Theorem \ref{opt-cond-prop1}, one can easily improve the quality of  a general but 
not a strong stationary point $x^*$ by finding a point 
$\tx^*\in\proj_\cFr(x^*-t\nabla f(x^*))$ with $\tx^*\neq x^*$.  As seen from above, $f(\tx^*) < f(x^*)$, which 
means $\tx^*$ has a better quality than $x^*$ in terms of the objective value of \eqref{sparse-prob}.

Before ending this section, we present another necessary optimality condition for problem 
\eqref{sparse-prob} that was established in \cite{BeHa14}.

\begin{theorem}[Lemma 6.1 of \cite{BeHa14}]\label{thm-beck}
If $x^*$ is an optimal solution of  problem \eqref{sparse-prob}, there hold: 
\[
\ba{l}
x^*_T = \proj_{\Omega_T}(x^*_T-t(\nabla f(x^*))_T) \quad\quad \forall T\in\cT_s(x^*),  \\ [8pt]
f(x^*) \le \left\{\ba{ll} 
\min\{f(x^*-x^*_i \be_i+x^*_i \be_j), f(x^*-x^*_i \be_i-x^*_i \be_j)\} & \mbox{if} \ \Omega \ 
\mbox{is sign-free} \\ 
&\mbox{symmetric};  \\ [8pt]
f(x^*-x^*_i \be_i+x^*_i \be_j) & \mbox{if} \ \Omega \ 
\mbox{is  nonnegative} \\ 
&\mbox{symmetric}
\ea\right.
\ea
\]
for some $t>0$ and some $i,j$ satisfying 
\[
\ba{l}
i \in \Arg\min\{(\cp(-\nabla f(x^*)))_\ell: \ell \in I\}, \\ [5pt]
j \in \Arg\max\{(\cp(-\nabla f(x^*)))_\ell: \ell \in [\supp(x^*)]^\c\},
\ea
\]
 where $I= \Arg\min\limits_{i \in \supp(x^*)} (\cp(x^*))_i$.
\end{theorem}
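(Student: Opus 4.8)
The plan is to establish the two conclusions separately. The first (the coordinatewise projected-gradient identity on every full support $T\in\cT_s(x^*)$) follows by restricting problem \eqref{sparse-prob} to a convex ``face'' and invoking the standard fixed-point characterization of stationarity over a convex set; the second (the swap inequalities) follows from the symmetry of $\Omega$, which renders the relevant swapped vectors feasible, together with the global optimality of $x^*$. The scalar $t$ in the statement enters only through the first line, and the indices $i,j$ only through the second, so it suffices to verify each line for a suitable selection.

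For the first conclusion I would fix $T\in\cT_s(x^*)$; such a $T$ exists since $\|x^*\|_0\le s$ and $s\le n-1$, so $\supp(x^*)$ can be enlarged to a set of cardinality exactly $s$. Consider the restricted feasible set $\{x\in\Omega:\ \supp(x)\subseteq T\}$. Because $|T|=s$, every such $x$ satisfies $\|x\|_0\le s$, so this set lies in $\cFr$ and contains $x^*$; hence $x^*$ also minimizes $f$ over it. Writing its points as $x_{T^\c}=0$ with $x_T\in\Omega_T$ (using the definition of $\Omega_T$), this is exactly the convex program $\min\{f(x):\ x_{T^\c}=0,\ x_T\in\Omega_T\}$, whose feasible region is closed and convex since $\Omega$ is. I would then apply the classical fact that a minimizer $u^*$ of a differentiable $g$ over a closed convex set $\cX$ satisfies $u^*=\proj_{\cX}(u^*-t\nabla g(u^*))$ for every $t>0$ (equivalently $-\nabla g(u^*)\in\cN_{\cX}(u^*)$). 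Taking $\cX=\Omega_T$ and the gradient $(\nabla f(x^*))_T$ yields $x^*_T=\proj_{\Omega_T}(x^*_T-t(\nabla f(x^*))_T)$ for all $t>0$, in particular for some $t>0$.

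For the second conclusion I would first note the index sets are well defined: $[\supp(x^*)]^\c\neq\emptyset$ since $\|x^*\|_0\le s<n$, and $I=\Arg\min_{i\in\supp(x^*)}(\cp(x^*))_i$ is nonempty, so the prescribed $\Arg\min$ and $\Arg\max$ admit selections $i\in I$ and $j\in[\supp(x^*)]^\c$. The load-bearing step is feasibility-by-symmetry: since $j\notin\supp(x^*)$ gives $x^*_j=0$, the vector $w^{+}:=x^*-x^*_i\be_i+x^*_i\be_j$ is obtained from $x^*$ by the transposition interchanging coordinates $i$ and $j$, that is $w^{+}=(x^*)^\sigma$ for $\sigma\in\Sigma_n$ equal to $(i\,j)$. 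As $\Omega$ is symmetric, $w^{+}\in\Omega$, and since the swap only relocates a nonzero entry into a previously zero slot, $\|w^{+}\|_0=\|x^*\|_0\le s$, so $w^{+}\in\cFr$. When $\Omega$ is sign-free symmetric, applying the sign pattern $y\in\{-1,1\}^n$ that flips only coordinate $j$ gives $w^{-}:=w^{+}\circ y=x^*-x^*_i\be_i-x^*_i\be_j\in\Omega$, again in $\cFr$. Optimality of $x^*$ then forces $f(x^*)\le f(w^{+})$ in the nonnegative case and $f(x^*)\le\min\{f(w^{+}),f(w^{-})\}$ in the sign-free case, which is precisely the asserted inequality.

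I expect the proof to be mostly bookkeeping, so the ``hard part'' is chiefly presentational rather than technical. The two genuinely substantive moves are recognizing that fixing a cardinality-$s$ super support turns the nonconvex problem into a convex one on $\Omega_T$ (so the projected-gradient stationarity characterization applies verbatim), and the symmetry argument that keeps the swapped vectors inside $\cFr$. A subtle point I would flag explicitly is that the gradient-based prescriptions for $i$ and $j$ do \emph{not} drive the inequality: any feasible swap gives $f(x^*)\le f(\text{swap})$ purely from optimality, so these choices are merely selections from nonempty $\Arg$ sets, included to make the condition sharp and algorithmically usable. The only mildly delicate verification is the sign-free feasibility of $w^{-}$, which relies on the closure property $x\circ y\in\Omega$ for $y\in\{-1,1\}^n$ characterizing sign-free symmetric sets.
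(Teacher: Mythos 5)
Your proof is correct, and it is essentially the standard argument: note that the paper itself offers no proof of Theorem \ref{thm-beck} (it is imported verbatim as Lemma 6.1 of \cite{BeHa14}), and the original source proceeds exactly as you do --- restricting to a full super support $T\in\cT_s(x^*)$ so that the problem becomes a convex program over $\Omega_T$ (which, as you observe, yields the projection identity for \emph{every} $t>0$, stronger than the stated ``some $t>0$''), and using the symmetry, respectively sign-free symmetry, of $\Omega$ to keep the swapped vectors $w^{+}$ and $w^{-}$ inside $\cC_s\cap\Omega$ so that global optimality forces the inequalities. The one small imprecision is your claim that $I$ is automatically nonempty: if $x^*=0$ then $\supp(x^*)=\emptyset$ and no index $i$ exists, but this degeneracy is inherited from the statement itself (which must implicitly assume $x^*\neq 0$ for the swap condition to be meaningful), so it is not a gap in your argument.
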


\begin{definition} 
$x^*\in \Re^n$ is called a coordinatewise stationary point of problem \eqref{sparse-prob} if it 
satisfies the necessary optimality condition stated in Theorem \ref{thm-beck}.
\end{definition}


%

\section{A nonmonotone projected gradient method}
\label{method}


As seen from Theorem \ref{opt-proj}, the orthogonal projection a point onto $\cFr$  can be efficiently computed. 
Therefore, the classical projected gradient (PG) method with a constant step size can be suitably applied to solve problem 
\eqref{sparse-prob}.
In particular, given a $\T \in (0,1/L_f)$ and $x^0 \in \cFr$,  the PG method generates a sequence 
$\{x^k\} \subseteq \cFr$ according to 
\beq \label{x-seq}
x^{k+1} \in \proj_\cFr\left(x^k-\T\nabla f(x^k)\right) \quad\quad \forall k \ge 0.
\eeq 
The iterative hard-thresholding (IHT) algorithms \cite{BlDa08,BlDa09} are 
either a special case or a variant of the above method. 

By a similar argument as in the proof of Theorem \ref{opt-cond-prop1}, one can show that 
\beq \label{f-monotone}
f(x^{k+1}) \le f(x^k) - \frac12\left(\frac{1}{\T}-L_f\right) \|x^{k+1}-x^k\|^2 \quad\quad \forall k \ge 0,
\eeq
which implies that $\{f(x^k)\}$ is non-increasing. Suppose that $x^*$ is an accumulation point $\{x^k\}$. 
It then follows that $f(x^k) \to f(x^*)$.  In view of this and \eqref{f-monotone}, one can see that 
$\|x^{k+1}-x^k\| \to 0$, which along with \eqref{x-seq} and \cite[Theorem 5.2]{BeHa14} yields
\beq \label{T-station}
x^*\in \proj_\cFr(x^*-t\nabla f(x^*)) \quad\quad \forall t \in [0, \T].
\eeq
Therefore,  when $\T$ is close to $1/L_f$,  $x^*$  is nearly a general stationary point of problem 
\eqref{sparse-prob}, that is, it nearly satisfies the necessary optimality condition \eqref{nec-cond0}.
 It is, however, still possible that there exists some $\hatt \in (0,\T]$ such that 
$
x^* \neq \proj_\cFr(x^*-\hatt\nabla f(x^*)).
$
As discussed in Section \ref{opt-condns}, in this case one has $f(\tx^*)<f(x^*)$ for 
any $\tx^*\in\proj_\cFr(x^*-\hatt\nabla f(x^*))$ with $\tx^*\neq x^*$, and thus $x^*$ is clearly not 
an optimal solution of problem \eqref{sparse-prob}. To prevent this case from occuring, 
we propose a nonmonotone projected gradient (NPG) method for solving problem 
\eqref{sparse-prob}, which incorporates some support-changing and coordinate-swapping strategies into a 
projected gradient approach with variable stepsizes. 
We show that any accumulation point $x^*$ of  the sequence generated by NPG  
satisfies 
\[
x^*= \proj_\cFr(x^*-t\nabla f(x^*)) \quad\quad \forall t \in [0, \T]
\]
for any pre-chosen $\T \in (0,1/L_f)$. Therefore,  when $\T$ is close to $1/L_f$,  $x^*$ 
is nearly a strong stationary point of problem \eqref{sparse-prob}, that is, it 
nearly satisfies the strong necessary optimality condition \eqref{opt-cond1}.  
Under some suitable assumptions, we further show 
that $x^*$ is a  coordinatewise stationary point. Furthermore, if $\|x^*\|_0<s$,  
then $x^*$ is an optimal solution of \eqref{sparse-prob}, and it is a local optimal 
solution otherwise.

\subsection{Algorithm framework of NPG} \label{NPG}

In this subsection we present an NPG method for solving problem \eqref{sparse-prob}.  To proceed, we first 
introduce a subroutine called $\cswap$ that generates 
a new point $y$ by swapping some coordinates of a given point $x$. 
The aim of this subroutine is to generate a new point $y$ with a smaller objective value, namely, 
$f(y)<f(x)$, if the given point $x$ violates the second part of  the coordinatewise optimality conditions 
stated in Theorem \ref{thm-beck}. Upon incorporating this subroutine into the NPG method, we show 
that under some suitable assumption, any accumulation point of the generated sequence is a 
coordinatewise stationary point of  \eqref{sparse-prob}.

\gap

\noindent{\bf The subroutine $\cswap(x)$} \\ [4pt]
{\bf Input:} $x\in\Re^n$.
\bi
 \item[1)] Set $y=x$ and choose 
\[
\ba{l}
i \in \Arg\min\{(\cp(-\nabla f(x)))_\ell: \ell \in I\}, \\ [5pt]
j \in \Arg\max\{(\cp(-\nabla f(x)))_\ell: \ell \in [\supp(x)]^\c\},
\ea
\]
where $I= \Arg\min\{(\cp(x))_i: i \in \supp(x)\}$.
\item[2)] If $\Omega$ is nonnegative symmetric and $f(x)>f(x-x_i \be_i+x_i \be_j)$, set $y=x-x_i \be_i+x_i \be_j$.
\item[3)] If  $\Omega$ is sign-free symmetric and $f(x)> \min\{f(x-x_i \be_i+x_i \be_j), f(x-x_i \be_i-x_i \be_j)\}$, 
\bi 
\item[3a)] if $f(x-x_i \be_i+x_i \be_j) \le  f(x-x_i \be_i-x_i \be_j)$, set $y=x-x_i \be_i+x_i \be_j$. 
\item[3b)] if $f(x-x_i \be_i+x_i \be_j) >  f(x-x_i \be_i-x_i \be_j)$, set $y=x-x_i \be_i-x_i \be_j$. 
\ei
\ei
{\bf Output:} $y$.

\gap

We next introduce another subroutine called $\cs$ that generates a new point by changing  
some part of the support of a given point. Upon incorporating this subroutine into the NPG method, 
we show that  any accumulation point of the generated sequence is nearly a strong stationary point 
of \eqref{sparse-prob}.

\gap

 \noindent{\bf The subroutine $\cs(x,t)$} \\ [4pt]
{\bf Input:} $x\in\Re^n$ and $t\in\Re$.
\bi
 \item[1)] Set $a=x-t\nabla f(x)$, $I= \Arg\min\limits_{i \in \supp(x)} (\cp(a))_i$, and 
$J= \Arg\max\limits_{j \in [\supp(x)]^\c} (\cp(a))_j$.
\item[2)] Choose $S_I \subseteq I$ and $S_J \subseteq J$ such that $|S_I|=|S_J|=\min\{|I|,|J|\}$. 
Set $S=\supp(x)\cup S_J\setminus S_I$.
\item[3)] Set $y \in \Re^n$ with $y_S = \proj_{\Omega_S}(a_S)$ and $y_{S^\c}=0$.
\ei
{\bf Output:} $y$.

\gap

One can observe that if $0<\|x\|_0<n$, the output $y$ of $\cs(x,t)$ must satisfy 
$\supp(y) \neq \supp(x)$ and thus $y \neq x$. We next introduce some notations that will be used 
subsequently.  

Given any $x\in\Re^n$ with $0<\|x\|_0<n$ and $\T>0$, define 
\beqa 
& \gamma(t;x) =  \min\limits_{i \in \supp(x)} \left(\cp(x-t\nabla f(x))\right)_i - 
\max\limits_{j \in [\supp(x)]^\c} \left(\cp(x-t\nabla f(x))\right)_j \quad \forall t \ge 0,  \label{gt}  \\
& \beta(\T;x) \in \Arg\min\limits_{t\in [0,\T]} \gamma(t;x), \quad\quad \vartheta(\T;x) = \min\limits_{t\in [0,\T]} \gamma(t;x). \label{bT}
\eeqa
If $x=0$ or $\|x\|_0=n$, define $\beta(\T;x)=\T$ and $\vartheta(\T;x)=0$. In addition, 
if the optimization problem \eqref{bT} has multiple optimal solutions, $\beta(\T;x)$ is 
 chosen to be the largest one among them. As seen below, $\beta(\T;x)$ and $\vartheta(\T;x)$ 
can be evaluated efficiently.

To avoid triviality, assume $0<\|x\|_0<n$.  It follows from \eqref{px} and \eqref{gt} that 
\beq \label{gtx}
\gamma(t;x) =  \min\limits_{i \in \supp(x)} \phi_i(t;x) \quad\quad \forall t \ge 0,
\eeq
where 
\[
\phi_i(t;x) = \left(\cp(x-t\nabla f(x))\right)_i - \alpha t, \quad\quad 
\alpha = \max\limits_{j \in [\supp(x)]^\c} \left(\cp(-\nabla f(x))\right)_j.
\]
We now consider two separate cases as follows.

Case 1): $\Omega$ is nonnegative symmetric. In view of \eqref{px}, we see that in this case 
\[
\phi_i(t;x) = x_i- \left(\frac{\partial f}{\partial x_i}  + \alpha\right) t \quad\quad \forall i.
\]
This together with \eqref{gtx} implies that $\gamma(t;x)$ is concave with respect to $t$. Thus, 
the minimum value of $\gamma(t;x)$ for $t\in[0,\T]$ must be achieved at $0$ or $\T$. It then 
follows that $\beta(\T;x)$ and $\vartheta(\T;x)$ can be found by comparing $\gamma(0;x)$ and 
$\gamma(\T;x)$, which can be evaluated in $O(\|x\|_0)$ cost. 
 
Case 2): $\Omega$ is sign-free symmetric. In view of  \eqref{bT} and \eqref{gtx}, one can observe that
\[
\vartheta(\T;x) = \min\limits_{t\in [0,\T]} \left\{ \min\limits_{i \in \supp(x)} \phi_i(t;x) \right\} 
= \min\limits_{i \in \supp(x)} \left\{ \min\limits_{t\in [0,\T]} \phi_i(t;x) \right\}.
\]
By the definition of $\cp$, it is not hard to see that $\phi_i(\cdot;x)$ is a convex 
piecewise linear function of 
$t\in (-\infty,\infty)$. Therefore, for a given $x$, one can find a closed-form expression for 
\[
\phi^*_i(x) =  \min\limits_{t\in [0,\T]} \phi_i(t;x), \quad\quad t^*_i(x) \in  \Arg\min\limits_{t\in [0,\T]} \phi_i(t;x).
\] 
Moreover, their associated arithmetic operation cost is $O(1)$ for each $x$. 
Let $i^*\in \supp(x)$ be such that 
\[
 \phi^*_{i^*}(x) = \min\limits_{i \in \supp(x)} \phi^*_i(x).
\]
Then  we obtain $\vartheta(\T;x) =  \phi^*_{i^*}(x)$ and $\beta(\T;x) = t^*_{i^*}(x)$. Therefore, for a given $x$, 
$\vartheta(\T;x)$ and $\beta(\T;x)$ can be computed in $O(\|x\|_0)$ cost.

Combining the above two cases, we reach the following conclusion regarding $\beta(\T;x)$ and $\vartheta(\T;x)$.

\begin{proposition}
For any $x$ and $\T>0$, $\beta(\T;x)$ and $\vartheta(\T;x)$ can be computed in $O(\|x\|_0)$ cost. 
\end{proposition}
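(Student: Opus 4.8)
The plan is to handle the two degenerate configurations by definition and then to reduce, in each of the two symmetric cases, the evaluation of $\beta(\T;x)$ and $\vartheta(\T;x)$ to minimizing a handful of one-dimensional functions whose minimizers over $[0,\T]$ have closed forms. If $x=0$ or $\|x\|_0=n$, then $\beta(\T;x)$ and $\vartheta(\T;x)$ are assigned directly in the definition, so they are available at $O(1)$ cost and there is nothing to prove. I therefore assume $0<\|x\|_0<n$ throughout, which is exactly the regime in which the identity $\gamma(t;x)=\min_{i\in\supp(x)}\phi_i(t;x)$ of \eqref{gtx} applies.

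First, in the nonnegative symmetric case I would use that $\cp$ is the identity on $\Omega$, so that each $\phi_i(\cdot;x)$ is affine in $t$. Consequently $\gamma(\cdot;x)$, being a pointwise minimum of finitely many affine functions, is concave on $[0,\T]$, and a concave function attains its minimum over a compact interval at one of the endpoints. Hence $\vartheta(\T;x)=\min\{\gamma(0;x),\gamma(\T;x)\}$ and $\beta(\T;x)$ is recovered by comparing the two endpoint values (the tie resolved in favour of $\T$ by the largest-minimizer convention). Each of $\gamma(0;x)$ and $\gamma(\T;x)$ is a minimum over the $\|x\|_0$ indices in $\supp(x)$ together with the single scalar $\alpha$, so the whole computation costs $O(\|x\|_0)$.

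Next, in the sign-free symmetric case I would interchange the two minimizations, writing $\vartheta(\T;x)=\min_{i\in\supp(x)}\phi_i^*(x)$ with $\phi_i^*(x)=\min_{t\in[0,\T]}\phi_i(t;x)$. The key structural observation is that $\phi_i(t;x)=\bigl|x_i-t(\nabla f(x))_i\bigr|-\alpha t$ is an affine function plus the absolute value of an affine function, hence convex and piecewise linear with at most one breakpoint, located at $t=x_i/(\nabla f(x))_i$ when $(\nabla f(x))_i\neq0$. Its minimizer over $[0,\T]$ therefore lies among the $O(1)$ candidates obtained by intersecting $\{0,\T,x_i/(\nabla f(x))_i\}$ with $[0,\T]$, so $\phi_i^*(x)$ and a corresponding minimizer $t_i^*(x)$ are computed in $O(1)$ time per coordinate. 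Taking the minimum over the $\|x\|_0$ support indices yields $\vartheta(\T;x)=\phi_{i^*}^*(x)$ and $\beta(\T;x)=t_{i^*}^*(x)$ at total cost $O(\|x\|_0)$.

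Combining the degenerate case with Cases 1 and 2 gives the claimed $O(\|x\|_0)$ bound in all situations. The part I expect to require the most care is the one-dimensional analysis in the sign-free case: one must verify convexity of $\phi_i$, correctly locate its breakpoint, and correctly handle the boundary subcases $(\nabla f(x))_i=0$ and a breakpoint lying outside $[0,\T]$, in each of which the minimizer collapses to an endpoint. One must also check that the largest-minimizer tie-breaking rule is applied consistently, both when selecting between endpoints in Case 1 and when several coordinates attain the same $\phi_i^*(x)$ in Case 2. These are routine verifications once the piecewise-linear structure is made explicit, so I anticipate no essential difficulty beyond this bookkeeping.
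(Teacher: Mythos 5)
Your proposal is correct and follows essentially the same route as the paper: the same split into the nonnegative symmetric case (where $\gamma(\cdot;x)$ is a pointwise minimum of affine functions, hence concave, so the minimum over $[0,\T]$ occurs at an endpoint) and the sign-free symmetric case (where the two minimizations are interchanged and each $\phi_i(\cdot;x)$ is convex piecewise linear with a closed-form minimizer computable in $O(1)$). Your explicit identification of the breakpoint $t=x_i/(\nabla f(x))_i$ and the boundary subcases merely spells out details the paper leaves implicit.
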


We  are now ready to present an NPG method for solving problem \eqref{sparse-prob}.

\gap

\noindent
{\bf Nonmonotone projected gradient (NPG) method for \eqref{sparse-prob}}  \\ [5pt]
Let $0<t_{\min}<t_{\max}$, $\tau \in (0,1)$,  $\T\in (0, 1/L_f)$, $c_1 \in (0, 1/\T-L_f)$, $c_2>0$, $\eta>0$, 
and integers $N \ge 3$, $0\le M <N$, $0<\q<N$ be given. Choose an arbitrary $x^0 \in \cC_s \cap \Omega$ and set $k=0$.
\begin{itemize}
\item[1)] ({\bf coordinate swap}) If $\Mod(k,N)=0$, do 
\bi
\item[1a)] Compute $\bx^{k+1}=\cswap(x^k,\T)$. 
\item[1b)] If $\bx^{k+1} \neq x^k$, set $x^{k+1}=\bx^{k+1}$ and go to step 4). 
\item[1c)] If $\bx^{k+1} = x^k$, go to step 3). 
\ei
\item[2)] ({\bf change support}) If $\Mod(k,N)=\q$ and $\vartheta(\T;x^k)\le\eta$, do
\bi
\item[2a)] Compute 
\beqa 
\tx^{k+1} &\in& \proj_{\cC_s \cap \Omega}\left(x^k-\beta(\T;x^k)\nabla f(x^k)\right), \label{tx} \\ 
\hx^{k+1}&=& \cs\left(\tx^{k+1},\beta(\T;x^k)\right). \label{hx}
\eeqa
\item[2b)] If 
\beq \label{descent0}
f(\hx^{k+1}) \le f(\tx^{k+1})-\frac{c_1}{2}\|\hx^{k+1}-\tx^{k+1}\|^2
\eeq
holds, set $x^{k+1}=\hx^{k+1}$ and go to step 4).
\item[2c)] If $\beta(\T;x^k)>0$, set $x^{k+1}=\tx^{k+1}$ and go to step 4).
\item[2d)] If $\beta(\T;x^k)=0$, go to step 3).
\ei 
\item[3)] ({\bf projected gradient}) Choose $t^0_k \in [t_{\min}, t_{\max}]$. Set $\bt_k = t^0_k$.
\bi
\item[3a)] Solve the subproblem
\beq \label{subprob}
w \in \proj_{\cC_s \cap \Omega} (x^k-\bt_k\nabla f(x^k)),
\eeq
\item[3b)] If
\beq \label{descent}
f(w) \le \max\limits_{[k-M]_+ \le i \le k} f(x^i) - \frac{c_2}{2} \|w-x^k\|^2
\eeq
holds, set $x^{k+1}=w$, $t_k=\bt_k$, and go to step 4).
\item[3c)] Set $\bt_k \leftarrow \tau \bt_k$ and go to step 3a).
\ei
\item[4)]
Set $k \leftarrow k+1$, and go to step 1).
\end{itemize}
\noindent
{\bf end}

\gap

{\bf Remark:}
\begin{itemize}
\item[(i)]
When $M=0$, the sequence $\{f(x^k)\}$ is decreasing. Otherwise, it may increase at some 
iterations and thus the above method is generally a nonmonotone method.
\item[(ii)] A popular choice of $t^0_k$ is by  the following formula proposed by Barzilai and Borwein 
\cite{BaBo88}:
\[
t^0_k  = \left\{\ba{ll}
\min \left\{t_{\max } ,\max\left\{t_{\min},\frac{\|s^k\|^2}{|(s^k)^T y^k|}\right\}\right\}, &  \ \mbox{if} \ (s^k)^T y^k \neq 0, \\
t_{\max} & \ \mbox{otherwise}.
\ea\right.
\]
where $s^k  = x^k  - x^{k - 1}$, $y^k=\nabla f(x^k)-\nabla f(x^{k - 1})$.
\end{itemize}

\subsection{Convergence results of NPG} \label{conv-results}

In this subsection we study convergence properties of the NPG method proposed in Subsection \ref{NPG}. We first state that the inner termination criterion \eqref{descent} is satisfied in a certain number of inner iterations, whose proof is similar to \cite[Theorem 2.1]{XuLuXu15}.

\begin{theorem} \label{inner-convergence}
The inner termination criterion \eqref{descent} is satisfied after at most
\[
\max\left\{\left\lfloor -\frac{\log(L_f+c_2)+\log(t_{\max})}{\log \tau} +2\right\rfloor,1\right\}
\]
inner iterations, and moreover, 
\beq \label{tk}
 \min \left\{t_{\min},  \tau/(L_f+c_2)\right\}  \ \le\  t_k \ \le \  t_{\max},
\eeq 
where $t_k$ is defined in Step 2) of the NPG method.
\end{theorem}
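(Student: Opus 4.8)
The plan is to isolate the elementary stepsize threshold that forces the backtracking test \eqref{descent} to succeed, and then to count how many times the inner loop can shrink (by the factor $\tau$) the trial stepsize before that threshold is crossed.

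First I would establish the following sufficient condition: for any trial stepsize $t>0$ and any $w\in\proj_\cFr(x^k-t\nabla f(x^k))$, if $t\le 1/(L_f+c_2)$ then \eqref{descent} holds. The argument is the same one used to derive \eqref{f-monotone}. Since $x^k\in\cFr$ is feasible for the subproblem \eqref{subprob}, the defining property of the projection gives $\|w-(x^k-t\nabla f(x^k))\|^2\le\|t\nabla f(x^k)\|^2$; expanding the square on the left and cancelling the common term $t^2\|\nabla f(x^k)\|^2$ yields $\nabla f(x^k)^T(w-x^k)\le -\frac{1}{2t}\|w-x^k\|^2$. Feeding this into the descent inequality implied by \eqref{lipschitz}, namely $f(w)\le f(x^k)+\nabla f(x^k)^T(w-x^k)+\frac{L_f}{2}\|w-x^k\|^2$, produces $f(w)\le f(x^k)-\frac12\left(\frac1t-L_f\right)\|w-x^k\|^2$. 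Because $f(x^k)\le\max_{[k-M]_+\le i\le k}f(x^i)$, the test \eqref{descent} is satisfied as soon as $1/t-L_f\ge c_2$, that is, as soon as $t\le 1/(L_f+c_2)$.

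Next I would convert this threshold into the iteration bound. The inner loop begins with $\bt_k=t^0_k\le t_{\max}$ and, at each failed test, replaces $\bt_k$ by $\tau\bt_k$, so at the $j$th trial one has $\bt_k=\tau^{j-1}t^0_k\le\tau^{j-1}t_{\max}$. By the sufficient condition above, the test must succeed once $\tau^{j-1}t_{\max}\le 1/(L_f+c_2)$. Taking logarithms and using $\log\tau<0$ to reverse the inequality shows that the loop terminates at the first index $j$ with $j\ge 1-(\log(L_f+c_2)+\log t_{\max})/\log\tau$; rounding up and invoking the elementary inequality $\lceil x+1\rceil\le\lfloor x+2\rfloor$ gives the stated upper bound, while the observation that at least one trial is always performed accounts for the outer maximum with $1$.

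Finally I would derive the two-sided bound on $t_k$ by a short case analysis. The upper bound $t_k\le t_{\max}$ is immediate, since $\bt_k$ only decreases from its initial value $t^0_k\le t_{\max}$. For the lower bound, if the test succeeds at the first trial then $t_k=t^0_k\ge t_{\min}$; otherwise the immediately preceding trial value $t_k/\tau$ failed \eqref{descent}, so the contrapositive of the sufficient condition forces $t_k/\tau>1/(L_f+c_2)$, whence $t_k>\tau/(L_f+c_2)$. Combining the two cases gives $t_k\ge\min\{t_{\min},\tau/(L_f+c_2)\}$. The only delicate point I anticipate is the floor/ceiling bookkeeping needed to match the exact constant in the statement; all the analytic content sits in the one-line threshold $t\le 1/(L_f+c_2)$.
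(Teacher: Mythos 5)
Your proof is correct and is essentially the argument the paper has in mind: the paper omits the details, stating only that the proof is similar to \cite[Theorem 2.1]{XuLuXu15}, and that standard backtracking argument is precisely yours — derive $f(w)\le f(x^k)-\frac12\left(\frac1t-L_f\right)\|w-x^k\|^2$ from the projection inequality (valid since $x^k\in\cFr$ is feasible for \eqref{subprob} even though $\cFr$ is nonconvex) plus the descent lemma, conclude that \eqref{descent} holds whenever $\bt_k\le 1/(L_f+c_2)$ because $f(x^k)\le\max_{[k-M]_+\le i\le k}f(x^i)$, and then count the geometric backtracks and apply the contrapositive at the last failed trial to get \eqref{tk}. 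Your floor/ceiling bookkeeping via $\lceil x+1\rceil\le\lfloor x+2\rfloor$ and the max with $1$ for the case $t_{\max}(L_f+c_2)\le 1$ both check out, so nothing is missing.
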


In what follows, we study the convergence of the outer iterations of the NPG method. Throughout the 
rest of this subsection, we make the following assumption regarding $f$. 

\begin{assumption} \label{assump-f}
$f$ is bounded below in $\cC_s \cap \Omega$, and moreover it is uniformly continuous in the 
level set 
\[
\Omega_0 := \{x\in \cC_s \cap \Omega: f(x) \le f(x^0)\}.
\]  
\end{assumption}

We start by establishing a convergence result regarding the sequences 
$\{f(x^k)\}$ and $\{\|x^{k}-x^{k-1}\|\}$. A similar result was established in \cite[Lemma 4]{WrNoFi09} 
for a nonmonotone proximal gradient method for solving a class of optimization problems. 
Its proof substantially relies on the relation: 
\[
\phi(x^{k+1}) \le \max\limits_{[k-M]_+ \le i \le k} \phi(x^i) - \frac{c}{2} \|x^{k+1}-x^k\|^2 \quad\quad \forall k \ge 0
\]
for some constant $c>0$, where $\phi$ is the associated objective function 
of the optimization problem considered in \cite{WrNoFi09}. Notice that for our NPG method,  this type of inequality  
holds only for a subset of indices $k$. Therefore, the proof of \cite[Lemma 4]{WrNoFi09} is 
not directly applicable here and a new proof is required. To make our presentation smooth, we leave the proof 
of the following result in Subsection \ref{proof-main}. 

\begin{theorem} \label{converge-fxk}
Let $\{x^k\}$ be the sequence generated by the NPG method and 
\beq \label{cN}
\cN = \{k: x^{k+1} \ \mbox{is generated by step 2) or 3) of NPG}\}.
\eeq 
There hold:
\bi
\item[(i)] $\{f(x^k)\}$ converges as $k \to \infty$; 
\item[(ii)] $\{\|x^{k+1}-x^k\|\} \to 0$ as $k\in \cN \to \infty$.
\ei
\end{theorem}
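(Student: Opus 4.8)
The plan is to collapse all branches of NPG into a single nonmonotone sufficient-decrease inequality and then run a Grippo--Lampariello--Lucidi-style induction. First I would analyze one outer iteration branch by branch to show that, with $D_k:=\max_{[k-M]_+\le i\le k}f(x^i)$, every iteration obeys $f(x^{k+1})\le D_k$, while for $k\in\cN$ one in fact has
\[
f(x^{k+1})\le D_k-\frac{c}{2}\|x^{k+1}-x^k\|^2,\qquad c:=\min\{c_1/2,c_2\}>0 .
\]
For a step 3) iterate this is exactly the inner test \eqref{descent}. For step 2c) (and for the $\tx^{k+1}$ factor in step 2b)) I would mimic the computation in the proof of Theorem \ref{opt-cond-prop1}: the projection defining $\tx^{k+1}$ gives $\nabla f(x^k)^T(\tx^{k+1}-x^k)\le-\frac{1}{2\beta}\|\tx^{k+1}-x^k\|^2$ whenever $\beta:=\beta(\T;x^k)>0$, which together with \eqref{lipschitz} and $1/\beta\ge1/\T>L_f+c_1$ yields $f(\tx^{k+1})\le f(x^k)-\frac{c_1}{2}\|\tx^{k+1}-x^k\|^2$; when $\beta=0$ the projection of the feasible point $x^k$ onto $\cC_s\cap\Omega$ is $x^k$ itself, so $\tx^{k+1}=x^k$. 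For step 2b) I would then chain this with \eqref{descent0} and the elementary inequality $\|a\|^2+\|b\|^2\ge\tfrac12\|a+b\|^2$ to recover the displayed bound. For a step 1b) (coordinate-swap) iterate, $k\notin\cN$ and $\cswap$ returns $y\neq x$ only when $f(y)<f(x)$, so there I merely record the strict but norm-free decrease $f(x^{k+1})<f(x^k)\le D_k$.

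Next I would set up the backbone. Since $f(x^{k+1})\le D_k$ for every $k$, a one-line window comparison shows $\{D_k\}$ is non-increasing; Assumption \ref{assump-f} makes it bounded below, so $D_k\downarrow V^*$ for some $V^*$. Because $f(x^k)\le D_k\le D_0=f(x^0)$, all iterates lie in the level set $\Omega_0$, so the uniform continuity in Assumption \ref{assump-f} is available. Choosing $\ell(k)\in\{[k-M]_+,\dots,k\}$ with $f(x^{\ell(k)})=D_k$, I would prove by induction on $j\in\{0,1,\dots,M\}$ that $f(x^{\ell(k)-j})\to V^*$ as $k\to\infty$. The base case $j=0$ is immediate. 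For the inductive step I examine the iteration index $\ell(k)-j-1$ that produced $x^{\ell(k)-j}$ and split on whether it lies in $\cN$: if it does, the quadratic bound gives $\tfrac{c}{2}\|x^{\ell(k)-j}-x^{\ell(k)-j-1}\|^2\le D_{\ell(k)-j-1}-f(x^{\ell(k)-j})\to0$, and uniform continuity transfers the limit from $x^{\ell(k)-j}$ to $x^{\ell(k)-j-1}$; if it does not, it is a swap step with $f(x^{\ell(k)-j})<f(x^{\ell(k)-j-1})\le D_{\ell(k)-j-1}$, and a squeeze between $f(x^{\ell(k)-j})\to V^*$ from below and $D_{\ell(k)-j-1}\to V^*$ from above forces $f(x^{\ell(k)-j-1})\to V^*$ as well. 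Treating the two cases as sub-subsequences of an arbitrary subsequence then gives $f(x^{\ell(k)-j-1})\to V^*$ unconditionally.

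With the finite family $\{f(x^{\ell(k)-j})\}_{j=0}^{M}$ all tending to $V^*$, I finish part (i) by the standard relabelling: for every $k$ the reference index $\ell(k+M+1)$ lies in $\{k+1,\dots,k+M+1\}$, so $x^{k+1}=x^{\ell(k+M+1)-j}$ for some $j\in\{0,\dots,M\}$, whence $|f(x^{k+1})-V^*|\le\max_{0\le j\le M}|f(x^{\ell(k+M+1)-j})-V^*|\to0$. This yields $\{f(x^k)\}\to V^*$. Part (ii) is then immediate: for $k\in\cN$ the quadratic inequality gives $\tfrac{c}{2}\|x^{k+1}-x^k\|^2\le D_k-f(x^{k+1})$, and since both $D_k$ and $f(x^{k+1})$ converge to $V^*$, we conclude $\|x^{k+1}-x^k\|\to0$ along $\cN$.

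The main obstacle is precisely the coordinate-swap steps: they sit outside $\cN$ and provide no control on $\|x^{k+1}-x^k\|$, so the proof of \cite[Lemma 4]{WrNoFi09}, which assumes a uniform quadratic decrease at every iteration, does not apply directly. The device that rescues the induction is that a swap step still respects $f(x^{k+1})\le D_k$ while $\{D_k\}$ is monotone; this lets me replace the missing sufficient-decrease estimate at such indices by the monotonicity-plus-squeeze argument above, and it is also the reason part (ii) is asserted only along $\cN$.
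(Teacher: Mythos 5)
Your proof is correct, but it is organized quite differently from the paper's. Both arguments rest on the same two building blocks --- the sufficient-decrease inequality $f(x^{k+1}) \le \max_{[k-M]_+\le i\le k} f(x^i) - \frac{\sigma}{2}\|x^{k+1}-x^k\|^2$ for $k\in\cN$ (the paper's \eqref{descent1}, which you re-derive branch by branch with essentially the paper's constants) and the strict decrease $f(x^{k+1})<f(x^k)$ at swap steps --- but they diverge afterwards. The paper exploits the periodic schedule of NPG: since swap steps can only occur when $\Mod(k,N)=0$, it proves $f(x^m)\to\hat f$ separately along each residue class modulo $N$ (first $m=N(k-1)+1$ via the partition $\cK_j=\{k:\ell(Nk)=Nk-j\}$, then $m=Nk$ by splitting into the index sets $\cN_1,\cN_2$, then $m=Nk-j$ for $1\le j\le N-2$), relying on the fact that all iterates strictly inside a block are step 2)/3) iterates. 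You instead run a single Grippo--Lampariello--Lucidi-type induction on the window offset $j\in\{0,\dots,M\}$ behind the reference index $\ell(k)$, resolving at each backward step the dichotomy ``$\ell(k)-j-1\in\cN$'' (quadratic bound plus uniform continuity on $\Omega_0$) versus ``swap step'' (strict decrease squeezed between $f(x^{\ell(k)-j})\to V^*$ from below and $D_{\ell(k)-j-1}\to V^*$ from above), and finish with the standard relabelling $x^{k+1}=x^{\ell(k+M+1)-j}$, $j\in\{0,\dots,M\}$. Your squeeze device is the same trick the paper applies to its set $\cN_2$, but embedding it inside the induction makes your argument schedule-agnostic: it never invokes $N\ge 3$ or the location of the swap steps, only that every non-$\cN$ iteration strictly decreases $f$, so it would survive any interleaving of swaps and is noticeably shorter. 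What the paper's residue-class route buys in exchange is explicit convergence along each structured subsequence $\{f(x^{Nk-j})\}$, mirroring the design of the algorithm's cycle, at the cost of substantially more bookkeeping. One small point in your favor: you explicitly treat the corner case $\beta(\T;x^k)=0$ with \eqref{descent0} holding, noting $\tx^{k+1}=x^k$ so that \eqref{descent0} alone gives sufficient decrease, whereas the paper's Case 1) glosses over this by asserting $\beta(\T;x^k)\in(0,\T]$ (harmlessly, since \eqref{descent-tx} then holds trivially).
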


The following theorem shows that any accumulation point of $\{x^k\}$ generated by the NPG method is 
nearly a strong stationary point of problem \eqref{sparse-prob}, that is, it nearly satisfies the strong necessary 
optimality condition \eqref{opt-cond1}. Since its proof is quite lengthy and technically involved, we present it in 
Subsection \ref{proof-main} instead.

\begin{theorem} \label{lim-point}
Let $\{x^k\}$ be the sequence generated by the NPG method. Suppose that $x^*$ is an accumulation 
point of $\{x^k\}$. Then there hold:
\begin{itemize}
\item[(i)] if $\|x^*\|_0 <s$, there exists $\hatt \in [\min\{t_{\min}, \tau/(L_f+c_2)\}, t_{\max}]$ such that 
$x^*=\proj_\cFr(x^*-t\nabla f(x^*))$ for all $t\in [0,\hatt]$, that is, $x^*$ is the unique optimal solution 
to the problems
\[
\min\limits_{x\in\cFr} \|x-(x^*-t\nabla f(x^*))\|^2 \quad\quad \forall t\in [0,\hatt];
\]
\item[(ii)] if $\|x^*\|_0=s$, then $x^*=\proj_\cFr(x^*-t\nabla f(x^*))$ for all $t\in [0,\T]$, that is, 
$x^*$ is the unique optimal solution to the problems
\[
\min\limits_{x\in\cFr} \|x-(x^*-t\nabla f(x^*))\|^2 \quad\quad \forall t\in [0,\T];
\]
\item[(iii)] if $t_{\min}$, $\tau$, $c_2$ and $\T$ are chosen such that 
\beq \label{special-param}
\min\{t_{\min}, \tau/(L_f+c_2)\} \ \ge \ \T,
\eeq 
then $x^*=\proj_\cFr(x^*-t\nabla f(x^*))$ for all $t\in [0,\T]$;
\item[(iv)] if $\|x^*\|_0=s$ and $f$ is additionally convex in $\Omega$, then $x^*$ is a local optimal 
solution of problem \eqref{sparse-prob};
\item[(v)] if $\|x^*\|_0=s$ and $(\cp(x^*) )_i\neq (\cp(x^*))_j$ for all $i \neq j\in\supp(x^*)$,  then 
$x^*$ is a coordinatewise stationary point of problem \eqref{sparse-prob}.
\end{itemize}
\end{theorem}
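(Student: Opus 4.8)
The theorem bundles five conclusions about an accumulation point $x^*$ of the NPG sequence, so I would organize the proof around the structure of the algorithm's outer loop and the two key convergence facts already granted in Theorem \ref{converge-fxk}: that $\{f(x^k)\}$ converges and that $\|x^{k+1}-x^k\|\to 0$ along the index set $\cN$ where steps 2) or 3) fire. Let $\{x^k\}_{k\in K}$ be a subsequence converging to $x^*$. The first reduction I would make is to argue that, because steps 1) and 2d) only redirect control to step 3) without producing a ``new'' accepted point via the projection, one can extract a subsequence converging to $x^*$ along which $x^{k+1}$ is always generated by step 2) or 3), i.e.\ a subsequence inside $\cN$; this is what lets me invoke $\|x^{k+1}-x^k\|\to 0$. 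This bookkeeping about which step generates each iterate is exactly the kind of thing I expect to be fiddly, and I would handle it once up front so the five parts can each reuse it.

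For parts (i)--(iii), the target is $x^*=\proj_\cFr(x^*-t\nabla f(x^*))$ on an interval of $t$. The natural route is: along the good subsequence, $x^{k+1}\in\proj_\cFr(x^k-t_k\nabla f(x^k))$ (step 3) with stepsize $t_k$ bounded in $[\min\{t_{\min},\tau/(L_f+c_2)\},t_{\max}]$ by Theorem \ref{inner-convergence}), and since $\|x^{k+1}-x^k\|\to 0$ both $x^k$ and $x^{k+1}$ tend to $x^*$. Passing to the limit in the projection inclusion (using continuity of $\nabla f$ and closedness of $\proj_\cFr$) gives $x^*\in\proj_\cFr(x^*-\hatt\nabla f(x^*))$ for some limiting stepsize $\hatt$. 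To upgrade membership to equality I would invoke the uniqueness theorems from Section \ref{proj-set}: in the $\|x^*\|_0<s$ case Theorem \ref{unique-soln} applies directly and forces the projection to be the singleton $\{x^*\}$, yielding equality for all $t\in[0,\hatt]$ (the full interval following from the $t=0$ triviality plus a monotonicity/convexity argument on $\gamma(t;x^*)$ as in the discussion preceding the algorithm). For part (ii), with $\|x^*\|_0=s$, I would instead lean on the \emph{change-support} step: the condition $\vartheta(\T;x^k)\le\eta$ and the structure of $\cs$ are designed precisely so that, in the limit, $\gamma(t;x^*)>0$ on $[0,\T]$ (strict separation \eqref{a-ineq}), which is the hypothesis of Theorem \ref{unique-soln1}, again collapsing the projection to $\{x^*\}$ on $[0,\T]$. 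Part (iii) is then a corollary: the parameter condition \eqref{special-param} makes $\hatt\ge\T$, so part (i)'s interval already covers $[0,\T]$.

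For part (iv), under $\|x^*\|_0=s$ and convexity of $f$ on $\Omega$, the goal is local optimality. Here I would fix the support $I=\supp(x^*)$ with $|I|=s$: locally near $x^*$, any feasible competitor with the same support lies in the convex set $\Omega_I$, and part (ii) gives $x^*_I=\proj_{\Omega_I}(x^*_I-t(\nabla f(x^*))_I)$, which with convexity of $f$ is a genuine first-order optimality condition for minimizing $f$ over $\Omega_I$, hence $x^*$ minimizes $f$ over that face; competitors with a different support but still within a small ball must, since $\|x^*\|_0=s$ is maximal, either share the support or drop a coordinate, and the strict-separation / continuity argument rules out nearby improvement. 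For part (v), the extra hypothesis $(\cp(x^*))_i\neq(\cp(x^*))_j$ on $\supp(x^*)$ guarantees the index $i$ selected in \cswap{} stabilizes along the subsequence, so I would argue by contradiction: if $x^*$ violated the coordinatewise condition of Theorem \ref{thm-beck}, then \cswap{} fired infinitely often along iterations with $\Mod(k,N)=0$ and produced a strict, bounded-away-from-zero decrease in $f$, contradicting convergence of $\{f(x^k)\}$. The main obstacle throughout is the passage to the limit for the swap and change-support steps: one must show the discretely-chosen indices $i,j$ (and the sets $S_I,S_J,S$) and stepsizes converge or stabilize along the subsequence so that the limiting point inherits the separation and stationarity properties; the distinctness hypothesis in (v) and the $\eta$-threshold in step 2) are exactly the devices that make this limiting argument go through, and getting those continuity/stability claims airtight is where the real work lies.
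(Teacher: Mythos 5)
Your outline reproduces the paper's architecture faithfully: the subsequence bookkeeping that reduces everything to step-3 iterates is the paper's Lemma \ref{fixed-point} (though note the paper cannot simply "stay inside $\cN$" -- for iterates produced by step 1) or step 2) it must shift indices by one or two, with the shift depending on whether $\q=N-1$, to land on a step-3 iterate, which is why the hypothesis $N\ge 3$ appears); the uniqueness upgrade via Theorems \ref{unique-soln} and \ref{unique-soln1}, the fixed-support face argument for (iv) (Lemma \ref{loc-suff}; here, incidentally, no extra "strict-separation" argument is needed -- within a ball of radius $\min\{|x^*_i|: i\in\supp(x^*)\}$ a feasible point cannot drop a coordinate at all, since $\|x^*\|_0=s$ forces equal supports), and the stabilize-and-pass-to-the-limit treatment of the \cswap{} inequalities for (v) all match the paper. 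However, two essential steps are missing, and they are ideas rather than bookkeeping. First, you assert that the change-support machinery is "designed precisely so that, in the limit, $\gamma(t;x^*)>0$ on $[0,\T]$." That assertion is itself the hardest result in the paper (Lemma \ref{lem:vT}), and design intent is not a proof. The actual mechanism is a contradiction argument: along a subsequence with $\Mod(k,N)=\q$ and $\vartheta(\T;x^k)\le\eta$ one shows $\tx^{k+1}\to x^*$; since $\|x^*\|_0=s$, the output $\hx^{k+1}$ of $\cs$ has a support different from $x^{k+1}$, so the acceptance test \eqref{descent0} must have \emph{failed} for all large $k$. Assuming $\gamma(t^*;x^*)=0$ for the limiting stepsize $t^*$, one constructs the limit $\hx^*$ of $\hx^{k+1}$ and proves by an explicit distance computation (split into the nonnegative and sign-free symmetric cases, using $b_{S_I}=b_{S_J}$ and the sorting structure) that $\|\hx^*-a\|=\|x^*-a\|$ for $a=x^*-t^*\nabla f(x^*)$, so $\hx^*\ne x^*$ is a second projection of $a$; the descent estimate \eqref{fw} then shows \eqref{descent0} \emph{would} hold for large $k$ -- contradiction. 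Without this argument part (ii) has no proof.

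Second, even granting $\gamma(t;x^*)>0$ on $[0,\T]$, Theorem \ref{unique-soln1} only converts \emph{membership} $x^*\in\proj_\cFr(x^*-t\nabla f(x^*))$ into uniqueness; it cannot produce membership for $t\in(\hatt,\T]$ when the inherited stepsize satisfies $\hatt<\T$, which is possible since $\hatt$ is only known to lie in $[\min\{t_{\min},\tau/(L_f+c_2)\},t_{\max}]$. The paper fills this hole with Lemma \ref{soln-path}: membership at $t_1=\hatt$ yields, via Lemma \ref{proj-general}, the first-order condition $-\hatt\,[\nabla f(x^*)]_T\in\cN_{\Omega_T}(x^*_T)$ with $T=\supp(x^*)$, which is invariant under positive rescaling of the stepsize because the normal cone is a cone; combined with $\gamma(t_2;x^*)\ge 0$, Theorem \ref{opt-proj} then manufactures a projection of $x^*-t_2\nabla f(x^*)$ supported on $T$ that must coincide with $x^*$. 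Your substitute -- "a monotonicity/convexity argument on $\gamma(t;x^*)$" -- does not perform this transfer, and for part (i) the downward extension of membership from $t=\hatt$ to all of $[0,\hatt]$ likewise rests on a nontrivial fact (the paper invokes \cite[Theorem 5.2]{BeHa14}) that your outline leaves unproven. A small further slip: part (iii) is not a corollary of part (i) alone, since (i) assumes $\|x^*\|_0<s$; the paper deduces (iii) from (i) \emph{and} (ii), using \eqref{special-param} only to guarantee $\hatt\ge\T$ in the sub-maximal-support case.
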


%


\gap

Before ending this subsection, we will establish some stronger results than those given in Theorem 
\ref{lim-point}  under an additional assumption regarding  $\Omega$ that is stated below. 

\begin{assumption} \label{global-assump}
Given any $x\in \Omega$ with $\|x\|_0<s$,  if $v\in\Re^n$ satisfies 
\beq \label{vT1}
v_T \in \cN_{\Omega_T}(x_T) \quad\quad \forall T\in \cT_s(x),
\eeq 
then $v\in\cN_\Omega(x)$.
\end{assumption}

The following result shows that Assumption \ref{global-assump} holds for some widely used sets $\Omega$. 

\begin{proposition} \label{assump2-prop}
Suppose that $\cX_i$,  $i=1,\ldots,n$, are closed intervals in $\Re$, $a\in\Re^n$ with $a_i \neq 0$ for all $i$, 
$b\in\Re$, and $g:\Re^n_+ \to \Re$ is a smooth increasing convex. Let 
\[
\cB = \cX_1 \times \cdots \times \cX_n,   \quad\quad  \cC=\{x: a^Tx -b =0\}, \quad\quad \cQ = \{x: g(|x|)\le 0\}.
\]
 Suppose additionally that $g(0)<0$, and moreover, $[\nabla g(x)]_{\supp(x)} \neq 0$ for any $0 \neq x\in\Re^n_+$. 
 Then  Assumption \ref{global-assump} holds for $\Omega = \cB,  \ \cC, \  \cQ,  \  \cC\cap \Re^n_+,\  \cQ \cap \Re^n_+$, respectively.
\end{proposition}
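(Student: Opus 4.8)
The plan is to reduce Assumption~\ref{global-assump} to a coordinatewise patching argument driven by two elementary combinatorial facts, and then to verify it set by set using explicit normal-cone formulas. Since $\|x\|_0<s$ and $s\le n-1$, I first record: (a) every index $i\in\{1,\ldots,n\}$ belongs to some $T\in\cT_s(x)$ (complete $\supp(x)\cup\{i\}$ to a set of cardinality $s$); and (b) every $T\in\cT_s(x)$ contains $\supp(x)$, so any fixed $i_0\in\supp(x)$ lies in all of them. Fact (a) lets me recover information about each coordinate of $v$ from the slice hypotheses \eqref{vT1}, while fact (b) lets me show that the multiplier attached to each slice is independent of $T$. (When $\supp(x)=\emptyset$, which forces $0\in\Omega$, I instead tie the per-slice multipliers together through a chain of pairwise intersecting supports, available whenever $s\ge2$.)

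For the box $\cB$ the crux is that $\cB_T=\prod_{i\in T}\cX_i$, legitimate because $x_j=0\in\cX_j$ for $j\in T^\c$; hence $\cN_\cB(x)=\prod_i\cN_{\cX_i}(x_i)$ and $\cN_{\cB_T}(x_T)=\prod_{i\in T}\cN_{\cX_i}(x_i)$ both factor coordinatewise. Then \eqref{vT1} reads $v_i\in\cN_{\cX_i}(x_i)$ for all $i\in T$, and fact (a) promotes this to all $i$, giving $v\in\cN_\cB(x)$. For the hyperplane $\cC$ I would use $\cN_\cC(x)=\mathrm{span}(a)$ and $\cN_{\cC_T}(x_T)=\mathrm{span}(a_T)$ (the condition $a_i\ne0$ ensuring each slice is a genuine hyperplane), so \eqref{vT1} gives $v_T=\lambda_T a_T$; fact (b) together with $a_{i_0}\ne0$ forces $\lambda_T=v_{i_0}/a_{i_0}$ to equal a single constant $\lambda$ for every $T$, and fact (a) then yields $v_i=\lambda a_i$ for all $i$, i.e. $v\in\mathrm{span}(a)=\cN_\cC(x)$.

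For $\cQ=\{x:g(|x|)\le0\}$, which is convex since $g$ is coordinatewise nondecreasing and convex, I would split on $g(|x|)<0$ versus $g(|x|)=0$. In the interior case both $\cN_\cQ(x)$ and every $\cN_{\cQ_T}(x_T)$ reduce to $\{0\}$, so \eqref{vT1} forces $v=0$ by fact (a). On the boundary, the Slater point supplied by $g(0)<0$ lets me write $\cN_\cQ(x)=\{\mu\,\xi:\mu\ge0,\ \xi\in\partial(g(|\cdot|))(x)\}$, and likewise for the slices, where $\xi_i=g_i'(|x|)\,\sign(x_i)$ on $\supp(x)$ and $|\xi_i|\le g_i'(|x|)$ off $\supp(x)$. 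Because $g(|x|)=0>g(0)$ forces $x\ne0$, the hypothesis $[\nabla g(|x|)]_{\supp(x)}\ne0$ furnishes an index $i_0\in\supp(x)$ with $g_{i_0}'(|x|)\ne0$; fact (b) then pins $\mu_T=v_{i_0}\,\sign(x_{i_0})/g_{i_0}'(|x|)$ to a common $\mu\ge0$, and fact (a) propagates the inequalities $|v_i|\le\mu\,g_i'(|x|)$ to all coordinates off $\supp(x)$, giving $v\in\cN_\cQ(x)$.

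For the two intersections I would invoke the sum rule $\cN_{A\cap B}(x)=\cN_A(x)+\cN_B(x)$, valid here by polyhedrality of $\cC$ and $\Re^n_+$ and by the Slater-type condition ($g(0)<0$ yields a strictly positive near-origin point in $\mathrm{int}\,\cQ$) for $\cQ\cap\Re^n_+$, and use that slicing commutes with intersection, $(A\cap B)_T=A_T\cap B_T$. Each case then reduces to the same template: one multiplier ($\lambda$ or $\mu$) from the base constraint, rendered $T$-independent by the common coordinate $i_0\in\supp(x)$, plus an orthant part $w\in\cN_{\Re^n_+}(x)$ whose sign pattern ($w_i=0$ if $x_i>0$, $w_i\le0$ if $x_i=0$) is precisely what fact (a) reads off coordinate by coordinate. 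I expect the main obstacle to be the boundary case of $\cQ$ and $\cQ\cap\Re^n_+$: justifying the subdifferential formula at coordinates with $x_i=0$, verifying the constraint qualifications that license both the KKT representation and the sum rule, and confirming that the a priori $T$-dependent multipliers truly coincide once pinned by $i_0$---the step in which the two structural hypotheses $a_i\ne0$ and $[\nabla g(|x|)]_{\supp(x)}\ne0$ are indispensable.
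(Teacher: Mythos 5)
Your proposal is correct and follows essentially the same route as the paper's own proof: the same split into the interior case $g(|x|)<0$ versus the boundary case $g(|x|)=0$, the same Slater-based representation $\cN_{\cQ}(x)=\left\{\mu\,\xi:\mu\ge 0,\ \xi\in\nabla g(|x|)\circ(\partial|x_1|,\ldots,\partial|x_n|)^T\right\}$ (plus $\cN_{\Re^n_+}(x)$ in the orthant-intersected case), the same pinning of the a priori $T$-dependent multipliers through a support coordinate where $[\nabla g(|x|)]_{\supp(x)}\neq 0$, and the same completion of an arbitrary index to some $T\in\cT_s(x)$ to read off \eqref{vT1} coordinatewise. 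You in fact go slightly beyond the paper, which proves only $\Omega=\cQ\cap\Re^n_+$ and declares the remaining cases ``similar,'' whereas you spell out the box, hyperplane, and intersection cases explicitly; your parenthetical caveat that tying the multipliers together when $\supp(x)=\emptyset$ requires $s\ge 2$ is well spotted, since for $\cC$ with $b=0$ and $s=1$ the slices $\cC_{\{i\}}=\{0\}$ impose no constraint on $v$ at $x=0$ --- a degenerate configuration the paper's proof does not address either.
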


\begin{proof}
We only prove the case where $\Omega=\cQ \cap \Re^n_+$ since the other cases can be similarly proved. To this 
end, suppose that $\Omega=\cQ \cap \Re^n_+$, $x\in\Omega$ with $\|x_0\|<s$, and $v\in\Re^n$ satisfies 
\eqref{vT1}. We now prove $v\in\cN_\Omega(x)$ 
by considering two separate cases as follows.

Case 1): $g(|x|)<0$. It then follows that $\cN_{\Omega}(x) = \cN_{\Re^n_+}(x)$ and 
\[
\cN_{\Omega_T}(x_T) = \cN_{\Re^s_+}(x_T) \quad\quad \forall T\in\cT_s(x).
\]
Using this and the assumption that $v$ satisfies  \eqref{vT1}, one has  $v_T \in \cN_{\Re^s_+}(x_T)$ for
 all $T\in\cT_s(x)$, which implies $v\in \cN_{\Re^n_+}(x)=\cN_{\Omega}(x)$. 

Case 2): $g(|x|)=0$. This together with $g(0)<0$ implies $x \neq 0$. Since $g$ is a smooth increasing convex in  
$\Re^n_+$, one can show that 
\beq \label{pg}
\partial g(|x|) = \nabla g(|x|) \circ (\partial |x_1|, \ldots, \partial |x_n|)^T.
\eeq 
In addition, by $g(0)<0$ and \cite[Theorems 3.18, 3.20]{Rus06}, one has 
\beqa
\cN_\Omega(x) &=& \{\alpha u: \alpha \ge 0, u \in \partial g(|x|)\} + \cN_{\Re^n_+}(x), \label{cN1}\\ 
\cN_{\Omega_T}(x_T) &=& \{\alpha u_T: \alpha \ge 0, u \in \partial g(|x|)\} + \cN_{\Re^s_+}(x_T) 
\quad\quad \forall T\in\cT_s(x). \label{cN2}  
\eeqa
Recall that $0 <\|x\|_0<s$. Let $I=\supp(x)$. 
It follows that $I \neq \emptyset$, and moreover, for any $j \in I^\c$, there exists some $T_j \in\cT_s(x)$ such that $j\in T_j$.  
Since $v$ satisfies \eqref{vT1}, one can observe from \eqref{pg} and \eqref{cN2} that for any $j \in I^\c$, 
there exists some $\alpha_j \ge 0$, $h^j \in  \cN_{\Re^s_+}(x_{T_j})$ and $w^j\in(\partial |x_1|, \ldots, \partial |x_n|)^T$ such that 
\[
v_{T_j}=\alpha_j [\nabla g(|x|)]_{T_j} \circ w^j_{T_j} + h^j.
\]
Using this,  $I=\supp(x) \subset T_j$, $x_I>0$ and $x_{I^\c}=0$, we see that 
\beq \label{vTj}
v_I=\alpha_j [\nabla g(|x|)]_I, \quad v_j \in t_j(\nabla g(|x|))_j[-1,1] + q_j \quad\quad \forall j \in I^\c
\eeq 
for some $q_j \le 0$ with $j \in I^\c$.  
Since $x\neq 0$ and $I=\supp(x)$, we have from the assumption that $[\nabla g(|x|)]_I \neq 0$. This along with 
\eqref{vTj} implies that there exists some $\alpha\ge 0$ such that $\alpha_j=\alpha$ for all $ j \in I^\c$. It then follows 
from this, $x_{I^\c}=0$ and \eqref{vTj} that 
\[
v_I = \alpha [\nabla g(|x|)]_I, \quad\quad v_{I^\c} \in \alpha[\nabla g(|x|)]_{I^\c}[-1,1] + \cN_{\Re^{n-|I|}_+}\left(x_{I^\c}\right),
\]
which together with \eqref{pg}, \eqref{cN1} and $x_I>0$ implies that $v\in\cN_\Omega(x)$.
\end{proof}

\gap

As an immediate consequence of Proposition \ref{assump2-prop},  Assumption \ref{global-assump} holds for some sets $\Omega$ that were recently considered in \cite{BeHa14}. 

\begin{corollary} \label{special-sets}
Assumption \ref{global-assump} holds for $\Omega=\Re^n$, $\Re^n_+$, $\Delta$, $\Delta_+$, $\cB^p(0;r)$  
and $\cB^p_+(0;r)$, where 
\[
\ba{ll}
\Delta = \left\{x\in\Re^n: \sum^n_{i=1} x_i=1\right\}, \quad\quad & \Delta_+ = \Delta \cap \Re^n_+, \\ [5pt]
\cB^p(0;r) =\left\{x\in\Re^n: \|x\|^p_p \le r\right\}, \quad\quad & \cB^p_+(0;r) = \cB^p(0;r)\cap \Re^n_+
\ea
\]
for some $r>0$ and $p\ge 1$, and $\|x\|^p_p = \sum^n_{i=1} |x_i|^p$ for all $x\in\Re^n$.
\end{corollary}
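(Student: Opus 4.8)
The plan is to derive the corollary directly from Proposition~\ref{assump2-prop} by exhibiting each of the six sets as one of the five admissible forms $\cB$, $\cC$, $\cQ$, $\cC\cap\Re^n_+$, $\cQ\cap\Re^n_+$ treated there, and checking the corresponding hypotheses; once the matching is carried out, the conclusion is immediate.

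I would begin with the two box cases, which are the easiest. Choosing each interval $\cX_i=\Re$ yields $\Re^n=\cX_1\times\cdots\times\cX_n=\cB$, and choosing each $\cX_i=[0,\infty)$ yields $\Re^n_+=\cB$; since $\Re$ and $[0,\infty)$ are closed intervals, the $\cB$ part of Proposition~\ref{assump2-prop} gives Assumption~\ref{global-assump} for both of these sets. For the two simplex cases I would take $a=(1,\ldots,1)^T$ and $b=1$. The requirement $a_i\neq 0$ for all $i$ holds trivially, and then $\Delta=\{x:a^Tx-b=0\}=\cC$ while $\Delta_+=\cC\cap\Re^n_+$, so the $\cC$ and $\cC\cap\Re^n_+$ parts of Proposition~\ref{assump2-prop} apply.

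The only case demanding genuine verification is the pair of $\ell_p$-balls, and this is where I would concentrate the effort. I would set $g(y)=\sum_{i=1}^n y_i^p-r$ for $y\in\Re^n_+$, so that $g(|x|)=\|x\|_p^p-r$ and hence $\cB^p(0;r)=\{x:g(|x|)\le 0\}=\cQ$ and $\cB^p_+(0;r)=\cQ\cap\Re^n_+$. It then remains to check the four structural hypotheses on $g$: for $p\ge 1$ the map $y\mapsto y^p$ is smooth, nondecreasing and convex on $\Re_+$, so $g$ is a smooth increasing convex function; $g(0)=-r<0$ because $r>0$; and since $\nabla g(x)=(p\,x_1^{p-1},\ldots,p\,x_n^{p-1})$ is strictly positive in every coordinate of $\supp(x)$, we have $[\nabla g(x)]_{\supp(x)}\neq 0$ for all $0\neq x\in\Re^n_+$. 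With these verified, the $\cQ$ and $\cQ\cap\Re^n_+$ parts of Proposition~\ref{assump2-prop} finish the proof.

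I expect the main (and essentially only) subtlety to be the smoothness claim when $1<p<2$, where the second derivative of $y^p$ is unbounded near the origin; this is harmless here, because the proof of Proposition~\ref{assump2-prop} invokes only the gradient $\nabla g$ together with convexity and the subdifferential/normal-cone formulas, all of which require merely that $g$ be continuously differentiable, and this holds for every $p\ge 1$. In short, the corollary is pure bookkeeping on top of the proposition: the substantive content lives entirely in Proposition~\ref{assump2-prop}, and the task reduces to the routine identification of the data $(\cX_i)$, $(a,b)$, and $g$ for each listed set.
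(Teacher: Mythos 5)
Your proposal is correct and matches the paper's approach exactly: the paper states Corollary \ref{special-sets} as an immediate consequence of Proposition \ref{assump2-prop}, leaving the identifications implicit, and you have simply made them explicit ($\cX_i=\Re$ or $[0,\infty)$ for the boxes, $a=(1,\ldots,1)^T$, $b=1$ for the simplex cases, and $g(y)=\sum_{i=1}^n y_i^p - r$ for the $\ell_p$-balls) while correctly verifying $g(0)=-r<0$ and $[\nabla g(x)]_{\supp(x)}\neq 0$. Your remark on the smoothness of $y\mapsto y^p$ for $1<p<2$ is also right: the proof of Proposition \ref{assump2-prop} uses only $\nabla g$, convexity, and the normal-cone formulas, so continuous differentiability suffices.
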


We are ready to present some stronger results than those given in Theorem \ref{lim-point}  
under some additional assumptions. The proof of them is left in Subsection \ref{proof-main}.

\begin{theorem} \label{strong-converge}
Let $\{x^k\}$ be the sequence generated by the NPG method. Suppose that $x^*$ is an 
accumulation point of $\{x^k\}$ and  Assumption \ref{global-assump} holds. There hold:
\bi
\item[(i)]  $x^*=\proj_\cFr(x^*-t\nabla f(x^*))$ for all $t\in [0,\T]$, that is, 
$x^*$ is the unique optimal solution to the problems
\[
\min\limits_{x\in\cFr} \|x-(x^*-t\nabla f(x^*))\|^2 \quad\quad \forall t\in [0,\T];
\]
\item[(ii)] if $\|x^*\|_0<s$ and $f$ is additionally convex in $\Omega$, then 
$x^*\in \Arg\min\limits_{x\in\cFr} f(x)$, that is, $x^*$ is a global optimal solution of problem \eqref{sparse-prob};
\item[(iii)] if $\|x^*\|_0=s$ and $f$ is additionally convex in $\Omega$, then $x^*$ is a local optimal 
solution of problem \eqref{sparse-prob};
\item[(iv)] if $\|x^*\|_0=s$ and $(\cp(x^*) )_i\neq (\cp(x^*))_j$ for all $i \neq  j\in\supp(x^*)$, 
then $x^*$ is a coordinatewise stationary point. 
\ei
\end{theorem}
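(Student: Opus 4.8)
The plan is to dispatch the easy parts first and concentrate the real work on part (i) for the sub-critical case $\|x^*\|_0<s$, which is the only place where Assumption \ref{global-assump} is genuinely needed. Parts (iii) and (iv) coincide with parts (iv) and (v) of Theorem \ref{lim-point}, so they follow immediately from that result without any new argument. Likewise, in part (i) the case $\|x^*\|_0=s$ is exactly Theorem \ref{lim-point}(ii). Thus everything reduces to proving the identity $x^*=\proj_\cFr(x^*-t\nabla f(x^*))$ for all $t\in[0,\T]$ when $\|x^*\|_0<s$, together with the global-optimality claim (ii).

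For the sub-critical case of (i), I would first extract from Theorem \ref{lim-point}(i) that $x^*=\proj_\cFr(x^*-t\nabla f(x^*))$ holds on $[0,\hatt]$ for some $\hatt\ge\min\{t_{\min},\tau/(L_f+c_2)\}>0$. Fixing any $t\in(0,\hatt]$ and writing $a=x^*-t\nabla f(x^*)$, Lemma \ref{proj-general} gives $x^*_T=\proj_{\Omega_T}(a_T)$ for every $T\in\cT_s(x^*)$. The first-order optimality condition for this convex projection reads $a_T-x^*_T\in\cN_{\Omega_T}(x^*_T)$, i.e. $-t(\nabla f(x^*))_T\in\cN_{\Omega_T}(x^*_T)$; since a normal cone is invariant under multiplication by positive scalars, this is the same as $-(\nabla f(x^*))_T\in\cN_{\Omega_T}(x^*_T)$ for every $T\in\cT_s(x^*)$. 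This is precisely the hypothesis of Assumption \ref{global-assump} with $v=-\nabla f(x^*)$ and $x=x^*$ (legitimate because $\|x^*\|_0<s$), so the assumption yields the crucial lifted inclusion $-\nabla f(x^*)\in\cN_\Omega(x^*)$.

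With this global inclusion in hand the remainder is routine. For any $t\ge 0$ one has $-t\nabla f(x^*)\in\cN_\Omega(x^*)$, which is exactly the optimality condition characterizing $x^*=\proj_\Omega(x^*-t\nabla f(x^*))$; since $\Omega$ is convex this projection is the unique minimizer of $\|y-a\|^2$ over $y\in\Omega$. Because $\cFr\subseteq\Omega$ and $x^*\in\cFr$, the same $x^*$ also minimizes $\|y-a\|^2$ over the smaller set $\cFr$, so $x^*\in\proj_\cFr(a)$; Theorem \ref{unique-soln} (again using $\|x^*\|_0<s$) then promotes this to the singleton statement $x^*=\proj_\cFr(a)$, establishing (i) for all $t\in[0,\T]$. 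For part (ii), the inclusion $-\nabla f(x^*)\in\cN_\Omega(x^*)$ means $\nabla f(x^*)^T(z-x^*)\ge 0$ for every $z\in\Omega$; convexity of $f$ on $\Omega$ then gives $f(z)\ge f(x^*)+\nabla f(x^*)^T(z-x^*)\ge f(x^*)$ for all $z\in\Omega\supseteq\cFr$, so $x^*\in\Arg\min_{x\in\cFr}f(x)$.

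The main obstacle is the lifting step in the second paragraph. Theorem \ref{lim-point} only certifies optimality of $x^*$ on each $s$-dimensional face $\Omega_T$ separately, and only over a possibly short stepsize range $[0,\hatt]$, whereas part (i) demands a single $t$-independent projection identity over the full set $\cFr$. Assumption \ref{global-assump} is exactly the device that glues the face-wise normal-cone memberships $-(\nabla f(x^*))_T\in\cN_{\Omega_T}(x^*_T)$ into one global membership $-\nabla f(x^*)\in\cN_\Omega(x^*)$; once that is available the stepsize dependence disappears by scale-invariance of the cone, and the sparsity constraint becomes inactive precisely because $\|x^*\|_0<s$.
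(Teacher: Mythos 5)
Your proof is correct and follows essentially the same route as the paper: parts (iii), (iv) and the $\|x^*\|_0=s$ case of (i) are delegated to Theorem \ref{lim-point} exactly as in the paper, and your lifting argument for the sub-critical case—deriving $-[\nabla f(x^*)]_T\in\cN_{\Omega_T}(x^*_T)$ from Lemma \ref{proj-general}, invoking Assumption \ref{global-assump} to get $-\nabla f(x^*)\in\cN_\Omega(x^*)$, and then concluding uniqueness of the projection and global optimality under convexity—is precisely the content of the paper's Lemma \ref{opt-all-t}, which you have simply re-derived inline. The only cosmetic difference is that you promote $x^*\in\proj_\cFr(a)$ to a singleton via Theorem \ref{unique-soln}, whereas the paper inherits uniqueness directly from the convex projection onto $\Omega$; both are valid.
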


%
%
%

\subsection{Proof of main results} \label{proof-main}

In this subsection we present a proof for the main results, particularly, Theorems \ref{converge-fxk}, 
\ref{lim-point} and \ref{strong-converge}.  We start with the proof of Theorem \ref{converge-fxk}. 

{\bf Proof of Theorem \ref{converge-fxk}}. 
(i) Let $\cN$ be defined in \eqref{cN}. We first show that 
\beq \label{descent1}
f(x^{k+1}) \le \max\limits_{[k-M]_+ \le i \le k} f(x^i) - \frac{\sigma}{2} \|x^{k+1}-x^k\|^2 \quad\quad \forall k \in \cN
\eeq
for some $\sigma>0$. Indeed, one can observe that for every $k\in\cN$, $x^{k+1}$ is 
generated by step 2) or 3) of NPG. We now divide the proof of \eqref{descent1} 
into these separate cases. 

Case 1):  $x^{k+1}$ is generated by step 2). Then $x^{k+1}=\tx^{k+1}$ or $\hx^{k+1}$ and moreover 
$\beta(\T;x^k)\in (0,\T]$.  Using \eqref{tx} and a similar argument as for proving \eqref{fw}, one can  show that 
\beq \label{descent-tx}
f(\tx^{k+1}) \le  f(x^k) - \frac12\left(\frac{1}{\T}-L_f\right)\|\tx^{k+1}-x^k\|^2.
\eeq
Hence, if $x^{k+1}=\tx^{k+1}$, then \eqref{descent1} holds with 
$\sigma=\T^{-1}-L_f$. Moreover, such a $\sigma$ is positive due to $0<\T<1/L_f$. We next suppose 
$x^{k+1}=\hx^{k+1}$. Using this relation and the convexity of $\|\cdot\|^2$, one has
\beq \label{diffx}
\|x^{k+1}-x^k\|^2 =  \|\hx^{k+1}-x^k\|^2 \le 2 (\|\tx^{k+1}-x^k\|^2 + \|\hx^{k+1}-\tx^{k+1}\|^2).
\eeq
Summing up \eqref{descent0} and \eqref{descent-tx} and using \eqref{diffx}, we have 
\[
f(x^{k+1}) = f(\hx^{k+1}) \le f(x^k) - \frac14 \min\left\{\frac{1}{\T}-L_f,c_1\right\} \|x^{k+1}-x^k\|^2,
\]
and hence \eqref{descent1} holds with $\sigma=\min\{\T^{-1}-L_f,c_1\}/2$. 

Case 2): $x^{k+1}$ is generated by step 3).  It immediately follows from \eqref{descent} that \eqref{descent1} 
holds with $\sigma=c_2$. 

Combining the above two cases, we conclude that \eqref{descent1} holds for some $\sigma>0$. 

Let $\ell(k)$ be an integer such that $[k-M]_+ \le \ell(k) \le k$ and
\[
f(x^{\ell(k)}) = \max\limits_{[k-M]_+ \le i \le k} f(x^i).
\]
It follows from \eqref{descent1}  that $f(x^{k+1}) \le f(x^{\ell(k)})$ for every $k\in\cN$. Also, 
notice that for any $k\notin\cN$, $x^{k+1}$ must be generated by step 1b) and $f(x^{k+1})<f(x^k)$, which 
implies $f(x^{k+1}) \le f(x^{\ell(k)})$. By these facts, it is not hard to observe that 
$\{f(x^{\ell(k)})\}$ is non-increasing. In addition, recall from Assumption \ref{assump-f} that $f$ is 
bounded below in $\cC_s \cap \Omega$. Since $\{x^k\} \subseteq \cC_s \cap \Omega$, we know that $\{f(x^k)\}$ 
is bounded below and so is $f(x^{\ell(k)})$.  Hence, 
\beq \label{hatf}
\lim_{k \to \infty} f(x^{\ell(k)})=\hat f
\eeq 
for some $\hat f \in \Re$. In addition, it is not hard to observe $f(x^k) \le f(x^0)$ 
for all $k \ge 0$. Thus $\{x^k\} \subseteq \Omega_0$, where $\Omega_0$ is defined 
in Assumption \ref{assump-f}. 

We next show that 
\beq \label{fNk+1}
\lim\limits_{k \to \infty} f(x^{N(k-1)+1}) =  \hat f, 
\eeq
where $\hat f$ is given in \eqref{hatf}.  
Let 
\[
\cK_j = \{k \ge 1: \ell(k) = Nk-j \}, \quad\quad 0 \le j \le M.
\]
Notice that $Nk - M \le \ell(Nk) \le Nk$. This implies that $\{\cK_j: 0 \le j \le M\}$ forms a 
partition of all positive integers and hence $\bigcup^{M}_{j=0} \cK_j = \{1,2,\cdots\}$.
Let $0 \le j \le M$ be arbitrarily chosen such that $\cK_j$ is an infinite set. One can show that 
\beq \label{f-limit}
\lim\limits_{k\in\cK_j \to \infty} f(x^{\ell(Nk)-n_j}) = \hat f,
\eeq 
where $n_j =N-1-j$. Indeed, due to $0 \le j \le M \le N-1$, we have $n_j \ge 0$. Also, for 
every $k\in\cK_j$, we know that $\ell(Nk)-n_j = N(k-1)+1$. It then follows that 
\[
N(k-1)+1 \le \ell(Nk) - i \le Nk \quad\quad \forall 0 \le i \le n_j,  k\in\cK_j.
\] 
Thus for every $0 \le i < n_j$ and $k\in\cK_j$, we have 
$
1 \le \Mod(\ell(Nk)-i-1,N) \le N-1.
$
It follows that $x^{\ell(Nk)-i}$ must be generated by step 2) or 3) of 
NPG. This together with \eqref{descent1} implies that 
\beq \label{descent-f}
f(x^{\ell(Nk)-i}) \le f(x^{\ell(\ell(Nk)-i-1)}) - \frac{\sigma}{2} \|x^{\ell(Nk)-i}-x^{\ell(Nk)-i-1}\|^2  \quad 
\forall 0 \le i < n_j,  k\in\cK_j.
\eeq 
Letting $i=0$ in \eqref{descent-f}, one has 
\[
f(x^{\ell(Nk)}) \le f(x^{\ell(\ell(Nk)-1)}) - \frac{\sigma}{2} \|x^{\ell(Nk)}-x^{\ell(Nk)-1}\|^2  \quad 
\forall k\in\cK_j.
\]
Using this relation and \eqref{hatf}, we have 
$
\lim_{k\in\cK_j \to \infty} \|x^{\ell(Nk)}-x^{\ell(Nk)-1}\| = 0.
$
By this, \eqref{hatf}, $\{x^k\} \subseteq \Omega_0$ and the uniform continuity of $f$ in $\Omega_0$, one has 
\[
\lim\limits_{k\in\cK_j \to \infty} f(x^{\ell(Nk)-1}) = \lim\limits_{k\in\cK_j \to \infty} f(x^{\ell(Nk)}) = \hat f.
\]
Using this result and repeating the above arguments recursively for $i=1,\ldots, n_j-1$, 
we can conclude that \eqref{f-limit} holds, which, together with the fact that 
$\ell(Nk)-n_j = N(k-1)+1$ for every $k\in\cK_j$, implies that 
$
\lim_{k\in\cK_j \to \infty} f(x^{N(k-1)+1}) =  \hat f.
$ 
In view of this and $\bigcup^{M}_{j=0} \cK_j = \{1,2,\cdots\}$, one can see that \eqref{fNk+1} holds as desired.

In what follows, we show that 
\beq \label{fNk}
\lim\limits_{k \to \infty} f(x^{Nk}) =  \hat f.
\eeq
For convenience, let 
\[
\ba{l}
\cN_1 = \{k: x^{Nk+1} \ \mbox{is generated by step 2) or 3) of NPG}\}, \\ [4pt]
\cN_2 = \{k: x^{Nk+1} \ \mbox{is generated by step 1) of NPG}\}.
\ea
\]
Clearly, at least one of them is an infinite set. We first suppose that $\cN_1$ is an infinite set. 
It follows from \eqref{descent1} and the definition of $\cN_1$ that
\[
f(x^{Nk+1}) \le f(x^{\ell(Nk)}) - \frac{\sigma}{2} \|x^{Nk+1}-x^{Nk}\|^2  \quad\quad \forall k\in \cN_1,
\]  
which together with \eqref{hatf} and \eqref{fNk+1} implies 
$
\lim_{k\in\cN_1 \to \infty}\|x^{Nk+1}-x^{Nk}\| = 0.
$
Using this, \eqref{fNk+1}, $\{x^k\} \subseteq \Omega_0$ and the uniform 
continuity of $f$ in $\Omega_0$, one has 
\beq \label{fN1}
\lim\limits_{k\in\cN_1 \to \infty} f(x^{Nk}) =  \hat f.
\eeq
We now suppose that $\cN_2$ is an infinite set. By the definition of $\cN_2$, we know that 
\[
f(x^{Nk+1}) < f(x^{Nk}) \quad\quad  \forall k\in \cN_2.
\]
It then follows that 
\[
f(x^{Nk+1}) < f(x^{Nk}) \le f(x^{\ell(Nk)}) \quad\quad  \forall k\in \cN_2.
\]
This together with \eqref{hatf} and \eqref{fNk+1} leads to 
$
\lim_{k\in\cN_2 \to \infty} f(x^{Nk}) =  \hat f.
$
Combining this relation and \eqref{fN1}, one can conclude that \eqref{fNk} holds.

Finally we show that 
\beq \label{fNkj}
\lim\limits_{k \to \infty} f(x^{Nk-j}) =  \hat f \quad\quad \forall 1 \le j \le N-2.
\eeq 
One can observe that 
\[
N(k-1) +2 \le  Nk-j \le Nk-1 \quad\quad \forall 1 \le j \le N-2.
\]
Hence, $2 \le \Mod(Nk-j,N) \le N-1$ for every $1 \le j \le N-2$. It follows that 
$\{x^{Nk-j+1}\}$ is generated by step 2) or 3) of NPG for all $1 \le j \le N-2$. This together 
with \eqref{descent1} implies that  
\beq \label{descent-f1}
f(x^{Nk-j+1}) \le f(x^{\ell(Nk-j)}) - \frac{\sigma}{2} \|x^{Nk-j+1}-x^{Nk-j}\|^2  \quad\quad 
\forall 1 \le j \le N-2.
\eeq 
Letting $j=1$ and using \eqref{descent-f1}, one has 
\[
f(x^{Nk}) \le f(x^{\ell(Nk-1)}) - \frac{\sigma}{2} \|x^{Nk}-x^{Nk-1}\|^2,
\]
which together with \eqref{hatf} and \eqref{fNk} implies $\|x^{Nk}-x^{Nk-1}\| \to 0$ as $k \to \infty$. 
By this, \eqref{fNk}, $\{x^k\} \subseteq \Omega_0$ and the uniform continuity of $f$ in $\Omega_0$,  we conclude that
$
\lim_{k \to \infty} f(x^{Nk-1}) =  \hat f.
$
 Using this result and repeating the above arguments recursively for $j=2,\ldots, N-2$, we can see 
that \eqref{fNkj} holds.
 
Combining \eqref{fNk+1}, \eqref{fNk} and \eqref{fNkj}, we conclude that statement (i) of this theorem 
holds.

(ii) We now prove statement (ii) of this theorem. It follows from \eqref{descent1} that 
\[
f(x^{k+1}) \le f(x^{\ell(k)}) - \frac{\sigma}{2} \|x^{k+1}-x^k\|^2 \quad\quad \forall k\in \cN.
\] 
which together with \eqref{hatf} and statement (i) of this theorem immediately implies statement (ii) holds.
{\hfill\hbox{\vrule width1.0ex height1.0ex}\parfillskip 0pt\par} 

\gap

We next turn to prove Theorems \ref{lim-point} and \ref{strong-converge}.  Before proceeding,  
we establish several lemmas as follows.

\begin{lemma} \label{fixed-point}
Let $\{x^k\}$ be the sequence generated by the NPG method and $x^*$ an accumulation point of 
$\{x^k\}$.  There holds:
\beq \label{opt-eq1}
x^* \in \proj_\cFr \left(x^*-\hatt \nabla f(x^*)\right) 
\eeq 
for some $\hatt \in [\min\{t_{\min}, \tau/(L_f+c_2)\}, t_{\max}]$.
\end{lemma}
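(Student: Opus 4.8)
The plan is to exploit the fact that $x^*$ is an accumulation point, so there is a subsequence $\{x^k\}_{k\in K}$ converging to $x^*$, and to extract from this subsequence a suitable projected-gradient step whose fixed-point relation passes to the limit. First I would observe that since the index set $\cN$ of ``productive'' iterations (those generated by step 2) or 3)) is infinite, and since along $K$ we have $x^k\to x^*$, I can assume after refining $K$ that every $k\in K$ lies in $\cN$ and moreover that every $x^{k+1}$ is produced by the same step of the algorithm --- either always step 3) (the plain projected-gradient step) or always step 2). The natural route is to focus on the projected-gradient step 3), because that is the step whose stepsizes $t_k$ are controlled by Theorem~\ref{inner-convergence} to lie in the compact interval $[\min\{t_{\min},\tau/(L_f+c_2)\},t_{\max}]$, so a further subsequence gives $t_k\to\hatt$ for some $\hatt$ in that interval.

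The key steps, in order, are as follows. After reducing to a subsequence $K\subseteq\cN$ on which $x^k\to x^*$, I would use Theorem~\ref{converge-fxk}(ii), which gives $\|x^{k+1}-x^k\|\to 0$ as $k\in\cN\to\infty$; hence $x^{k+1}\to x^*$ as well. Next, along $K$ the step-3 iterate satisfies the defining projection relation
\[
x^{k+1}\in\proj_\cFr\!\left(x^k-t_k\nabla f(x^k)\right),
\]
with $t_k\in[\min\{t_{\min},\tau/(L_f+c_2)\},t_{\max}]$ by \eqref{tk}. Passing to a further subsequence so that $t_k\to\hatt$ in this compact interval, I would then take the limit in the projection inclusion. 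Concretely, membership $x^{k+1}\in\proj_\cFr(a^k)$ with $a^k:=x^k-t_k\nabla f(x^k)$ means $\|x^{k+1}-a^k\|\le\|y-a^k\|$ for all $y\in\cFr$; since $a^k\to a^*:=x^*-\hatt\nabla f(x^*)$ (using continuity of $\nabla f$ and $t_k\to\hatt$) and $x^{k+1}\to x^*$, taking limits in this inequality yields $\|x^*-a^*\|\le\|y-a^*\|$ for all $y\in\cFr$, i.e. \eqref{opt-eq1}. The cases where the subsequence sits in step 2) would be handled analogously, noting that the $\tx^{k+1}$ and $\hx^{k+1}$ iterates there are themselves built from projections at stepsize $\beta(\T;x^k)\in[0,\T]$, again a compact range, so the same limiting argument applies.

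The main obstacle I anticipate is the closedness/continuity issue for the set-valued projection onto the \emph{nonconvex} set $\cFr$: unlike the convex case, $\proj_\cFr$ is not single-valued and not continuous, so I cannot simply ``pass to the limit'' in a functional identity. The clean way around this is precisely to argue at the level of the defining \emph{inequality} $\|x^{k+1}-a^k\|\le\|y-a^k\|$ rather than the inclusion itself, because this scalar inequality involves only the continuous map $a\mapsto\|y-a\|$ and survives the limit for every fixed $y\in\cFr$; this circumvents any need for continuity of the projection operator. A secondary technical point is ensuring that $\cFr$ is closed (so that the limit $x^*$ remains feasible and the limiting inequality genuinely characterizes membership in $\proj_\cFr(a^*)$): this follows since $\cC_s$ is closed and $\Omega$ is closed by Assumption~\ref{assump-omega}, and since $x^k\in\cFr$ for all $k$ gives $x^*\in\cFr$. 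Finally, I would make sure the stepsize limit $\hatt$ is genuinely pinned inside $[\min\{t_{\min},\tau/(L_f+c_2)\},t_{\max}]$ rather than at a degenerate endpoint; the uniform bounds \eqref{tk} from Theorem~\ref{inner-convergence} guarantee this, so the claimed range for $\hatt$ is exactly the compact interval on which the accumulation of $\{t_k\}$ must land.
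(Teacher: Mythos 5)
Your core limiting argument --- passing to the limit in the scalar inequality $\|x^{k+1}-a^k\| \le \|y-a^k\|$ for each fixed $y\in\cFr$, with $t_k$ confined to the compact interval from \eqref{tk} and $\|x^{k+1}-x^k\|\to 0$ supplied by Theorem \ref{converge-fxk}(ii) --- is exactly the paper's treatment of the step-3) case, and your way of sidestepping the discontinuity of the set-valued projection (arguing on the defining inequality rather than the inclusion) is the right one. But the reduction to that case has two genuine gaps. First, you cannot ``assume after refining $K$ that every $k\in K$ lies in $\cN$'': the subsequence along which $x^k\to x^*$ is given, not chosen, and it may consist entirely of indices with $\Mod(k,N)=0$ at which step 1) (coordinate swap) fired, so that $K\cap\cN$ is finite. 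That $\cN$ itself is infinite is irrelevant, since the accumulation happens along $K$; and you never treat the step-1) case at all. Second, your ``analogous'' handling of step 2) fails on two counts: the stepsize there is $\beta(\T;x^k)\in[0,\T]$, which has \emph{no positive lower bound} (the algorithm even has branch 2d) for $\beta(\T;x^k)=0$), so its limit can be $0$, in which case the limiting inclusion degenerates to the vacuous $x^*\in\proj_\cFr(x^*)$ and, worse, the limit stepsize need not lie in the claimed interval $[\min\{t_{\min},\tau/(L_f+c_2)\},t_{\max}]$; moreover, when $x^{k+1}=\hx^{k+1}$ the iterate is the output of $\cs$, i.e.\ a projection onto $\Omega_S$ for a \emph{swapped} support $S$, not a projection onto $\cFr$, so there is no inequality $\|\hx^{k+1}-a^k\|\le\|y-a^k\|$ valid for all $y\in\cFr$ to pass to the limit.

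The paper closes both gaps with one idea your proposal is missing: exploit the periodic $\Mod(\cdot,N)$ structure of NPG to relocate the argument to a \emph{nearby} step-3) iterate. If $x^{k+1}$ is generated by step 1), then $\Mod(k,N)=0$, so (using $N\ge 3$) both $x^{k-1}$ and $x^k$ are generated by step 2) or 3); Theorem \ref{converge-fxk}(ii) gives $\|x^k-x^{k-1}\|\to 0$ and $\|x^{k-1}-x^{k-2}\|\to 0$, hence $x^{k-1},x^{k-2}\to x^*$ as well, and depending on whether $\q=N-1$, one of $x^k$ or $x^{k-1}$ is a step-3) iterate to which your Case-1 limiting argument applies verbatim. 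Similarly, if $x^{k+1}$ comes from step 2), then $\Mod(k,N)=\q$, and either $x^{k+2}$ (if $\q\neq N-1$) or $x^k$ (if $\q=N-1$) is a step-3) iterate, with convergence to $x^*$ again transferred by Theorem \ref{converge-fxk}(ii). This relocation is precisely what pins $\hatt$ inside $[\min\{t_{\min},\tau/(L_f+c_2)\},t_{\max}]$, since only step 3) carries the stepsize bound \eqref{tk}.
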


\begin{proof} 
 Since $x^*$ is an accumulation point of $\{x^k\}$ and $\{x^k\}\subseteq \cC_s \cap \Omega$,  one can 
observe that $x^*\in\cFr$ and moreover there exists a subsequence $\cal K$ such that 
$\{x^k\}_\cK \to x^*$. We now divide the proof of \eqref{opt-eq1} into three cases 
as follows.

Case 1): $\{x^{k+1}\}_\cK$ consists of infinite many $x^{k+1}$ that are generated by step 3) of the 
NPG method. Considering a subsequence if necessary, we assume for convenience that $\{x^{k+1}\}_\cK$ is 
generated by step 3) of NPG.  It follows from \eqref{subprob} with $\bar t_k=t_k$ that 
\[
x^{k+1} \in \Arg\min_{x \in \cFr} \left\{\left\|x-\left(x^k-t_k\nabla f(x^k)\right)\right\|^2\right\} 
\quad\quad \forall k \in\cK,
\]
which implies that for all $k\in\cK$ and $x \in \cFr$, 
\beq \label{optcond-tck}
\left\|x-\left(x^k-t_k \nabla f(x^k)\right)\right\|^2 \ge \left\|x^{k+1}-\left(x^k-t_k\nabla f(x^k)\right)\right\|^2.
\eeq
We know from \eqref{tk} that $t_k \in [\min\{t_{\min}, \tau/(L_f+c_2)\}, t_{\max}]$ for all $k\in\cK$. 
Considering a subsequence of $\cK$ if necessary, we assume without loss of generality that 
$\{t_k\}_\cK \to \hatt$ for some $\hatt \in [\min\{t_{\min}, \tau/(L_f+c_2)\}, t_{\max}]$. Notice that $\cK \subseteq \cN$, 
where $\cN$ is given in \eqref{cN}. It thus follows from Theorem \ref{converge-fxk} that $\|x^{k+1}-x^k\| \to 0$ 
as $k\in\cK \to \infty$. Using this relation, 
$\{x^k\}_\cK \to x^*$, $\{t_k\}_{\cK} \to \hatt$ and taking limits on both sides of \eqref{optcond-tck} 
as $k\in \cK \to \infty$, we obtain that 
\[
\left\|x-\left(x^*-\hatt\nabla f(x^*)\right)\right\|^2 \ge \hatt^2\|\nabla f(x^*)\|^2  \quad\quad 
\forall x \in \cFr.
\]
This together with $x^*\in\cFr$ implies that \eqref{opt-eq1} holds. 

Case 2):  $\{x^{k+1}\}_\cK$ consists of  infinite many $x^{k+1}$ that are generated by step 1) of the NPG method. 
Without loss of generality, we assume for convenience that $\{x^{k+1}\}_\cK$ is generated by step 1) of NPG. It then 
follows from NPG that $\Mod(k,N)=0$ for all $k\in\cK$. Hence, we have $\Mod(k-2,N)=N-2$ and $\Mod(k-1,N)=N-1$ for 
every $k\in\cK$, which together with $N\ge 3$ implies that  $\{x^{k-1}\}_\cK$ and $\{x^k\}_\cK$ are generated by step 2) 
or 3) of NPG. By Theorem \ref{converge-fxk}, we then have $\|x^{k-1}-x^{k-2}\| \to 0$ and $\|x^k-x^{k-1}\| \to 0$ as 
$k\in\cK \to \infty$. Using this relation and $\{x^k\}_\cK \to x^*$, we have $\{x^{k-2}\}_\cK \to x^*$ and $\{x^{k-1}\}_\cK \to x^*$. 
We now divide the rest of the proof of this case into two separate subcases as follows.

Subcase 2a): $q \neq N-1$. This together with $N\ge 3$ and $\Mod(k-1,N)=N-1$ for all $k\in\cK$ implies that $0< \Mod(k-1,N)\neq q$ 
for every $k\in\cK$. Hence, $\{x^k\}_\cK$ must be generated by step 3) of NPG.  Using this, $\{x^{k-1}\}_\cK \to x^*$ and the same 
argument as in Case 1) with $\cK$ and $k$ replaced by $\cK-1$ and $k-1$, respectively, one can conclude that \eqref{opt-eq1} holds. 

Subcase 2b): $q=N-1$. It along with $N\ge 3$ and $\Mod(k-2,N)=N-2$ for all $k\in\cK$ implies that $0<\Mod(k-2,N) \neq q$ for every $k\in\cK$. 
Thus $\{x^{k-1}\}_\cK$ must be generated by step 3) of NPG.  By this, $\{x^{k-2}\}_\cK \to x^*$ and the same argument as in Case 1) with 
$\cK$ and $k$ replaced by $\cK-2$ and $k-2$, respectively, one can see that \eqref{opt-eq1} holds. 

Case 3): $\{x^{k+1}\}_\cK$ consists of infinite many $x^{k+1}$ that are generated by step 2) of the NPG method. Without loss of generality, 
we assume for convenience that $\{x^{k+1}\}_\cK$ is generated by step 2) of NPG, which implies that $\Mod(k,N)=q$ for all $k\in\cK$. Also, 
using this and Theorem \ref{converge-fxk},  we have $\|x^{k+1}-x^k\| \to 0$ as $k\in\cK \to \infty$. This together with $\{x^k\}_\cK \to x^*$ 
yields $\{x^{k+1}\}_\cK \to x^*$. 
We now divide the proof of this case into two separate subcases as follows.

Subcase 3a): $q \neq N-1$.  It together with $\Mod(k,N)=q$ for all $k\in\cK$ 
implies that $0<\Mod(k+1,N)=q+1 \neq q$ for every $k\in\cK$. Hence, 
$\{x^{k+2}\}_\cK$ must be generated by step 3) of NPG. Using this, $\{x^{k+1}\}_\cK \to x^*$ and the same argument as in Case 1) with 
$\cK$ and $k$ replaced by $\cK+1$ and $k+1$, respectively, one can see that \eqref{opt-eq1} holds. 

Subcase 3b): $q=N-1$. This along with $N\ge 3$ and $\Mod(k,N)=q$ for all $k\in\cK$ implies that $0<\Mod(k-1,N)=q-1 \neq q$ for 
every $k\in\cK$. Thus $\{x^k\}_\cK$ must be generated by step 3) of NPG. The rest of the proof of this subcase is the same as that 
of Subcase 2a) above.
\end{proof}

\gap

\begin{lemma} \label{lem:vT}
Let $\{x^k\}$ be the sequence generated by the NPG method and $x^*$ an accumulation point of 
$\{x^k\}$. If $\|x^*\|_0=s$, then there holds:
\beq \label{vT}
\vartheta(\T;x^*)>0,
\eeq
where $\vartheta(\cdot;\cdot)$ is defined in \eqref{bT}.
\end{lemma}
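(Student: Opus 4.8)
The plan is to argue by contradiction: assume $\vartheta(\T;x^*)\le 0$ and show this is incompatible with $x^*$ being an accumulation point. First I would fix the limiting geometry. Choose $\cK$ with $\{x^k\}_\cK\to x^*$. Since $\|x^*\|_0=s$, the $s$ nonzero coordinates of $x^*$ stay nonzero along the tail while $\|x^k\|_0\le s$, so $\supp(x^k)=\supp(x^*)=:I$ with $|I|=s$ for large $k\in\cK$. With the support frozen, $\gamma(t;x^k)\to\gamma(t;x^*)$ uniformly on $[0,\T]$ (a min/max of finitely many continuous functions), hence $\vartheta(\T;x^k)\to\vartheta(\T;x^*)$ and, along a further subsequence, $\beta(\T;x^k)\to t^*$ for some minimizer $t^*=\beta(\T;x^*)$ of $\gamma(\cdot;x^*)$. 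Because $\gamma(0;x^*)=\min_{i\in I}(\cp(x^*))_i>0$ (as $(\cp(x^*))_i>0$ on $I$ and vanishes off $I$), the hypothesis $\gamma(t^*;x^*)=\vartheta(\T;x^*)\le 0$ forces $t^*\in(0,\T]$. Writing $a^*=x^*-t^*\nabla f(x^*)$, the inequality $\gamma(t^*;x^*)\le 0$ yields $i\in I$, $j\in I^\c$ with $(\cp(a^*))_i\le(\cp(a^*))_j$; by Theorem \ref{opt-proj}, using a sorting of $\cp(a^*)$ that ranks $j$ above $i$, there is $\tx^*\in\proj_\cFr(a^*)$ with $j\in\supp(\tx^*)$ and $i\notin\supp(\tx^*)$, so $\tx^*\neq x^*$. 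Since $\tx^*\in\proj_\cFr(x^*-t^*\nabla f(x^*))$, $x^*\in\cFr$ and $t^*\le\T<1/L_f$, the computation \eqref{fw} (with stepsize $t^*$) gives $f(\tx^*)<f(x^*)$. Finally, $f(x^*)=\hat f:=\lim_k f(x^k)$ by Theorem \ref{converge-fxk}(i), so $f(\tx^*)<\hat f$.

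The main obstacle is to exhibit iterations of NPG, arbitrarily close to $x^*$, at which the change-support step 2) is actually executed. I would first pigeonhole $\cK$ onto a single residue class mod $N$, and then use Theorem \ref{converge-fxk}(ii) (namely $\|x^{k+1}-x^k\|\to 0$ for $k\in\cN$) to shift along the cycle to the residue $\q$, producing infinitely many $m\equiv\q\pmod N$ with $x^m\to x^*$. The delicate point is that a step-1) coordinate swap, which is not indexed by $\cN$, could move the iterate by a non-vanishing amount during the shift; I expect to rule this out by showing that any persistent swap near $x^*$, together with convergence of $\{f(x^k)\}$ and uniform continuity of $f$ on $\Omega_0$, is impossible. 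Granting this, at each such $m$ we have $\supp(x^m)=I$, so $\gamma(0;x^m)>0$, while $\vartheta(\T;x^m)\to\vartheta(\T;x^*)\le 0<\eta$; since $0<\q<N$ forces step 1) to be skipped, step 2) runs with $\beta_m:=\beta(\T;x^m)>0$ and $\beta_m\to t^*$.

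With these iterations in hand I would close the contradiction by tracking objective values. In the strict case $\vartheta(\T;x^*)<0$, the inversion at $a^*$ is strict, so for large $m$ every $\tx^{m+1}\in\proj_\cFr(a^m)$ has support $\neq I$; extracting $\tx^{m+1}\to\tx^*$ (a projection of $a^*$, hence $\tx^*\neq x^*$) gives $f(x^{m+1})\le f(\tx^{m+1})\to f(\tx^*)<\hat f$, contradicting $f(x^{m+1})\to\hat f$. In the boundary case $\vartheta(\T;x^*)=0$, the projection $\tx^{m+1}$ may instead return to support $I$ and converge to $x^*$; here I invoke the change-support subroutine. The point $\hx^{m+1}=\cs(\tx^{m+1},\beta_m)$ forces the swap, and in the limit its support is a genuine top-$s$ support of $\cp(a^*)$ (again Theorem \ref{opt-proj} at the tie), so $\hx^{m+1}\to\tx^*\in\proj_\cFr(a^*)$ with $f(\hx^{m+1})\to f(\tx^*)<\hat f$. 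If the acceptance test \eqref{descent0} holds for infinitely many such $m$, then $x^{m+1}=\hx^{m+1}$ and $f(x^{m+1})\to f(\tx^*)<\hat f$, a contradiction. Otherwise \eqref{descent0} fails, and passing to the limit gives $f(\tx^*)\ge f(x^*)-\tfrac{c_1}{2}\|\tx^*-x^*\|^2$, which combined with the projection descent $f(\tx^*)\le f(x^*)-\tfrac12(1/t^*-L_f)\|\tx^*-x^*\|^2$, together with $\tx^*\neq x^*$ and $t^*\le\T$, forces $c_1\ge 1/\T-L_f$, contradicting $c_1\in(0,1/\T-L_f)$.

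In every case the assumption $\vartheta(\T;x^*)\le 0$ is untenable, so $\vartheta(\T;x^*)>0$. I expect the two hardest points to be, first, the bookkeeping of the previous paragraph that places a step-2) iteration arbitrarily near $x^*$ despite the periodic and nonmonotone structure and the possibility of intervening coordinate swaps, and second, the boundary case $\vartheta(\T;x^*)=0$, where no support change is visible through $\tx^{m+1}$ alone and one must manufacture the improving point $\tx^*$ through $\cs$, validate it as a true projection via Theorem \ref{opt-proj}, and then exploit the $2b$/$2c$ dichotomy to reach a contradiction against either $f(x^k)\to\hat f$ or the parameter restriction on $c_1$.
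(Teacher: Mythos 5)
Your overall architecture is sound and, in the decisive tie case, essentially coincides with the paper's: freeze $\supp(x^k)=\supp(x^*)$, manufacture step-2) iterations converging to $x^*$, pass the swap sets of $\cs$ to a constant limit $S$ along a subsequence, and play the 2b)/2c) dichotomy off against $c_1\in(0,1/\T-L_f)$. Two of your variations are genuine. First, you run a single contradiction from $\vartheta(\T;x^*)\le 0$ and dispose of the strict case $\vartheta(\T;x^*)<0$ by noting (via Lemma \ref{monotone-1}) that every projection $\tx^{m+1}$ must then change support, so a limit point is a projection of $a^*$ distinct from $x^*$ and drives $f(x^{m+1})$ below $\hat f$; the paper instead proves $\vartheta(\T;x^*)\ge 0$ directly from support preservation of $\tx^{k+1}$ plus Lemma \ref{monotone-1}, where support preservation comes from the squeeze $f(x^{k+1})\le f(\tx^{k+1})\le f(x^k)$ together with \eqref{descent-tx} and \eqref{tx-x}, which force $\tx^{k+1}\to x^*$. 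You will need that same squeeze in your tie case: ``$\tx^{m+1}$ converges to $x^*$'' is not automatic from $\supp(\tx^{m+1})=I$, but it does follow from \eqref{tx-x}. Second, certifying $\hx^{m+1}\to\hx^*\in\proj_\cFr(a^*)$ by observing that at the tie the swapped support $S$ is still a top-$s$ support of $\cp(a^*)$ and invoking Theorem \ref{opt-proj} is slicker than the paper's route, which verifies $\|\hx^*-a\|=\|x^*-a\|$ by hand with a nonnegative/sign-free case analysis; but you must still establish the tie relations (that the limiting index sets satisfy $I\subseteq\Arg\min$, $J\subseteq\Arg\max$ and that the exchanged coordinates carry equal values of $\cp(a^*)$), which the paper obtains by passing \eqref{b-ineq} to the limit. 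One small unjustified claim along the way: Theorem \ref{opt-proj} does not give $j\in\supp(\tx^*)$, since $\proj_{\Omega_T}(a^*_T)$ may vanish at $j$; what it does give, and all you need, is that $\tx^*_{i'}=0\neq x^*_{i'}$ for some $i'\in\supp(x^*)\setminus T$ (such $i'$ exists because $|T|=s=|\supp(x^*)|$ and $j\in T\setminus\supp(x^*)$), whence $\tx^*\neq x^*$.

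The one genuine gap is your treatment of the step-1) swaps in the bookkeeping. Your proposed remedy --- ruling out ``persistent swaps near $x^*$'' by combining convergence of $\{f(x^k)\}$ with uniform continuity of $f$ on $\Omega_0$ --- runs the implication the wrong way: uniform continuity bounds $|f(x)-f(y)|$ in terms of $\|x-y\|$, not conversely, and a swap displaces the iterate by $\sqrt{2}\,|x^k_{i_k}|$, a quantity with no reason to vanish even though $f$ strictly decreases at the swap and $\{f(x^k)\}$ converges; as stated, this step would fail. Fortunately the difficulty is avoidable rather than in need of removal: since $0<\q<N$, from any index one can shift to an index of residue $\q$ without ever crossing a generating index $\equiv 0 \pmod{N}$ --- shift forward when the residue lies in $(0,\q]$, and backward (into the previous block when the residue is $0$) otherwise --- so every intermediate iterate is produced by step 2) or 3), the corresponding indices lie in $\cN$, and Theorem \ref{converge-fxk}(ii) supplies the displacement control; no claim about swaps is needed. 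This is precisely the paper's device $i(k)$ in the proof of Lemma \ref{lem:vT}. With this substitution, and the minor repair that $\beta(\T;x^m)$ converges along a further subsequence to \emph{some} minimizer $t^*\in(0,\T]$ of $\gamma(\cdot\,;x^*)$ (not necessarily the designated largest one), your proof goes through.
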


\begin{proof}
 Since $x^*$ is an accumulation point of $\{x^k\}$ and $\{x^k\}\subseteq \cC_s \cap \Omega$,  one can 
observe that $x^*\in\cFr$ and moreover there exists a subsequence $\cal K$ such that  $\{x^k\}_\cK \to x^*$. Let 
\[
i(k) = \left\{\ba{ll}
\lfloor \frac{k}{N} \rfloor N +\q  & \mbox{if} \ \Mod(k,N) \neq 0, \\ [4pt]
k - N +\q  & \mbox{if} \ \Mod(k,N) = 0 
\ea\right. \quad\quad \forall k\in \cK.
\]
Clearly, $\Mod(i(k),N)=q$ and $|k-i(k)| \le N-1$ for all $k\in\tcK$. In addition, one can observe from NPG that for any 
$k\in \cK$, 
\bi
\item[] if $k< i(k)$, then $x^{k+1}, x^{k+2}, \ldots, x^{i(k)}$ are generated by step 2) or 3) of NPG;
\item[] if $k> i(k)$, then $x^{i(k)+1}, x^{i(k)+2},\ldots, x^k$ are generated by step 2) or 3) of NPG.
\ei
This, together with Theorem \ref{converge-fxk}, $\{x^k\}_\cK \to x^*$ and 
$|k-i(k)| \le N-1$ for all $k\in\cK$, implies that $\{x^{i(k)}\}_\cK \to x^*$ and $\{x^{i(k)+1}\}_\cK \to x^*$, that 
is, $\{x^k\}_\tcK \to x^*$ and $\{x^{k+1}\}_\tcK \to x^*$, where 
\[
\tcK = \{i(k):  k \in \cK\}.
\]
  In view of these, $\|x^*\|_0=s$ and $\|x^k\|_0 \le s$ for all $k$, 
 one can see that $\supp(x^k)=\supp(x^{k+1})$ for sufficiently large $k\in\tcK$. Considering 
a suitable subsequence of $\tcK$ if necessary, we assume for convenience that  
\beqa 
& \supp(x^k) =\supp(x^{k+1})  =  \supp(x^*) & \quad \forall k\in\tcK,\label{support} \\
& \|x^k\|_0=\|x^*\|_0  = s  &\quad \forall k\in\tcK. \label{card}
 \eeqa
 Also, since $\Mod(k,N)=q$ for all $k\in\tcK$, one knows that $\{x^{k+1}\}_{\tcK}$ is generated by 
 step 2) or 3) of NPG, which along with Theorem \ref{converge-fxk} implies 
 \beq \label{xdiff-tck}
 \lim\limits_{k\in\tcK \to \infty}\|x^{k+1}-x^k\| =  0  
 \eeq
We next divide the proof of \eqref{vT} into two separate cases as follows.

Case 1):  $\vartheta(\T;x^k) > \eta$ holds for infinitely many $k \in\tcK$. Then there exists a subsequence 
$\bar \cK\subseteq \tcK$ such that $\vartheta(\T;x^k) > \eta$ for all $k \in \bar \cK$. It follows from this, \eqref{gt}, 
\eqref{bT} and \eqref{support} that for all $t\in[0,\T]$ and $k\in\bar\cK$, 
\[ 
\eta < \vartheta(\T;x^k) \le  \min\limits_{i \in \supp(x^*)} \left(\cp(x^k-t\nabla f(x^k))\right)_i - 
\max\limits_{j \in [\supp(x^*)]^\c} \left(\cp(x^k-t\nabla f(x^k))\right)_j,
\] 
where $\cp$ is given in \eqref{px}. Taking the limit of this inequality as $k\in\bar\cK \to \infty$, and using $\{x^k\}_{\bar\cK} \to x^*$ 
and the continuity of $\cp$, we obtain that 
\[
\min\limits_{i \in \supp(x^*)} \left(\cp(x^*-t\nabla f(x^*))\right)_i - \max\limits_{j \in [\supp(x^*)]^\c} 
\left(\cp(x^*-t\nabla f(x^*))\right)_j \ \ge \ \eta \quad\quad \forall t \in [0,\T].
\]
This together with \eqref{gt} and \eqref{bT} yields $\vartheta(\T;x^*) \ge \eta >0$. 

Case 2): $\vartheta(\T;x^k) > \eta$ holds only for finitely many $k \in\tcK$. It implies that $\vartheta(T;x^k) \le \eta$ 
holds for infinitely many $k \in\tcK$. Considering a suitable subsequence of $\tcK$ if necessary, we 
assume for convenience that $\vartheta(\T;x^k) \le \eta$ for all $k\in\tcK$. 
This together with the fact that $\Mod(k,N)=q$ for all $k\in\tcK$ implies that $\{x^{k+1}\}_\tcK$ must be generated by step 2) if 
$\beta(\T;x^k)>0$ and by step 3) otherwise. We first show that 
\beq \label{lim-txk}
\lim\limits_{k\in\tcK \to \infty} \tx^{k+1} = x^*, 
\eeq 
where $\tx^{k+1}$ is defined in \eqref{tx}. One can observe that 
\beq \label{beta0}
\tx^{k+1}=x^k, \ f(\tx^{k+1})=f(x^k) \quad \mbox{if} \ \beta(\T;x^k)=0, k\in \tcK,
\eeq 
Also, notice that if $\beta(\T;x^k) > 0$ and $k\in \tcK$, then \eqref{descent-tx} holds. 
Combining this with \eqref{beta0}, we see that \eqref{descent-tx} holds for all $k\in \tcK$ and hence 
\beq \label{tx-x}
\|\tx^{k+1}-x^k\|^2 \le 2 (\T^{-1}-L_f)^{-1}(f(x^k)-f(\tx^{k+1})) \quad\quad \forall k\in \tcK.
\eeq 
This implies $f(\tx^{k+1}) \le f(x^k)$ for all  $k \in\tcK$. 
In addition, if $\beta(\T;x^k) > 0$ and $k\in \tcK$, one has $x^{k+1}=\tx^{k+1}$ or $\hx^{k+1}$: 
 if $x^{k+1}=\tx^{k+1}$, we have $f(x^{k+1}) =  f(\tx^{k+1})$; and  if $x^{k+1}=\hx^{k+1}$,  
 then \eqref{descent0} must hold, which yields
$
f(x^{k+1}) = f(\hx^{k+1}) \le  f(\tx^{k+1}).
$
It then follows that 
\beq \label{beta+}
f(x^{k+1}) \le  f(\tx^{k+1}) \le f(x^k) \quad \mbox{if} \ \beta(\T;x^k)>0, k\in \tcK.
\eeq 
By $\{x^k\}_\tcK \to x^*$ and \eqref{xdiff-tck}, we have $\{x^{k+1}\}_\tcK \to x^*$. Hence, 
\[
\lim\limits_{k\in\tcK \to \infty} f(x^{k+1}) = \lim\limits_{k\in\tcK \to \infty} f(x^k) = f(x^*).
\]
In view of this, \eqref{beta0} and \eqref{beta+}, one can observe that 
\[
\lim\limits_{k\in\tcK \to \infty} f(\tx^{k+1}) = \lim\limits_{k\in\tcK \to \infty} f(x^k) = f(x^*).
\]
This relation and \eqref{tx-x} lead to $\|\tx^{k+1}-x^k\| \to 0$ as $k\in\tcK \to \infty$, 
which together with $\{x^k\}_\tcK \to x^*$ implies that \eqref{lim-txk} holds as desired. 

Notice that $\|\tx^{k+1}\|_0 \le s$ for all $k\in\tcK$. 
Using this fact, \eqref{support}, \eqref{card} and \eqref{lim-txk}, one can see that there exists 
some $k_0$ such that  
\beqa 
 &\supp(\tx^{k+1})=\supp(x^k) =  \supp(x^*)  &  \quad \forall k\in\tcK, k>k_0,  \label{txk-support} \\
& \|\tx^{k+1}\|_0 = \|x^k\|_0=\|x^*\|_0  =  s & \quad \forall k\in\tcK, k>k_0. \label{txk-card}
 \eeqa
By \eqref{hx}, \eqref{txk-card}, $0<s<n$, and the definition of $\cs$, we can observe that 
$\supp(\hx^{k+1}) \neq \supp(\tx^{k+1})$ for all $k\in\tcK$ and $k>k_0$. This together with 
\eqref{support} and \eqref{txk-support} implies that 
\[
\supp(\hx^{k+1}) \neq \supp(x^{k+1}) \quad\quad \forall k\in\tcK, k>k_0,
\]
and hence $x^{k+1} \neq \hx^{k+1}$ for every $k\in\tcK$ and $k>k_0$. Using this and the fact that  
$\Mod(k,N)=\q$ and $\vartheta(\T;x^k) \le \eta$ for all $k\in\tcK$, we conclude that \eqref{descent0} must fail  
for all $k\in\tcK$ and $k>k_0$, that is, 
\beq \label{ascent}
f(\hx^{k+1}) > f(\tx^{k+1})-\frac{c_1}{2}\|\hx^{k+1}-\tx^{k+1}\|^2 \quad\quad \forall k\in\tcK, k>k_0.
\eeq 
Notice from \eqref{bT} that $\beta(\T;x^k)\in[0,\T]$. 
Considering a subsequence if necessary, we assume without loss of generality that 
\beq \label{tstar}
\lim\limits_{k\in\tcK \to \infty} \beta(\T;x^k) = t^*
\eeq
 for some $t^*\in[0,\T]$. By the definition of $\cp$, one has that for all $k\in\tcK$ and $k>k_0$,
\[
(\cp(\tx^{k+1}))_i > (\cp(\tx^{k+1}))_j \quad\quad \forall i\in\supp(\tx^{k+1}), \ j\in[\supp(\tx^{k+1})]^\c.
\]
This together with Lemma \ref{monotone-1} and \eqref{tx} implies that for all $k\in\tcK$ and $k>k_0$, 
\[
\min\limits_{i \in \supp(\tx^{k+1})} \left(\cp(x^k-\beta(\T;x^k)\nabla f(x^k))\right)_i \ \ge \  
\max\limits_{j \in[\supp(\tx^{k+1})]^\c} \left(\cp(x^k-\beta(\T;x^k)\nabla f(x^k))\right)_j. 
\]
In view of this relation and \eqref{txk-support}, one has 
\[
\min\limits_{i \in \supp(x^k)} \left(\cp(x^k-\beta(\T;x^k)\nabla f(x^k))\right)_i \ \ge \  
\max_{j \in[\supp(x^k)]^\c} \left(\cp(x^k-\beta(\T;x^k)\nabla f(x^k))\right)_j \quad \forall k\in\tcK, k>k_0,
\]
which along with \eqref{bT} implies that for all $k\in\tcK$, $k>k_0$ and $t\in[0,\T]$, 
\[
\ba{l}
\min\limits_{i \in \supp(x^k)} \left(\cp(x^k-t \nabla f(x^k))\right)_i - \max\limits_{j \in[\supp(x^k)]^\c} 
\left(\cp(x^k-t \nabla f(x^k))\right)_j  \\ [15pt] 
 \ \ \ge \min\limits_{i \in \supp(x^k)} \left(\cp(x^k-\beta(\T;x^k)\nabla f(x^k))\right)_i - 
\max\limits_{j \in[\supp(x^k)]^\c} \left(\cp(x^k-\beta(\T;x^k)\nabla f(x^k))\right)_j >0.
\ea
\]
Using this and \eqref{support}, we have that for all $k\in\tcK$ and $k>k_0$ and every $t\in [0,\T]$, 
\[
\ba{l}
\min\limits_{i \in \supp(x^*)} \left(\cp(x^k-t \nabla f(x^k))\right)_i - \max\limits_{j \in[\supp(x^*)]^\c} 
\left(\cp(x^k-t \nabla f(x^k))\right)_j  \nn \\ [15pt] 
 \ \  \ge \min\limits_{i \in \supp(x^*)} \left(\cp(x^k-\beta(\T;x^k)\nabla f(x^k))\right)_i - \max\limits_{j \in[\supp(x^*)]^\c} 
\left(\cp(x^k-\beta(\T;x^k)\nabla f(x^k))\right)_j \ > \ 0.
\ea
\]
Taking limits on both sides of this inequality as $k\in\tcK \to \infty$, and using \eqref{gt}, \eqref{tstar}, 
$\{x^k\}_\tcK \to x^*$ and the continuity of $\cp$, one can obtain 
that  $\gamma(t;x^*) \ge \gamma(t^*;x^*) \ge 0$ for every $t\in [0,\T]$. 
It then follows from this and \eqref{bT} that $\vartheta(\T;x^*) = \gamma(t^*;x^*)  \ge 0$.  

To complete the proof of \eqref{vT}, it suffices to show $\gamma(t^*;x^*) \neq 0$. Suppose on 
the contrary that 
$\gamma(t^*;x^*)  = 0$, which together with \eqref{gt} implies that 
\beq \label{amin-max}
\min\limits_{i \in \supp(x^*)}  b_i  \ = \  \max_{j \in [\supp(x^*)]^\c}  b_j,
\eeq 
where 
\beq \label{b-star}
b = \cp(x^*-t^*\nabla f(x^*)).
\eeq
Let 
\beqa
& & a^k = \tx^{k+1}-\beta(\T;x^k) \nabla f(\tx^{k+1}), \label{ak}  \\ [5pt] 
& & b^k = \cp\left(\tx^{k+1}-\beta(\T;x^k) \nabla f(\tx^{k+1})\right), \label{bk}  \\ [5pt]
& & I_k = \Arg\min\limits_{i \in \supp(\tx^{k+1})} b^k_i,  \quad
J_k = \Arg\max\limits_{j \in [\supp(\tx^{k+1})]^\c} b^k_j, \label{IJk}
\eeqa
$S_{I_k} \subseteq I_k$ and $S_{J_k} \subseteq J_k$ such that $|S_{I_k}|=|S_{J_k}|=\min\{|I_k|,|J_k|\}$. Also, let  
$S_k=\supp(\tx^{k+1})\cup S_{J_k}\setminus S_{I_k}$. 
Notice that $I_k$, $J_k$, $S_{I_k}$, $S_{J_k}$ and $S_k$ are some subsets in $\{1,\ldots,n\}$ and only 
have a finite number of possible choices. Therefore, there exists some subsequence $\hcK \subseteq \tcK$ such that 
\beq \label{const-index}
I_k = I, \ J_k = J, \ S_{I_k} = S_I, \ S_{J_k} = S_J, \ S_k = S \quad\quad \forall k\in\hcK
\eeq 
for some nonempty index sets $I$, $J$, $S_I$, $S_J$ and $S$. In view of these 
relations and \eqref{txk-support}, one can observe that 
\beqa
& I \subseteq \supp(x^*), \ J \subseteq [\supp(x^*)]^\c, \ S_I \subseteq I, \ S_J \subseteq J, &  \label{IJ}\\ [8pt] 
& |S_I|=|S_J|=\min\{|I|,|J|\}, \ S=\supp(x^*)\cup S_J \setminus S_I, & \label{SIJ}
\eeqa
and moreover, $S \neq \supp(x^*)$ and $|S|=|\supp(x^*)|=s$. In addition, by \eqref{lim-txk}, \eqref{tstar}, \eqref{ak}, \eqref{bk}, 
$\hcK \subseteq \tcK$ and the continuity of $\cp$, we see that 
\beq\label{lim-ab}
\lim\limits_{k\in\hcK \to \infty} a^k = a, \quad\quad \lim\limits_{k\in\hcK \to \infty} b^k = b,  \quad\quad b = \cp(a)
\eeq
where $b$ is defined in \eqref{b-star} and $a$ is defined as 
\beq \label{a-star}
a = x^*-t^* \nabla f(x^*).
\eeq

Claim that 
\beq \label{lim-hx}
\lim\limits_{k\in\hcK \to \infty} \hx^{k+1} = \hx^*, 
\eeq
where $\hx^*$ is defined as
\beq \label{hx-star}
\hx^*_S = \proj_{\Omega_S}(a_S), \quad \hx^*_{S^\c} = 0.
\eeq 
Indeed, by the definitions of $\cs$ and $\hx^{k+1}$, we can see that for all $k\in\hcK$, 
\[
\hx^{k+1}_{S_k} = \proj_{\Omega_{S_k}} (a^k_{S_k}), \quad \hx^{k+1}_{(S_k)^\c} = 0,
\]
which together with \eqref{const-index} yields
\[
\hx^{k+1}_S = \proj_{\Omega_{S}} (a^k_{S}) , \quad \hx^{k+1}_{S^\c} = 0 \quad \forall k \in \hcK.
\]
Using these, \eqref{lim-ab} and the continuity of $\proj_{\Omega_S}$, we immediately see that \eqref{lim-hx} and 
\eqref{hx-star} hold as desired. 

We next show that 
\beq \label{char-hxs}
\hx^* \in \Arg\min\limits_{x\in\cFr} \|x-a\|^2,
\eeq
where $a$ and $\hx^*$ are defined in \eqref{lim-ab} and \eqref{hx-star}, respectively. 
Indeed, it follows from \eqref{txk-support},  \eqref{IJk} and \eqref{const-index} that 
\beq \label{b-ineq}
\ba{lcl}
b^k_i &\le & b^k_j \quad \forall i \in I,  j\in \supp(x^*), k \in \hcK, k > k_0, \\ [5pt]
b^k_i &\ge & b^k_j \quad \forall i \in J,  j\in[\supp(x^*)]^\c, k \in \hcK, k > k_0.
\ea
\eeq
Taking limits as $k\in\hcK \to \infty$ on 
both sides of the inequalities in \eqref{b-ineq}, and using  \eqref{lim-ab}, one has
\[
\ba{lcl}
b_i &\le & b_j \quad \forall i \in I,  j\in \supp(x^*), \\ [5pt]
b_i &\ge & b_j \quad \forall i \in J,  j\in[\supp(x^*)]^\c.
\ea
\]
These together with \eqref{IJ} imply that 
\[
I \subseteq  \Arg\min\limits_{i \in \supp(x^*)} b_i, \quad\quad 
J \subseteq  \Arg\max\limits_{j \in [\supp(x^*)]^\c} b_j.
\]
By these relations, \eqref{amin-max}, \eqref{IJ} and \eqref{SIJ}, one can observe that $b_{S_I}=b_{S_J}$ 
and
\beq \label{sub-b}
b_{\supp(x^*)} = b_S,
\eeq 
where $S_I$, $S_J$ and $S$ are defined in \eqref{IJ} and \eqref{SIJ}, respectively.  
 In addition, it follows from \eqref{tx} that 
\[
\|\tx^{k+1}-(x^k-\beta(\T;x^k)\nabla f(x^k))\|^2 \le \|x-(x^k-\beta(\T;x^k)\nabla f(x^k))\|^2 \quad \forall x\in\cFr, k\in\hcK.
\]
Taking limits on both sides of this inequality as $k\in\hcK \to \infty$, and using \eqref{lim-txk}, 
\eqref{tstar}, \eqref{a-star} and $\{x^k\}_\hcK \to x^*$, one has 
\[
\|x^*-a\|^2 \le \|x-a\|^2 \quad \forall x\in\cFr,
\]  
and hence, 
\beq \label{optx}
x^* \in \Arg\min\limits_{x\in\cFr} \|x-a\|^2.
\eeq
It then follows from Lemma \ref{proj-general} that 
\beq \label{sub-x}
x^*_{\supp(x^*)} = \proj_{\Omega_{\supp(x^*)}} \left(a_{\supp(x^*)}\right).
\eeq
Recall that $|\supp(x^*)|=|S|$, which along with the symmetry of $\Omega$ implies that  
\beq \label{omega-eq}
\Omega_{\supp(x^*)}=\Omega_S.  
\eeq
We now prove that 
\beq \label{opt-xs}
\|\hx^*-a\|^2 = \|x^*-a\|^2 
\eeq
by considering two separate cases as follows.

Case i): $\Omega$ is nonnegative symmetric.  This together with  $b=\cp(a)$ and \eqref{px} yields 
$b=a$. Using this and \eqref{sub-b},  we can observe that 
\beq \label{sub-a}
a_S = a_{\supp(x^*)}, \quad \quad a_{S^\c}  = a_{[\supp(x^*)]^\c}.
\eeq 
In view of these, \eqref{hx-star}, \eqref{sub-x} and \eqref{omega-eq}, one has 
$\hx^*_S = x^*_{\supp(x^*)}$. Using this, \eqref{hx-star} and \eqref{sub-a}, we have
\[
\|\hx^*-a\|^2 = \|\hx^*_S-a_S\|^2 + \|a_{S^\c}\|^2 = \|x^*_{\supp(x^*)}-a_{\supp(x^*)}\|^2 + \left\|a_{[\supp(x^*)]^\c}\right\|^2 = \|x^*-a\|^2.
\]

Case ii): $\Omega$ is sign-free symmetric. It implies that $\Omega_S$ is also  
sign-free symmetric. Using this fact, $|S|=s$,  Lemma \ref{relation-projs}, \eqref{hx-star}, \eqref{sub-x}, 
\eqref{omega-eq} and \eqref{sub-a}, we obtain that 
\beqa
\hx^*_S &=& \sign(a_S) \circ \proj_{\Omega_S \cap \Re^s_+}(|a_S|),  \label{hxs} \\
x^*_{\supp(x^*)} &=& \sign(a_{\supp(x^*)}) \circ \proj_{\Omega_S \cap \Re^s_+}(|a_{\supp(x^*)}|). \label{xs1}
\eeqa
Notice that $b=\cp(a)$. Using  this relation, \eqref{px}, \eqref{sub-b} and \eqref{sub-a}, one can have  
\beq \label{aS}
|a_S| = |b_S| = |b_{\supp(x^*)}| = \left|a_{\supp(x^*)}\right|.
\eeq  
Using \eqref{hxs}, \eqref{xs1} and \eqref{aS}, we can observe that 
\[
|\hx^*_S| = \left|x^*_{\supp(x^*)}\right|, \quad \quad a_S \circ \hx^*_S = a_{\supp(x^*)} \circ x^*_{\supp(x^*)}. 
\]
In view of these two relations and \eqref{aS}, one can obtain that   
\[
\ba{lcl}
\|\hx^*-a\|^2 &=& \|\hx^*_S-a_S\|^2 + \|a_{S^\c}\|^2 = \|\hx^*_S\|^2 - 2 \left(a_S\right)^T \hx^*_S + \|a\|^2, \\ [8pt]
&=& \left\|x^*_{\supp(x^*)}\right\|^2 - 2  \left(a_{\supp(x^*)}\right)^T x^*_{\supp(x^*)} + \|a\|^2  \ = \ \|x^*-a\|^2. 
\ea
\]
Combining the above two cases, we conclude that \eqref{opt-xs} holds. In addition, notice from 
\eqref{hx-star} and $|S|=s$ that $\hx^*\in\cFr$. In view of this, \eqref{optx} and \eqref{opt-xs}, we conclude 
that \eqref{char-hxs} holds as desired. 

Recall from above that $|\hx^*_S| = |x^*_{\supp(x^*)}|$, which together with $\|x^*\|_0=s$ 
and $\|\hx^*\|_0 \le s$ implies $\supp(\hx^*)=S$. Notice that  $S \neq \supp(x^*)$. It then follows $\hx^* \neq x^*$. Using this, 
\eqref{a-star} and \eqref{char-hxs}, one observe that $t^* \neq 0$ and hence $t^* \in (0, \T]$. 
By this relation, \eqref{a-star}, 
\eqref{char-hxs}, and a similar argument as for proving \eqref{fw}, we can obtain that 
\[
f(\hx^*) \le f(x^*) - \frac12\left(\frac{1}{\T}-L_f\right)\|\hx^*-x^*\|^2. 
\]
Using this relation,  \eqref{lim-txk},\eqref{lim-hx}, $c_1\in(0,1/\T-L_f)$, $\hcK\subseteq\tcK$ and $\hx^* \neq x^*$, 
one can observe that for all sufficiently large $k\in\hcK$, 
\[
f(\hx^{k+1}) \le f(\tx^{k+1}) - \frac{c_1}{2}\|\hx^{k+1}-\tx^{k+1}\|^2,
\]
which together with $\hcK\subseteq\tcK$ yields a contradiction to \eqref{ascent}. Therefore, \eqref{vT} holds 
and this completes the proof.
\end{proof}

\gap

%
%

\begin{lemma} \label{soln-path}
Suppose that $x^*\in\Re^n$ satisfies $\|x^*\|_0=s$ and  
\beq \label{soln-t1}
x^* \in \proj_{\cFr} (x^*-t_1\nabla f(x^*))
\eeq
for some $t_1>0$. In addition, assume $\gamma(t_2;x^*) \ge 0$ for some $t_2>0$, 
where $\gamma(\cdot;\cdot)$ is defined in \eqref{gt}. Then there holds
\[
x^* \in \proj_\cFr(x^*-t_2\nabla f(x^*)).
\]
\end{lemma}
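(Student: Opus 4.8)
The plan is to reduce everything to the restricted projection onto $\Omega_I$, where $I=\supp(x^*)$, and to exploit that the projected-gradient fixed-point condition on a convex set is insensitive to the positive stepsize. First I would set $g=\nabla f(x^*)$, $a=x^*-t_2 g$, and $I=\supp(x^*)$, so that $|I|=s$ because $\|x^*\|_0=s$. Since $\Omega$ is convex, $\Omega_I$ is a closed convex set and $\proj_{\Omega_I}$ is single valued. As $I\in\cT_s(x^*)\subseteq\overline\cT_s(x^*)$, applying Lemma \ref{proj-general} to the hypothesis \eqref{soln-t1} (with $x=x^*-t_1 g$ and $y=x^*$) yields $x^*_I=\proj_{\Omega_I}(x^*_I-t_1 g_I)$.

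Next I would rewrite this through the variational characterization of projection onto a convex set: $x^*_I=\proj_{\Omega_I}(x^*_I-t_1 g_I)$ holds if and only if $\langle g_I,\,u-x^*_I\rangle\ge 0$ for every $u\in\Omega_I$, the positive factor $t_1$ simply cancelling. The crucial observation is that this inequality no longer involves the stepsize, so the very same inequality certifies $x^*_I=\proj_{\Omega_I}(x^*_I-t_2 g_I)=\proj_{\Omega_I}(a_I)$. This stepsize-independence is the heart of the argument and the place I would state most carefully.

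I would then bring in the remaining hypothesis $\gamma(t_2;x^*)\ge 0$. By the definition \eqref{gt} this means $\min_{i\in I}(\cp(a))_i\ge\max_{j\in I^\c}(\cp(a))_j$, and since $|I|=s$ this says precisely that the coordinates indexed by $I$ form an $s$-largest block of $\cp(a)$; hence there is a sorting permutation $\sigma\in\tsigma(\cp(a))$ with $\cS^\sigma_{[1,s]}=I$, obtained by listing $I$ first and then $I^\c$. Finally I would invoke Theorem \ref{opt-proj} with $x=a$ and $T=I$: the point $y$ defined by $y_I=\proj_{\Omega_I}(a_I)$ and $y_{I^\c}=0$ lies in $\proj_\cFr(a)$. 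But $y_I=\proj_{\Omega_I}(a_I)=x^*_I$ by the stepsize-independence step, while $y_{I^\c}=0=x^*_{I^\c}$ because $I=\supp(x^*)$; therefore $y=x^*$, giving $x^*\in\proj_\cFr(a)=\proj_\cFr(x^*-t_2\nabla f(x^*))$, as required.

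The only delicate point, and the step I would double-check, is the tie-breaking in the sorting permutation: when $\min_{i\in I}(\cp(a))_i=\max_{j\in I^\c}(\cp(a))_j$, one must verify that $I$ can still be realized exactly as the top-$s$ index block of a valid non-ascending sorting permutation of $\cp(a)$. This causes no trouble because $\tsigma(\cp(a))$ permits arbitrary ordering within groups of equal values, so placing all of $I$ ahead of $I^\c$ is still non-ascending. Everything else is a routine consequence of the convexity of $\Omega_I$ together with Lemma \ref{proj-general} and Theorem \ref{opt-proj}.
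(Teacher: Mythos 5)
Your proposal is correct and follows essentially the same route as the paper's proof: reduce to the support block via Lemma \ref{proj-general}, observe that the fixed-point property on the convex set $\Omega_I$ is independent of the positive stepsize (your variational inequality $\langle g_I, u-x^*_I\rangle \ge 0$ for all $u\in\Omega_I$ is exactly the paper's normal-cone condition $-t_1[\nabla f(x^*)]_I \in \cN_{\Omega_I}(x^*_I)$, restated), and then use $\gamma(t_2;x^*)\ge 0$ together with Theorem \ref{opt-proj} to exhibit $x^*$ as a projection of $x^*-t_2\nabla f(x^*)$ onto $\cFr$. Your explicit handling of the tie-breaking in the sorting permutation is a detail the paper passes over silently, and it is resolved correctly.
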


\begin{proof}
For convenience, let 
\[
a=x^*-t_1\nabla f(x^*), \quad  b=x^*-t_2\nabla f(x^*), \quad T=\supp(x^*).
\] 
 In view of these, \eqref{gt} and $\gamma(t_2;x^*) \ge 0$, one has 
$
\min\limits_{i \in T} \left(\cp(b)\right)_i  \ \ge \ \max\limits_{j \in T^\c} \left(\cp(b)\right)_j, 
$
which together with $\|x^*\|_0=s$ implies there exists some $\sigma \in \tsigma(\cp(b))$  
such that $\cS^\sigma_{[1,s]} = T$. Let $y\in\Re^n$ be given as follows:
\[
y_T = \proj_{\Omega_T}(b_T), \quad\quad y_{T^\c} = 0.
\]
It then follows from Theorem \ref{opt-proj} that $y\in\proj_{\cC_s \cap \Omega}(b)$. To complete 
this proof, it suffices to show $y_T=x^*_T$. Indeed, by $T=\supp(x^*)$, \eqref{soln-t1} and Lemma  
\ref{proj-general}, one has $x^*_T = \proj_{\Omega_T}(a_T)$, which together with the convexity of 
$\Omega_T$ yields
\[
x^*_T = \arg\min\{\|z-a_T\|^2: z \in \Omega_T\}.
\]
By the first-order optimality condition of this problem and the definition of $a$, one has 
$
-t_1 [\nabla f(x^*)]_T \in \cN_{\Omega_T}(x^*_T). 
$
Since $t_1,t_2>0$, we immediately have 
$
-t_2 [\nabla f(x^*)]_T \in \cN_{\Omega_T}(x^*_T),
$ 
which along with the definition of $b$ and the convexity of 
$\Omega_T$ implies $x^*_T=\proj_{\Omega_T}(b_T)$. Hence, $y_T=x^*_T$ as desired.
\end{proof}

\gap

\begin{lemma}\label{loc-suff}
Suppose that $f$ is additionally convex, $x^*\in\Re^n$ satisfies $\|x^*\|_0=s$, and moreover 
\beq \label{opt-Arg}
x^* \in \Arg\min\limits_{x\in \cFr} \|x-(x^*-t\nabla f(x^*))\|^2
\eeq
for some $t>0$. Then $x^*$ is a local optimal solution of problem \eqref{sparse-prob}. 
\end{lemma}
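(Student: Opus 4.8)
The plan is to exploit the fact that, because $\|x^*\|_0=s$ is \emph{full}, the feasible set $\cFr$ coincides locally with a convex set on which $f$ (being convex) is minimized at $x^*$. First I would set $a=x^*-t\nabla f(x^*)$ and $T=\supp(x^*)$, so that $|T|=s$ and hence $T\in\cT_s(x^*)$. Since $x^*\in\Arg\min_{x\in\cFr}\|x-a\|^2$ means $x^*\in\proj_\cFr(a)$, Lemma \ref{proj-general} gives $x^*_T=\proj_{\Omega_T}(a_T)$. The convexity of $\Omega_T$ then yields the variational inequality $(a_T-x^*_T)^T(z-x^*_T)\le 0$ for all $z\in\Omega_T$; substituting $a_T-x^*_T=-t[\nabla f(x^*)]_T$ and dividing by $t>0$ gives $[\nabla f(x^*)]_T^T(z-x^*_T)\ge 0$ for all $z\in\Omega_T$.

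Second, I would establish the key geometric step, a local \emph{support-freezing} property. Because $x^*_i\neq 0$ for every $i\in T$, there is $\delta>0$ with $\min_{i\in T}|x^*_i|>\delta$. For any $x\in\cFr$ with $\|x-x^*\|<\delta$ we have $x_i\neq 0$ for all $i\in T$, so $T\subseteq\supp(x)$; combined with $\|x\|_0\le s=|T|$ this forces $\supp(x)=T$, i.e. $x_{T^\c}=0$. Hence, writing $C=\{x\in\Omega:\ x_{T^\c}=0\}$, every feasible $x$ in the ball $B(x^*,\delta)$ lies in $C$, which is convex (the intersection of the convex set $\Omega$ with a coordinate subspace), and $x^*\in C$.

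Third, I would combine the two ingredients. For any $x\in C$ we have $x_T\in\Omega_T$ and $x_{T^\c}=0=x^*_{T^\c}$, so the variational inequality from the first step gives $\nabla f(x^*)^T(x-x^*)=[\nabla f(x^*)]_T^T(x_T-x^*_T)\ge 0$. Convexity of $f$ then yields $f(x)\ge f(x^*)+\nabla f(x^*)^T(x-x^*)\ge f(x^*)$ for all $x\in C$. Since $\cFr\cap B(x^*,\delta)\subseteq C$, this shows $f(x)\ge f(x^*)$ for every feasible $x$ near $x^*$, i.e. $x^*$ is a local optimal solution of \eqref{sparse-prob}.

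The main obstacle is the local support-freezing step: the whole argument hinges on replacing the nonconvex constraint $\cFr$ by a convex one near $x^*$, and this is possible precisely because $\|x^*\|_0=s$. If instead $\|x^*\|_0<s$, a nearby feasible point could activate a new coordinate and escape $C$, which is exactly why that case is handled separately and globally in Theorem \ref{strong-converge}(ii). Once the inclusion $\cFr\cap B(x^*,\delta)\subseteq C$ is in hand, the remainder is the standard first-order sufficiency for convex minimization and is routine.
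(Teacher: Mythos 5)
Your proof is correct and follows essentially the same route as the paper: the paper sets $\tomega=\{x\in\Omega:\ x_{[\supp(x^*)]^\c}=0\}$, notes that \eqref{opt-Arg} makes $x^*$ the projection of $x^*-t\nabla f(x^*)$ onto this convex set so that $-\nabla f(x^*)\in\cN_{\tomega}(x^*)$, invokes convexity of $f$ to get $x^*\in\Arg\min\{f(x):x\in\tomega\}$, and then uses exactly your support-freezing radius $\epsilon=\min\{|x^*_i|: i\in\supp(x^*)\}$ together with $\|x^*\|_0=s$ to identify the feasible neighborhood in $\cFr$ with the neighborhood in $\tomega$. Your only cosmetic deviation is deriving the first-order condition in the reduced coordinates via Lemma \ref{proj-general} and the variational inequality on $\Omega_T$ rather than via the normal cone of $\tomega$ in $\Re^n$, which is the same argument in substance.
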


\begin{proof}
Let $J=[\supp(x^*)]^\c$ and $\tomega = \{x\in\Omega:  x_J = 0\}$. It is not hard to 
observe from \eqref{opt-Arg} that 
\[
x^* = \arg\min\limits_{x\in \tomega} \|x-(x^*-t\nabla f(x^*))\|^2,
\]
whose first-order optimality condition leads to $-\nabla f(x^*) \in \cN_{\tomega}(x^*)$. 
This together with convexity of $\tomega$ and $f$ implies that
$
x^* \in \Arg\min\{f(x): x\in\tomega\}.
$
It then follows that $f(x) \ge f(x^*)$ for all $x\in  \widetilde \cO(x^*;\epsilon)$, where  
$ \widetilde\cO(x^*;\epsilon) = \{x\in\tomega: \|x-x^*\| < \epsilon\}$ and  $\epsilon = \min\{|x^*_i|: i \in J^\c\}$.  
 By the definition of $\epsilon$ and $\|x^*\|_0=s$, one can observe that 
$\cO(x^*;\epsilon)=\widetilde \cO(x^*;\epsilon)$, where $\cO(x^*;\epsilon) = \{x\in\cFr: \|x-x^*\| < \epsilon\}$. 
We then conclude that $f(x) \ge f(x^*)$ for all $x\in \cO(x^*;\epsilon)$, which implies 
that $x^*$ is a local optimal solution of problem \eqref{sparse-prob}.
\end{proof}


\begin{lemma} \label{opt-all-t}
Suppose that $x^*\in\Re^n$ satisfies $\|x^*\|_0<s$ and moreover 
\beq \label{opt-Arg2}
x^* \in \proj_\cFr(x^*-\hatt\nabla f(x^*)).
\eeq
for some $\hatt>0$. Under Assumption \ref{global-assump}, there hold:
\bi
\item[(i)] $x^*=\proj_\cFr(x^*-t\nabla f(x^*))$ for all $t \ge 0$, that is, $x^*$ is the unique 
optimal solution to the problem 
\[
\min\limits_{x\in\cFr} \|x-(x^*-t\nabla f(x^*))\|^2.
\]
\item[(ii)] if $f$ is additionally convex in $\Omega$, then  $x^*\in \Arg\min\limits_{x\in \cFr} f(x)$, 
that is, $x^*$ is an optimal solution of $f$ over $\cFr$.
\ei
\end{lemma}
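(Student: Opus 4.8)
The plan is to funnel everything through the single normal-cone inclusion $-\nabla f(x^*) \in \cN_\Omega(x^*)$, from which both (i) and (ii) fall out by routine convexity arguments. The only nontrivial work is establishing that inclusion, and this is exactly what Assumption \ref{global-assump} is built to do.

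First I would extract slice-wise information from the hypothesis at $t=\hatt$. Writing $a = x^*-\hatt\nabla f(x^*)$, Lemma \ref{proj-general} applied to $x^*\in\proj_\cFr(a)$ gives $x^*_T = \proj_{\Omega_T}(a_T)$ for every $T\in\cT_s(x^*)$; such $T$ exist because $\|x^*\|_0<s\le n$, and each $\Omega_T$ is convex. The first-order optimality condition for this convex projection reads $a_T - x^*_T \in \cN_{\Omega_T}(x^*_T)$, and since $a_T - x^*_T = -\hatt(\nabla f(x^*))_T$ while $\cN_{\Omega_T}(x^*_T)$ is a cone and $\hatt>0$, I obtain $-(\nabla f(x^*))_T \in \cN_{\Omega_T}(x^*_T)$ for all $T\in\cT_s(x^*)$.

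Next I would invoke Assumption \ref{global-assump} with $x=x^*$ and $v=-\nabla f(x^*)$: the inclusions just derived are precisely the hypothesis \eqref{vT1}, so the assumption yields the global inclusion $-\nabla f(x^*)\in\cN_\Omega(x^*)$. This lifting—from the coordinate slices $\Omega_T$ back to $\Omega$ itself—is the crux of the lemma and the sole place where both Assumption \ref{global-assump} and the standing hypothesis $\|x^*\|_0<s$ are genuinely used. Everything afterward is bookkeeping.

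With $-\nabla f(x^*)\in\cN_\Omega(x^*)$ in hand, I would finish as follows. For (i), fix $t\ge 0$; since the normal cone is a cone, $(x^*-t\nabla f(x^*))-x^* = -t\nabla f(x^*)\in\cN_\Omega(x^*)$, which is exactly the (necessary and sufficient) condition characterizing $x^* = \proj_\Omega(x^*-t\nabla f(x^*))$ for the convex set $\Omega$. As $x^*\in\cFr\subseteq\Omega$ and $x^*$ minimizes $\|x-(x^*-t\nabla f(x^*))\|^2$ over the larger set $\Omega$, it minimizes over $\cFr$ as well, so $x^*\in\proj_\cFr(x^*-t\nabla f(x^*))$; Theorem \ref{unique-soln}, applicable because $\|x^*\|_0<s$, then upgrades this membership to the claimed equality and uniqueness. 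For (ii), convexity of $f$ and $\Omega$ turns $-\nabla f(x^*)\in\cN_\Omega(x^*)$ into $x^*\in\Arg\min_{x\in\Omega}f(x)$, and restricting once more to $\cFr\subseteq\Omega$ with $x^*\in\cFr$ gives $x^*\in\Arg\min_{x\in\cFr}f(x)$.
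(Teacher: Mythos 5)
Your proposal is correct and follows essentially the same route as the paper: extract $x^*_T=\proj_{\Omega_T}(a_T)$ via Lemma \ref{proj-general}, pass to $-[\nabla f(x^*)]_T\in\cN_{\Omega_T}(x^*_T)$ by first-order optimality, lift to $-\nabla f(x^*)\in\cN_\Omega(x^*)$ via Assumption \ref{global-assump}, and finish by convexity of $\Omega$ (for (i)) and of $f$ (for (ii)). The only cosmetic difference is in the uniqueness step of (i): you invoke Theorem \ref{unique-soln} using $\|x^*\|_0<s$, whereas the paper gets uniqueness directly from $x^*=\proj_\Omega(x^*-t\nabla f(x^*))$ together with $x^*\in\cFr\subseteq\Omega$ (any minimizer over $\cFr$ would also minimize over $\Omega$, whose projection is a singleton) --- both are valid.
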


\begin{proof}
For convenience, let $a = x^*-\hatt \nabla f(x^*)$. It follows from \eqref{opt-Arg2} and Lemma 
\ref{proj-general} that 
\beq \label{projT}
x^*_T = \proj_{\Omega_T}(a_T) \quad\quad \forall T \in \cT_s(x^*).
\eeq 
Using this, $\hatt >0$, the definition of $a$, and the first-order optimality condition of the 
associated  optimization problem with \eqref{projT}, we have 
$-[\nabla f(x^*)]_T \in \cN_{\Omega_T}(x^*_T)$ for every $T \in \cT_s(x^*)$.
This together with Assumption \ref{global-assump} yields
\beq \label{1st-opt}
-\nabla f(x^*) \in \cN_{\Omega}(x^*).
\eeq
Using this and the convexity of $\Omega$ and $f$, we have $
x^* = \proj_\Omega(x^*-t\nabla f(x^*))$  for all $t \ge 0$.
It then follows that statement (i) holds due to $x^*\in\cFr \subseteq \Omega$.  In addition, 
by \eqref{1st-opt} and the convexity of $f$ and $\Omega$, one can see that 
$x^*\in \Arg\min\{f(x): x\in\Omega\}$, which together with $x^*\in\cFr \subseteq \Omega$ implies that 
 statement (ii) holds. 
\end{proof}

\gap

We are now ready to prove Theorem \ref{lim-point}. 


{\bf Proof of Theorem \ref{lim-point}}. 
Let $\{x^k\}$ be the sequence generated by the NPG method and $x^*$ an accumulation point of $\{x^k\}$.
In view of Lemma \ref{fixed-point}, there exists $\hatt \in [\min\{t_{\min}, \tau/(L_f+c_2)\}, t_{\max}]$ 
such that \eqref{opt-eq1} holds. 

(i) It follows from \eqref{opt-eq1} and \cite[Theorem 5.2]{BeHa14} that 
$x^*\in \proj_\cFr(x^*-t \nabla f(x^*))$ for every $t \in[0,\hatt]$.
Using this relation, $\|x^*\|_0<s$ and Theorem \ref{unique-soln}, we know that 
$x^*=\proj_\cFr(x^*-t\nabla f(x^*))$ for all $t\in [0,\hatt]$. Hence, statement (i) of this theorem holds.

(ii) Suppose  $\|x^*\|_0=s$. In view of Lemma \ref{lem:vT}, we know that $\vartheta(\T;x^*)>0$, which 
together with \eqref{bT} implies that $\gamma(t;x^*)>0$ for all $t\in[0,\T]$. Using this, \eqref{opt-eq1} and 
Lemma \ref{soln-path}, we obtain that 
\[
x^*\in \proj_\cFr(x^*-t \nabla f(x^*)) \quad\quad \forall t \in[0,\T].
\]
Using this, \eqref{gt}, Theorem \ref{unique-soln1} and the fact that 
$\gamma(t;x^*)>0$ for all $t\in[0,\T]$, we further have
\[
x^*=\proj_\cFr(x^*-t \nabla f(x^*)) \quad\quad \forall t \in[0,\T].
\]

(iii) Let $\hatt \in [\min\{t_{\min}, \tau/(L_f+c_2)\}, t_{\max}]$ be given in statement (i) of  this 
theorem. In view of \eqref{special-param}, one can observe $\hatt \ge \T$. The conclusion of this 
statement then immediately follows from  statements (i) and (ii) of this theorem.

(iv) Suppose that $\|x^*\|_0=s$ and $f$ is convex. It follows from \eqref{opt-eq1} and Lemma 
\ref{loc-suff} that $x^*$ is a local optimal solution of problem \eqref{sparse-prob}.  

(v) Suppose that $\|x^*\|_0=s$ and $(\cp(x^*) )_{\bar i}\neq (\cp(x^*))_{\bar j}$ for all 
$\bar i\neq \bar j\in\supp(x^*)$. 
We will show that $x^*$ is a coordinatewise stationary point.  Since $x^*$ is an accumulation point of 
$\{x^k\}$, there exists a subsequence $\cK$ such that $\{x^k\}_\cK \to x^*$. For every $k\in \cK$, 
let  $\iota(k)$ be the unique integer in $[k, k+N)$ such that $\Mod(\iota(k),N)=0$, and let
\[
\tcK = \{\iota(k): k \in \cK\}, \quad\quad \hcK = \{k\in\tcK: x^{k+1} \ \mbox{is generated by step 1)} \}.
\]
In addition, for each $k\in\tcK$, let $i_k,j_k$ be chosen by the subroutine $\cswap$, which satisfy
\beqa 
&& i_k \in \Arg\min\{(\cp(-\nabla f(x^k)))_\ell: \ell \in I_k\},  \label{ik}\\ [5pt]
&& j_k \in \Arg\max\{(\cp(-\nabla f(x^k)))_\ell: \ell \in [\supp(x^k)]^\c\}, \label{jk}
\eeqa
where 
\beq \label{Ik}
I_k= \Arg\min\{(\cp(x^k))_i: i \in \supp(x^k)\}.
\eeq
 It is not hard observe that if $\iota(k) \neq k$  for some $k\in\cK$, then $x^{k+1}, \ldots, 
x^{\iota(k)}$ are generated by step 2) or 3) of NPG. Using this observation, Theorem 
\ref{converge-fxk} and 
$0 \le \iota(k)-k<N$ for all $k\in\cK$, one can see that $\{x^k\}_{\tcK} \to x^*$. By this and $\|x^*\|_0=s$, 
there exists $k_0$ such that $\supp(x^k)=\supp(x^*)$ for all $k\in\tcK$ and $k>k_0$. Also, notice 
that there are only finite number of possible choices for $I_k$, $i_k$ and $j_k$. Considering a subsequence of $\hcK$ 
if necessary,  we assume for convenience that for all $k\in\hcK$, 
$I_k \equiv I$, $i_k \equiv i$, $j_k \equiv j$ for some $I$, $i$ and $j$. In view of these, \eqref{ik}, \eqref{jk}, 
\eqref{Ik}, $\{x^k\}_\tcK \to x^*$, and the continuity of $\cp$ and $\nabla f$, one can obtain that 
\beqa
&& i \in \Arg\min\{(\cp(-\nabla f(x^*)))_\ell: \ell \in I\},  \label{cw-cond1} \\ [5pt]
&& j \in \Arg\max\{(\cp(-\nabla f(x^*)))_\ell: \ell \in [\supp(x^*)]^\c\}, \label{cw-cond2}\\ [5pt]
&& I \subseteq \Arg\min\{(\cp(x^*))_\ell: \ell \in \supp(x^*)\}. \nn
\eeqa
The last relation along with the assumption $(\cp(x^*))_{\bar i} \neq (\cp(x^*))_{\bar j}$ for all $\bar i\neq \bar j \in \supp(x^*)$ 
implies that 
\beq \label{cw-cond3}
 I = \Arg\min\{(\cp(x^*))_\ell: \ell \in \supp(x^*)\}. 
\eeq
We next show that 
\beq \label{cw-cond4}
f(x^*) \le \left\{\ba{lr} 
\min\{f(x^*-x^*_i \be_i+x^*_i \be_j), f(x^*-x^*_i \be_i-x^*_i \be_j)\} & \mbox{if} \ \Omega \ 
\mbox{is sign-free} \\ 
&\mbox{symmetric};  \\ [8pt]
f(x^*-x^*_i \be_i+x^*_i \be_j) &\mbox{if} \ \Omega \ 
\mbox{is  nonnegative} \\ 
&\mbox{symmetric}.
\ea\right.
\eeq
by considering two separate cases as follows.

Case 1): $\hcK$ is an infinite set. By Theorem \ref{converge-fxk}, $\{x^k\}_\tcK \to x^*$ and 
the continuity of $f$,  we have 
\beq \label{fxk+1}
\lim\limits_{k\in\hcK \to \infty} f(x^{k+1}) = \lim\limits_{k\in\hcK \to \infty} f(x^k) = f(x^*).
\eeq 
 In addition, by the definitions of $\hat \cK$ and $\cswap$, one can observe that for each $k\in\hcK$, 
\[
f(x^{k+1}) \le \left\{\ba{lr} 
\min\{f(x^k-x^k_{i_k} \be_i+x^k_{i_k} \be_{j_k}), f(x^k-x^k_{i_k} \be_{i_k}-x^k_{i_k} \be_{j_k})\} & \mbox{if} \ \Omega \ 
\mbox{is sign-free} \\ 
&\mbox{symmetric};  \\ [8pt]
f(x^k-x^k_{i_k} \be_i+x^k_{i_k} \be_{j_k}) & \mbox{if} \ \Omega \ 
\mbox{is  nonnegative} \\ 
&\mbox{symmetric}.
\ea\right.
\]
Taking limits as $k\in\hcK \to \infty$ on both sides of this relation and using \eqref{fxk+1}, we see 
that \eqref{cw-cond4} holds.  

Case 2): $\hcK$ is a finite set. It follows that $\bar\cK=\tcK\setminus\hcK$ is an infinite set. 
Moreover, by the definitions of $\bar\cK$ and $\cswap$, one can observe that for every $k\in\bar\cK$, 
\[
f(x^{k}) \le \left\{\ba{lr} 
\min\{f(x^k-x^k_{i_k} \be_i+x^k_{i_k} \be_{j_k}), f(x^k-x^k_{i_k} \be_{i_k}-x^k_{i_k} \be_{j_k})\} & \mbox{if} \ \Omega \ 
\mbox{is sign-free} \\ 
&\mbox{symmetric};  \\ [8pt]
f(x^k-x^k_{i_k} \be_i+x^k_{i_k} \be_{j_k}) & \mbox{if} \ \Omega \ 
\mbox{is  nonnegative} \\ 
&\mbox{symmetric}.
\ea\right.
\]
Taking limits as $k\in\bar\cK \to \infty$ on both sides of this relation, and using $\{x^k\}_\tcK \to x^*$ and the 
continuity of $f$, we conclude that \eqref{cw-cond4} holds.  

In addition, by Lemmas \ref{proj-general} and \ref{fixed-point}, we know that 
\[
x^*_T = \proj_{\Omega_T}(x^*_T-\hatt(\nabla f(x^*))_T) \quad\quad \forall T\in\cT_s(x^*)
\]
for some $\hatt \in [\min\{t_{\min}, \tau/(L_f+c_2)\}, t_{\max}]$. Combining this relation with 
\eqref{cw-cond1}-\eqref{cw-cond4}, we see that $x^*$ is a coordinatewise stationary point. 
{\hfill\hbox{\vrule width1.0ex height1.0ex}\parfillskip 0pt\par}

\gap 

Finally we present a proof for Theorem \ref{strong-converge}.
  
{\bf Proof of Theorem \ref{strong-converge}}.  
Let $\{x^k\}$ be the sequence generated by the NPG method and $x^*$ an 
accumulation point of $\{x^k\}$. We divide the proof of statement (i) of this theorem 
into two separate cases. 

Case 1): $\|x^*\|_0=s$. It then follows from Theorem \ref{lim-point} that statement (i) of 
Theorem \ref{strong-converge}  holds.

Case 2): $\|x^*\|_0<s$. By Theorem \ref{lim-point}, there exists some $\hatt>0$ such that 
\beq \label{opt-hatt}
x^* \in \proj_\cFr(x^*-\hatt\nabla f(x^*)).
\eeq 
In view of this relation, Assumption \ref{global-assump} and Lemma \ref{opt-all-t}, we again 
see that statement (i) of this theorem holds. 

 Statement (ii) of this theorem immediately follows from \eqref{opt-hatt}, Assumption  
\ref{global-assump} and Lemma \ref{opt-all-t}.  In addition, statements (iii) and (iv) of 
this theorem hold due to statements (iii) and (iv) of Theorem \ref{lim-point}. 
{\hfill\hbox{\vrule width1.0ex height1.0ex}\parfillskip 0pt\par}

\section{Numerical results}
\label{results}

In this section we conduct numerical experiment to compare the performance of our NPG method and 
the PG method with a constant stepsize.  In particular, we apply these methods to problem 
\eqref{sparse-prob} with $f$ being chosen as a least squares or a logistic loss. All codes are 
written in MATLAB and all computations are performed on a MacBook Pro running with Mac OS X Lion 10.7.4 and 4GB memory.

Recall that the PG method with a constant stepsize $\alpha$ generates the iterates according to
\[
x^{k+1} = \proj_{\cFr}(x^k-\alpha\nabla f(x^k)) \quad\quad \forall k \ge 0
\] 
for some $\alpha\in (0, 1/L_f)$, where $L_f$ is the Lipschitz constant of $\nabla f$. In our experiments, 
we set $\alpha=0.995/L_f$.  For our NPG method, we set $\T=0.995/L_f$, 
$t_{\min}=\T$, $t_{\max} = 10^8$, $c_1=\min(0.995(1/\T-L_f),10^{-8})$, $c_2=10^{-4}$, $\tau=2$, 
$\eta=10^3$. In addition, we set $t^0_0=1$, and  update $t^0_k$ by the 
same strategy as used in \cite{BaBo88,BiMaRa00,WrNoFi09},
that is,
\[
t^0_k = \max\left\{t_{\min},\min\left\{t_{\max},\frac{\|\Delta x\|^2}{|\Delta x^T \Delta g|}\right\}\right\},
\]
where $\Delta x = x^k -x^{k-1}$ and $\Delta g = \nabla f(x^k) - \nabla f(x^{k-1})$. Both methods 
terminate according to the criterion $|f(x^k)-f(x^{k-1})| \le 10^{-8}$.

In the first experiment we compare the performance of NPG and PG for solving problem 
\eqref{sparse-prob} with $\Omega=\Re^n$ and 
\[
f(x)=\frac12 \|Ax-b\|^2  \quad\quad\quad\quad\quad\quad  \mbox{(least squares loss)}.
\]
The matrix $A \in \Re^{m \times n}$ and the vector $b\in \Re^m$ are randomly 
generated in the same manner as described in $l_1$-magic \cite{CanRom05}. In particular, 
given $\sigma >0$ and positive integers $m$, $n$, $s$ with $m < n$ and $s< n$, 
we first generate a matrix $W\in\Re^{n\times m}$ with entries randomly chosen 
from a standard normal distribution. We then compute an orthonormal basis, denoted by $B$, for the range space 
of $W$, and set $A=B^T$. In addition, we randomly generate a vector $\tx \in \Re^n$ with 
only $s$ nonzero components that are $\pm 1$, and generate a vector $v\in\Re^{m}$ 
with entries randomly chosen from a standard normal distribution. Finally, we set $b = A\tx+\sigma v$. In 
particular, we choose $\sigma=0.1$ for all instances. 

We choose $x^0=0$ as the initial point for both methods, and set $M=4$, $N=5$, $\q=3$ for NPG. 
The computational results are presented in Table \ref{res-l1}. In detail, the parameters 
$m$, $n$ and $s$ of each instance are listed in the first three columns, respectively. The 
cardinality of the approximate solution found by each method is presented in next two columns.  
The objective function value of \eqref{sparse-prob}  for these
methods is given in columns six and seven, and CPU times (in seconds) are given in the last two columns,
respectively. One can observe that both methods are comparable in terms of CPU time, but NPG 
substantially outperforms PG in terms of objective values.

\begin{table}[t!]
\caption{PG and NPG methods for least squares loss}
\centering
\label{res-l1}
\begin{tabular}{|rrr||cc||cc||rr|}
\hline
\multicolumn{3}{|c||}{Problem} &  \multicolumn{2}{c||}{Solution Cardinality} & 
\multicolumn{2}{c||}{Objective Value} &
 \multicolumn{2}{c|}{CPU Time} \\
\multicolumn{1}{|c}{m} & \multicolumn{1}{c}{n} &  \multicolumn{1}{c||}{s}
& \multicolumn{1}{c}{\sc PG} & \multicolumn{1}{c||}{\sc NPG} 
& \multicolumn{1}{c}{\sc PG} & \multicolumn{1}{c||}{\sc NPG} 
& \multicolumn{1}{c}{\sc PG} &  \multicolumn{1}{c|}{\sc NPG} \\
\hline
120 & 512 & 20 &  20 & 20 & 0.61 & 0.38 & 0.02 & 0.05 \\
240 & 1024 & 40 & 40 & 40 & 1.30 & 0.87 & 0.03 & 0.06 \\
360 & 1536 & 60 & 60 & 60 & 2.42 & 1.44 & 0.04 & 0.08 \\
480 & 2048 & 80 & 80 & 80 & 2.57 & 1.86 & 0.09 & 0.10 \\
600 & 2560 & 100 & 100 & 100 & 3.46 & 2.36 & 0.19 & 0.18\\
720 & 3072 & 120 & 120 & 120 & 4.21 & 2.77 & 0.34 & 0.31 \\
840 & 3584 & 140& 140 & 140 & 5.42 & 3.44 & 0.49 & 0.37 \\
960 & 4096 & 160& 160 & 160 & 5.76 & 3.92 & 0.64 & 0.46 \\
1080 & 4608 & 180& 180 & 180 & 6.85 & 3.94 & 0.55 & 0.76 \\
1200 & 5120 & 200& 200 & 200 & 8.07 & 4.75 & 0.95 & 0.84 \\
\hline
\end{tabular}
\end{table}

In the second experiment, we compare the performance of NPG and PG for solving problem \eqref{sparse-prob} with $\Omega=\Re^n$, $s=0.01n$ and 
\beq \label{lsq}
f(x)=\sum^m_{i=1} \log(1+\exp(-b_i (a^i)^Tx))    \quad\quad\quad\quad\quad\quad  \mbox{(logistic loss)}.
\eeq 
It can be verified that the Lipschiz constant of $\nabla f$ is $L_f = \|\tilde A\|^2$,
where
$
\tilde A = \left[
b_1 a^1, \cdots, b_m a^m \right].
$

The samples $\{a^1, \ldots, a^m\}$ and the corresponding outcomes $b_1, \ldots, b_m$ are generated 
in the same manner as described in \cite{KoKiBo07}. In detail, for each instance we choose equal 
number of positive and negative samples, that is, $m_+  =m_- = m/2$, where $m_+$ (resp., $m_-$) 
is the number of samples with outcome $+1$ (resp., $-1$). The features of positive (resp., negative) 
samples are independent and identically distributed, drawn from a normal distribution $N(\mu,1)$, 
where $\mu$ is in turn drawn from a uniform distribution on $[0,1]$ (resp., $[-1,0]$). 

We choose $x^0=0$ as the initial point for both methods, and set $M=2$, $N=3$, $\q=2$ for NPG. The results of NPG and PG 
for the instances generated above are presented in Table \ref{res-logistic}.  We observe that NPG outperforms 
 PG in terms of objective value and moreover it is substantially superior to PG in CPU time.

\begin{table}[t!]
\caption{PG and NPG methods for logistic loss}
\centering
\label{res-logistic}
\begin{tabular}{|rr||cc||cc||rr|}
\hline
\multicolumn{2}{|c||}{Problem} &  \multicolumn{2}{c||}{Solution Cardinality} & 
\multicolumn{2}{c||}{Objective Value} &
 \multicolumn{2}{c|}{CPU Time} \\
\multicolumn{1}{|c}{m} & \multicolumn{1}{c||}{n} 
& \multicolumn{1}{c}{\sc PG} & \multicolumn{1}{c||}{\sc NPG} 
& \multicolumn{1}{c}{\sc PG} & \multicolumn{1}{c||}{\sc NPG} 
& \multicolumn{1}{c}{\sc PG} &  \multicolumn{1}{c|}{\sc NPG} \\
\hline
500 & 1000 & 10 & 10 & 304.0& 301.4 & 6.0 & 0.3 \\ 
1000 & 2000 & 20 & 20 & 616.4 & 606.9 & 75.2 & 0.3 \\
1500 & 3000 & 30 & 30 & 978.1 & 912.4 &  263.4 & 1.1 \\
2000 & 4000 & 40 & 40 & 1286.8 & 1215.6 & 425.3 & 1.8 \\
2500 & 5000 & 50 & 50 & 1588.3 & 1522.0 & 972.3 & 2.7 \\
3000 & 6000 & 60 & 60 & 1819.1 & 1861.3 & 1560.5 & 5.6 \\
3500 & 7000 & 70 & 70 & 2241.2 & 2129.7 & 2321.3 & 5.0\\
4000 & 8000 & 80 & 80 & 2514.3 & 2417.8 & 3699.1 & 10.5 \\
4500 & 9000& 90 & 90 & 2760.6 & 2725.8 & 5568.9 & 11.4 \\
5000 & 10000 & 100 & 100 & 3284.9 &3008.6 & 7813.6 & 12.5 \\
\hline
\end{tabular}
\end{table}

\begin{table}[t!]
\caption{PG and NPG methods for least squares loss over sparse simplex}
\centering
\label{res-simplex}
\begin{tabular}{|rr||cc||cc||rr|}
\hline
\multicolumn{2}{|c||}{Problem} &  \multicolumn{2}{c||}{Solution Cardinality} & 
\multicolumn{2}{c||}{Objective Value} &
 \multicolumn{2}{c|}{CPU Time} \\
\multicolumn{1}{|c}{m} & \multicolumn{1}{c||}{n} 
& \multicolumn{1}{c}{\sc PG} & \multicolumn{1}{c||}{\sc NPG} 
& \multicolumn{1}{c}{\sc PG} & \multicolumn{1}{c||}{\sc NPG} 
& \multicolumn{1}{c}{\sc PG} &  \multicolumn{1}{c|}{\sc NPG} \\
\hline
100 & 500 & 5 & 5 & 202.8 & 108.3 & 0.06 & 0.07 \\
200 & 1000 & 10 & 10 & 400.6 & 151.4 & 0.08 & 0.10 \\
300 & 1500 & 15 & 15 & 556.9 & 226.6 & 0.10 & 0.13 \\
400 & 2000 & 20 & 20 & 774.0 & 336.2 & 0.25 & 0.27 \\
500 & 2500 & 25 & 25 & 1020.2 & 382.8 & 0.36 & 0.44 \\
600 & 3000 & 30 & 30 & 1175.4 & 426.9 & 0.48 & 0.77 \\
700 & 3500 & 35 & 35 & 1311.6 & 534.0 & 0.59 & 0.81 \\
800 & 4000 & 40 & 40 & 1535.3 & 587.0 & 0.86 & 1.52 \\
900 & 4500 & 45 & 45 & 1777.2 & 670.6 & 1.21 & 1.76 \\
1000 & 5000 & 50 & 50 &1961.5 & 772.0 & 1.25 & 2.24 \\
\hline
\end{tabular}
\end{table}

In the last experiment we compare the performance of NPG and PG for solving problem \eqref{sparse-prob} with a least squares loss $f$ defined in \eqref{lsq}, $s=0.01n$, and $\Omega=\Delta_+$, where $\Delta_+$ is the $n$-dimensional 
nonnegative simplex defined in Corollary \ref{special-sets}.  The 
associated problem data $A$ and $b$ are randomly generated as follows. We first randomly generate an orthonormal matrix $\bar A$ in the same manner as described in the first experiment above. Then 
we obtain $A$ by pre-multiplying $\bar A$ by the diagonal matrix $D$ whose $i$th diagonal entry is $i^2$ for $i=1,\ldots, n$.  In addition, we generate a  vector $z\in \Re^n$ whose 
entries are randomly chosen according to the uniform distribution in $[0,1]$,  and set $b=Az/\|z\|_1$.  

We choose $x^0=(\sum^s_{i=1} \be_i)/s$ as an initial point for both methods, and set $M=3$, $N=4$, $\q=3$ for NPG. The results of NPG and PG for those instances are presented in Table \ref{res-simplex}.  We observe that NPG is comparable to PG  in terms of CPU time, but it is significantly superior to PG in objective value.

\section{Concluding remarks}
\label{conclude}

In this paper we considered the problem of minimizing a Lipschitz differentiable function over a class of sparse 
symmetric sets. In particular we introduced a new optimality condition that is proved to be stronger than the $L$-stationarity optimality condition introduced in \cite{BeHa14}. We also proposed a nonmonotone projected gradient 
(NPG) method for solving this problem by  incorporating some support-changing and coordintate-swapping 
strategies into a projected gradient with variable stepsizes. It was shown that any accumulation point of NPG 
satisfies the new optimality condition. The classical projected gradient (PG) method with a constant stepsize, however, generally does not possess such a property. 

It is not hard to observe that a similar optimality condition as the one stated in Theorem \ref{opt-cond-prop1} 
can be derived for the problem 
\beq \label{general-prob}
\min\{f(x): x \in \cX\},
\eeq
where $\cX$ is closed but possibly nonconvex and $f$ satisfies \eqref{lipschitz}. That is, for any optimal solution  $x^*$ of \eqref{general-prob}, there holds
\[
x^*= \proj_{\cX}(x^*-t\nabla f(x^*)) \quad\quad \forall t\in[0,1/L_f).
\]
It can be easily shown that any accumulation point $x^*$ of the sequence generated by the classical 
PG method with a constant stepsize $\T \in (0, 1/L_f)$ satisfies 
\[
x^*\in \proj_{\cX}(x^*-t\nabla f(x^*)) \quad\quad \forall t\in[0,\T].
\]
This paper may shed a light on developing a gradient-type method  for which any accumulation 
point $x^*$ of the generated sequence satisfies a stronger relation: 
 \[
x^*= \proj_{\cX}(x^*-t\nabla f(x^*)) \quad\quad \forall t\in[0,\T].
\]


\begin{thebibliography}{9}
\bibitem{BaRaBo13}
S. Bahmani, B. Raj, and P.T. Boufounos.
\newblock Greedy sparsity-constrained optimization. 
\newblock {\em J. Mach. Learn. Res.},  14(1):807--841, 2013.

\bibitem{BaBo88} J. Barzilai and J. Borwein. Two-point step size gradient methods. \newblock{\em IMA J. Numer. Anal.} 8(1): 141--148, 1988.

\bibitem{BeHa14}
A. Beck and N. Hallak. 
On the minimization over sparse symmetric sets: projections, optimality conditions and algorithms. 
\newblock Accepted by {\em Mathematics of Operations Research}, 2015. 

\bibitem{BiMaRa00} 
E. G. Birgin, J. M. Martinez and J. A. Raydan. 
\newblock Nonmonotone spectral projected gradient methods on convex sets. 
\newblock{\em SIAM J. Optimiz.} 10(4): 1196--1211, 2000.

\bibitem{BlDa08}
T.~Blumensath and M.~E.~Davies. 
\newblock Iterative thresholding for sparse approximations. 
\newblock {\em J. FOURIER Anal. Appl.}, 14:629--654, 2008.

\bibitem{BlDa09}
T.~Blumensath and M.~E.~Davies.  
\newblock Iterative hard thresholding for compressed sensing. 
\newblock {\em Appl. Comput. Harmon. Anal.}, 27(3):265--274, 2009.

\bibitem{CanRom05}
E.~Cand\`{e}s and J.~Romberg.
\newblock {$\ell_1$}-magic : Recovery of sparse signals via convex programming.
\newblock User's guide, Applied {$\&$} Computational Mathematics, California
  Institute of Technology, Pasadena, CA 91125, USA, October 2005.
\newblock Available at www.l1-magic.org.

\bibitem{CanTao05}
E.~Cand\`{e}s and T.~Tao.
\newblock Decoding by linear programming.
\newblock {\em IEEE T. Inform. Theory}, 51(12):4203--4215,
  2005.
 
\bibitem{CanTao07}
E.~Cand\`{e}s and T.~Tao.
\newblock The Dantzig selector: statistical estimation when p is much smaller
  than n.
\newblock {\em Annals of Statistics}, 35(6):2313--2351, 2007.

\bibitem{ChDoSa98}
S.~Chen, D.~Donoho and M.~Saunders.
\newblock Atomic decomposition by basis pursuit.
\newblock {\em SIAM J. Sci. Comput.}, 20:33-61, 1998.

\bibitem{DaDuElKu11}
M.A. Davenport, M.F. Duarte, Y.C. Eldar, and G. Kutyniok. 
\newblock Introduction to compressed sensing. 
\newblock Preprint, 1--68, 2011.

\bibitem{FeLuRo96}
M. C. Ferris, S. Lucidi, and M. Roma. 
\newblock Nonmonotone curvilinear line search methods for unconstrained 
optimization.
\newblock {\em Computational Optimization and Applications}, 6(2): 117--136, 
1996.

\bibitem{GrLu86}
L. Grippo, F. Lampariello, and S. Lucidi.
\newblock A Nonmonotone Line Search Technique for Newton's Method. 
\newblock {\em SIAM Journal on Numerical Analysis}, 23(4): 707--716, 1986.

\bibitem{KoKiBo07}
K.~Koh, S.~J.~Kim and S.~Boyd.
\newblock An interior-point method for large-scale $l_1$-regularized logistic regression.
\newblock {\em J. Mach. Learn. Res.}, 8:1519--1555, 2007.

\bibitem{KyBeCeKo13}
A. Kyrillidis, S. Becker, V. Cevher, and C. Koch. 
\newblock Sparse projections onto the simplex.
\newblock  arXiv preprint arXiv:1206.1529 28, 2013. 

\bibitem{NeTr09}
D. Needell and J.A. Tropp. 
\newblock CoSaMP: Iterative signal recovery from incomplete and inaccurate samples. 
\newblock {\em Appl. Comput. Harmon. Anal.} 26(3):301--321, 2009.

\bibitem{TaNiGoKa12}
A. Takeda, M. Niranjan, J. Gotoh, and Y. Kawahara. 
\newblock Simultaneous pursuit of out-of-sample performance and sparsity in index tracking portfolios.
\newblock {\em Computational Management Science} 10(1): 21--49, 2012.

\bibitem{Ti96}
R.~Tibshirani.
\newblock Regression shrinkage and selection via the lasso.
\newblock {\em J. Roy. Stat. Soc. B} 58(1):267-288, 1996.

\bibitem{TrWr10}
J. A. Tropp and S. J. Wright. 
\newblock Computational Methods for Sparse Solution of Linear Inverse Problems. 
\newblock {\em Proc. IEEE} 98(6):948--958, 2010.

\bibitem{LuZh13} 
Z. Lu and Y. Zhang. 
\newblock Sparse approximation via penalty decomposition methods.  
\newblock {\em SIAM J. Optim.}, 23(4):2448--2478, 2013.

\bibitem{Lu14} 
Z. Lu. 
\newblock Iterative hard thresholding methods for $l_0$ regularized convex cone programming.  \newblock {\em Math. Program.}, 147(1-2): 277--307, 2014.

\bibitem{Rus06}
A.~Ruszczy\'nski.
\newblock {\em Nonlinear Optimization}.
\newblock Princeton University Press, 2006.

\bibitem{WrNoFi09}
S.~J.~Wright, R.~Nowak, and M.~Figueiredo.
\newblock Sparse reconstruction by separable approximation.
 \newblock {\em EEE Trans. Signal Process.} 57(7): 2479--2493, 2009.

\bibitem{XuLuXu15}
F.~Xu, Z.~Lu, and Z.~Xu.
\newblock An efficient optimization approach for a cardinality constrained Index tracking problem. 
\newblock {\em Optim. Method Softw.}, DOI: 10.1080/10556788.2015.1062891, 2015.

\bibitem{ZhHa04}
H. Zhang and W. Hager.
\newblock A nonmonotone line search technique and its application to unconstrained 
optimization.
\newblock  {\em SIAM Journal on Optimization} 14(4):1043--1056, 2004.
\end{thebibliography}
\end{document}